\newtheorem{thm}{Theorem}[section]
\newtheorem{cor}[thm]{Corollary}
\newtheorem{lem}[thm]{Lemma}
\newtheorem{prop}[thm]{Proposition}
\theoremstyle{definition}
\newtheorem{rem}[thm]{Remark}
\numberwithin{equation}{section}
\newcommand{\ZZ}{\mathbb Z}
\newcommand{\PP}{\mathbb P}
\newcommand{\FF}{\mathbb F}
\newcommand{\lra}{\longrightarrow}
\newcommand{\ra}{\rightarrow}
\newcommand{\cA}{\mathcal{A}}
\newcommand{\cJ}{\mathcal{J}}
\newcommand{\tC}{\widetilde{C}}
\newcommand{\tg}{\Tilde{\gamma}}
\newcommand{\cM}{\mathcal{M}}
\newcommand{\cO}{\mathcal{O}}
\newcommand{\cR}{\mathcal{R}}
\newcommand{\cRH}{\mathcal{RH}}
\DeclareMathOperator{\Aut}{{Aut}}
 \DeclareMathOperator{\Ker}{ker}
  \DeclareMathOperator{\Fix}{Fix}
\DeclareMathOperator{\Pic}{Pic}
 \DeclareMathOperator{\Nm}{{Nm}}
\DeclareMathOperator{\Spec}{Spec}
 \DeclareMathOperator{\im}{Im}
 \DeclareMathOperator{\Div}{{Div}}
\DeclareMathOperator{\id}{id}
\newcommand{\s}{\sigma}
\renewcommand{\t}{\tau}
\newcommand{\ita}{\iota\tau}
\newcommand{\e}{\eta}
\newcommand{\x}{\xi}
\newcommand{\al}{\alpha}
\title{Involutions on hyperelliptic curves and Prym maps}
\author{Pawe\l{} Bor\'owka, Angela Ortega}
\address{P. Bor\'owka\\ Institute of Mathematics, Jagiellonian University in Krak\'ow\\ ul. prof. Stanisława Łojasiewicza 6, 
30-348 Kraków, Poland}
\email{pawel.borowka@uj.edu.pl}
\address{A. Ortega \\ Institut f\" ur Mathematik, Humboldt Universit\"at zu Berlin \\ Unter den Linden 6, 10099 Berlin, Germany}
\email{ortega@math.hu-berlin.de}
\subjclass{14H10, 14H30, 14H40}
\keywords{Prym variety, Prym map, coverings of curves}
\begin{document}
\begin{abstract}
    We investigate the geometry of smooth hyperelliptic curves that possess additional involutions, especially from the point of view of the Prym theory. Our main result is the injectivity of the Prym map for hyperelliptic $\ZZ_2^2$-coverings over hyperelliptic curves of positive genus.   
\end{abstract} 
\maketitle

\section{Introduction}
A smooth complex hyperelliptic curve $C$ is a Riemann surface of genus $g>1$ that is a double covering of the Riemann sphere $\PP^1$. Having such a map makes hyperelliptic curves distinguishable and more accessible in many aspects since, for example, they can be described by an equation of the form $y^2=F(x)$ and in this way one can see an hyperelliptic curve as a subvariety of a weighted projective plane.

 A covering $f:C'\ra C$ will be called {\it hyperelliptic} if {\it both} curves are hyperelliptic. Such assumption is actually quite strong: if $f$ is cyclic and unbranched then 
 $\deg(f)=2$ (see \cite{O},\cite{Ries}). If $f$ is a hyperelliptic double covering, then the number of branch points has to be at most $4$ and there are constrains on a line bundle that defines the covering (see Section \ref{sec:higher} for details). On the other hand, surprisingly, a non-Galois \'etale triple covering of a genus 2 curve is hyperelliptic (see \cite{LOtriple}).

The Prym theory investigates the (connected component of) kernel of the norm map $\Nm_f:JC'\ra JC$ that can also be seen as a complementary abelian subvariety to the image of Jacobian
$f^*(JC)$ inside $JC'$ and is called the Prym variety of the covering. One can then consider the Prym map that assigns  to a covering its Prym variety.

The Prym map restricted to the locus of hyperelliptic double coverings is never injective (see the bigonal construction, \cite{NO}, or Corollaries \ref{never2} and \ref{never}). Motivated by this fact, we investigate the Prym map of hyperelliptic Klein coverings, i.e. $4:1$ Galois coverings with Galois group isomorphic to the Klein group $\ZZ_2^2$
and both curves are hyperelliptic.
In \cite{BOklein} we have shown the injectivity of 
the Prym map for the special case of  \'etale coverings 
over a  genus 2 curve. Now, we are able to show the injectivity of the hyperelliptic Prym map in full generality (any genus and including ramified coverings). We show
in Theorems \ref{injectiveKlein}, \ref{mixed_Prym}, \ref{brached_inj_Prym} and \ref{mixed4g+3} the following: 
\bigskip
\begin{thm}
Let  $\mathcal{RH}_{g,b}$ be the moduli space of hyperelliptic  Klein coverings  over a curve of genus $g>1$ which are simply ramified in $b$ points. We also include the cases $g=1,b=8$ and $g=1, b=12$. Then the corresponding Prym maps on  $\mathcal{RH}_{g,b}$
for $b \in \{ 0,4, 8, 12\} $ are (globally) injective. 
\end{thm}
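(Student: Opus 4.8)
\emph{The plan} is to split the statement according to the four admissible values of $b$, each of which corresponds to a geometrically distinct type of hyperelliptic Klein covering, and to reduce every case to a reconstruction of branch data on $\PP^1$ followed by an application of the Torelli theorem for hyperelliptic curves. Let me first describe the mechanism common to all four cases, and then indicate where they diverge.

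Let $f\colon C'\ra C$ be a hyperelliptic Klein covering with Galois group $V\cong\ZZ_2^2=\{1,\s_1,\s_2,\s_3\}$, and denote by $C_i=C'/\langle\s_i\rangle$ the three intermediate double covers of $C$. Decomposing the rational group algebra $\QQ[V]$ into its four characters yields an isogeny
$$JC'\sim f^*JC\times P_1\times P_2\times P_3,\qquad P_i=\Prym(C_i/C),$$
where $P_i$ is the image of the idempotent attached to the nontrivial character that is trivial on $\s_i$. Consequently the Prym variety $P(C'/C)=(\ker\Nm_f)^0$ is isogenous to $P_1\times P_2\times P_3$, and the Prym map records this abelian variety together with the polarization induced from the principal polarization of $JC'$. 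The first step is therefore to pin down this polarization type precisely from the ramification data.

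Next I would exploit hyperellipticity. Since the hyperelliptic involution of $C'$ is central it commutes with $V$, and (as $g\ge 1$ forbids a rational intermediate quotient) it does not lie in $V$; together with $V$ it generates a group $\cong\ZZ_2^3$ whose quotient is $\PP^1$. Thus $C'\ra\PP^1$ is a $\ZZ_2^3$-Galois covering, and the entire tower is encoded by a configuration of points on $\PP^1$, each labelled by its inertia element in $\ZZ_2^3$, taken modulo $\mathrm{PGL}_2$. This labelled configuration is exactly the moduli point in $\cRH_{g,b}$, so \emph{reconstructing the covering from its Prym is equivalent to reconstructing the labelled configuration}, and the value of $b$ fixes its combinatorial type. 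For each double cover $C_i\ra C$ the factor $P_i$ is, by the classical description of hyperelliptic Pryms, a hyperelliptic Jacobian (or a product of such) whose branch points form an explicit subset of the configuration; recovering these factors as polarized abelian varieties and then invoking hyperelliptic Torelli recovers the subsets themselves, while controlling the gluing of the complementary subvarieties recovers how they sit inside the common $\PP^1$.

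\emph{The hard part} will be the ramified cases $b\in\{4,8,12\}$, where the restricted polarization on $P(C'/C)$ is non-principal, of a type depending on $b$, so that a direct Prym--Torelli argument is unavailable: one must keep careful track of the polarization type and of the finite kernels of the isogeny above in order to recover the individual factors \emph{and} their gluing, and must treat separately the low-genus boundary configurations $g=1,\ b\in\{8,12\}$ in which some of the $P_i$ degenerate or coincide. Carrying this out case by case --- the \'etale case $b=0$, the mixed ramified cases $b=4$ and $b=8$, and the case $b=12$ together with the special $g=1$ configurations --- establishes the four injectivity statements (Theorems \ref{injectiveKlein}, \ref{mixed_Prym}, \ref{brached_inj_Prym} and \ref{mixed4g+3}), and combining them gives global injectivity of the Prym map on $\cRH_{g,b}$ for all $b\in\{0,4,8,12\}$.
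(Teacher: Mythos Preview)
Your outline captures the broad strategy the paper follows --- decompose the Prym into three pieces indexed by the nontrivial characters of $V$, recover each piece as (something close to) a hyperelliptic Jacobian, and then reconstruct the labelled branch configuration on $\PP^1$ from how the pieces glue. But there is a genuine gap in treating the four values of $b$ uniformly, and your sentence ``$P_i$ is a hyperelliptic Jacobian'' hides exactly where the difficulty lies.

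For $b\in\{0,8\}$ the covering curve has genus $4g-3$, and the paper isolates the summands by a single intrinsic move: set $G_P=\ker(\phi_\Xi)\cap P[2]$ and show that $P/G_P$ is, as a principally polarised abelian variety, a product of hyperelliptic Jacobians (Lemmas \ref{multtwo}, \ref{multtwo2}). Hyperelliptic Torelli then applies, and the common image of Weierstrass points inside $G_P$ (Proposition \ref{gluing}) recovers the configuration. This is essentially your plan, carried out.

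For $b\in\{4,12\}$ the covering curve has genus $4g+3$, and the three summands sitting inside $P$ are \emph{not} Jacobians but quotients $JH^*=JH/\langle\beta\rangle$ by a $2$-torsion point. Here the mechanism you propose fails: the paper notes explicitly (remark after Theorem \ref{brached_inj_Prym}) that $P\to P/G$ is not a polarised isogeny, so no single quotient of $P$ exhibits a product of principally polarised Jacobians. Instead the paper (i) proves finiteness of the Prym map over its image (Lemma \ref{finite}), (ii) shows that for very general $P$ the only proper abelian subvarieties are the three $JH^*_j$ (Lemma \ref{generic}), and (iii) recovers each $JH_j$ by quotienting $JH^*_j$ \emph{individually} by the triple intersection $G=JH^*_x\cap JH^*_y\cap JH^*_z$, whose order is computed from an explicit description of $2$-torsion in terms of Weierstrass points of $\tC$ (Lemmas \ref{descrip2tor}, \ref{lem58}). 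Generic injectivity plus finiteness then yield global injectivity. Your sketch neither anticipates this bifurcation nor the passage from the visible subvarieties $JH^*$ back to genuine Jacobians before Torelli can be invoked; without these steps the argument for $b\in\{4,12\}$ does not go through.
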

\bigskip

The proofs of these theorems are based on geometric characterizations of such coverings and the description of the 2-torsion points of the involved Jacobians in terms of the Weierstrass points. In all the cases we construct an explicit inverse of the Prym map.

It has been shown that the Prym map of double coverings branched in at least 6 points (hence not hyperelliptic) is globally injective (\cite{NO}). Since hyperelliptic coverings make the bound on the number of branched points sharp, one may believe that our result is an important step in showing global injectivity of Klein Prym maps (both \'etale and branched).

The paper is organised as follows: Section 2 contains the necessary basic facts about involutions on hyperelliptic curves, following the top-down perspective.  
In Section 3 we recall the constructions for double hyperelliptic coverings to see the bottom-up perspective. Having both perspectives gives us a possibility to show what kind of data is needed to set up a Klein covering construction.

In Section 4, we generalise results from \cite{BOklein}, i.e. we prove the injectivity of the Prym map for \'etale hyperelliptic Klein coverings of any genus and we also prove the so-called mixed case, i.e. coverings ramified in 8 points.

In Section 5, we show the injectivity of the Prym map for $\ZZ_2^2$ hyperelliptic coverings branched in 12 points and another mixed case, namely coverings branched in 4 points.
The Figures 1-4 appearing in this article have been produced using the software {\it Inkscape}.

\subsection*{Acknowledgements}
The first author has been supported by the Polish National Science Center project number 2019/35/D/ST1/02385.
The second author warmly thanks for the hospitality during her stay at the Jagiellonian University in Krak\'ow, where part of this work was carried out. Her visit in Krak\'ow was covered by International Cooperation Funding Programme at the Faculty of Mathematics and Computer Science of the Jagiellonian University under the Excellence Initiative at
the Jagiellonian University.

\section{Preliminaries}
In this section we describe the geometry of hyperelliptic curves  that contain at least one more involution and its corresponding covering from the top-down perspective. Some of the results can already be found in \cite{GS} that is devoted to hyperelliptic curves with extra involutions. 

We start by recalling some basic facts about involutions on hyperelliptic curves. Let $H$ be a hyperelliptic curve of genus $g(H)=g$.
For simplicity, the hyperelliptic involution will always be denoted by $\iota$ (or $\iota_H$ if it is important to remember the curve). Let $W=\{w_1,\ldots,w_{2g+2}\}$ denote the set of Weierstrass points, which is the same as the set of ramification points of the hyperelliptic covering.
The following propositions are well-known facts (see for example \cite{Sh}). 
\begin{prop}\label{prop:permute_Wpoints}
Let $H$ be a hyperelliptic curve and $\iota$ the hyperelliptic involution. Then $\iota$ commutes with any automorphism of $H$. Every automorphism of $H$ is a lift of an automorphism of $\PP^1$ and it restricts to a permutation of $W$. In particular, if $\ZZ_2^n\subset\Aut(H)$ then $n\leq 3$.
\end{prop}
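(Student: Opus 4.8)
The plan is to prove the three assertions in order, deriving the rank bound $n\le 3$ from the first two together with a statement about elementary abelian $2$-subgroups of $\Aut(\PP^1)=\mathrm{PGL}_2(\CC)$. The starting point, where the hypothesis $g>1$ enters, is the classical fact that a hyperelliptic curve of genus $g\ge 2$ carries a \emph{unique} $g^1_2$ (degree-two pencil). The hyperelliptic involution $\iota$ is intrinsically attached to this pencil, being the involution that interchanges the two points of each divisor of the pencil. Hence for any $\sigma\in\Aut(H)$ the pullback $\sigma^*$ sends the $g^1_2$ to a $g^1_2$, so to itself by uniqueness; conjugating $\iota$ by $\sigma$ therefore yields the involution attached to $\sigma^*(g^1_2)=g^1_2$, which is $\iota$ again. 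Thus $\sigma\iota\sigma^{-1}=\iota$, i.e. $\iota$ is central in $\Aut(H)$.

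Since $\sigma$ preserves the $g^1_2$, it permutes the fibers of the hyperelliptic map $\pi\colon H\to\PP^1=H/\langle\iota\rangle$, and so descends to a well-defined $\bar\sigma\in\Aut(\PP^1)$ with $\pi\sigma=\bar\sigma\pi$; this shows every automorphism is a lift of an automorphism of $\PP^1$. Moreover $W$ is exactly the fixed locus of $\iota$, so from $\sigma\iota=\iota\sigma$ we get, for any $w\in W$, that $\iota(\sigma w)=\sigma(\iota w)=\sigma w$, whence $\sigma(W)=W$; that is, $\sigma$ restricts to a permutation of $W$.

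For the final assertion I would package the above into the exact sequence $1\to\langle\iota\rangle\to\Aut(H)\xrightarrow{\ \rho\ }\mathrm{PGL}_2(\CC)$, where $\rho(\sigma)=\bar\sigma$ and $\ker\rho=\langle\iota\rangle$ is the deck group of the degree-two cover $\pi$ (the extension $\CC(H)/\CC(\PP^1)$ is Galois of order $2$). Given $G\cong\ZZ_2^n\subseteq\Aut(H)$, viewed as an $\FF_2$-vector space, the isomorphism $G/(G\cap\ker\rho)\cong\rho(G)$ gives $\dim_{\FF_2}G=\dim_{\FF_2}(G\cap\ker\rho)+\dim_{\FF_2}\rho(G)\le 1+\dim_{\FF_2}\rho(G)$, since $G\cap\ker\rho\subseteq\langle\iota\rangle$. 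It then suffices to bound the rank of the elementary abelian $2$-group $\rho(G)\subseteq\mathrm{PGL}_2(\CC)$, and the key point is that $\mathrm{PGL}_2(\CC)$ contains no copy of $\ZZ_2^3$. I would verify this by a fixed-point computation: normalizing one involution to $\tau\colon z\mapsto -z$ (fixed points $0,\infty$), its centralizer consists of the maps $z\mapsto\lambda z$ and $z\mapsto\mu/z$; the involutions among these are $\tau$ and the maps $\sigma_\mu\colon z\mapsto\mu/z$, and two of the latter commute only when they differ by $\tau$ (indeed $\sigma_{-\mu}=\tau\sigma_\mu$). Hence any commuting family of involutions lies in some $\langle\tau,\sigma_\mu\rangle\cong\ZZ_2^2$, so $\dim_{\FF_2}\rho(G)\le 2$, giving $n\le 3$. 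Alternatively one simply invokes the classification of finite subgroups of $\mathrm{PGL}_2(\CC)$ as cyclic, dihedral, $A_4$, $S_4$, or $A_5$, none of which contains $\ZZ_2^3$.

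The only genuinely nonformal inputs are the uniqueness of the $g^1_2$ (hence of $\iota$), which powers centrality, and the rank-$2$ bound on elementary abelian $2$-subgroups of $\mathrm{PGL}_2(\CC)$; both are classical. I expect the latter to be the technical heart of the bound $n\le 3$, though the centralizer computation sketched above makes it elementary and self-contained.
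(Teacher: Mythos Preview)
Your argument is correct and complete. The paper itself does not supply a proof of this proposition; it simply records it as a well-known fact and refers the reader to \cite{Sh}. What you have written is the standard justification: centrality of $\iota$ via uniqueness of the $g^1_2$, descent to $\PP^1$ via the resulting exact sequence $1\to\langle\iota\rangle\to\Aut(H)\to\mathrm{PGL}_2(\CC)$, and the rank bound from the structure of elementary abelian $2$-subgroups of $\mathrm{PGL}_2(\CC)$. Your explicit centralizer computation for $\tau\colon z\mapsto -z$ is a clean way to avoid invoking the full classification of finite subgroups of $\mathrm{PGL}_2(\CC)$, though either route works.
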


\begin{prop}\label{prop:involutions}
Let $\t\in\Aut(H)$ be a (non-hyperelliptic) involution on $H$. By $H_\t=H/\t$ we denote the quotient curve. If $g(H)=2k$ then both $\t$ and $\ita$ have exactly 2 fixed points and $g(H_\t)=g(H_{\ita})=k$. If $g(H)=2k+1$ then either $\t$ is fixed point free and $\ita$ has 4 fixed points or $\t$ has 4 fixed points and $\ita$ is fixed point free.
\end{prop}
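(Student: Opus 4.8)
The plan is to exploit the fact that $\iota$ commutes with $\t$ (Proposition \ref{prop:permute_Wpoints}), so that $\ita$ is again an involution and $\langle\iota,\t\rangle\cong\ZZ_2^2$ acts on $H$. First I would use the standard fact that the stabilizer of a point under a finite group acting on a compact Riemann surface is cyclic (the derivative gives a faithful character into $\CC^*$); since $\ZZ_2^2$ is not cyclic, no point of $H$ can be fixed by two of the three involutions $\iota,\t,\ita$ at once, so their fixed loci are pairwise disjoint. In particular, as $\Fix(\iota)=W$, neither $\t$ nor $\ita$ fixes a Weierstrass point, and each therefore acts on the $2g+2$ points of $W$ as a fixed-point-free involution, pairing them into $g+1$ pairs.

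Next I would descend to the quotient $\pi\colon H\to H/\iota=\PP^1$. Since $\t\neq\id,\iota$, the induced map $\bar\t$ is a nontrivial involution of $\PP^1$ and hence has exactly two fixed points $q_1,q_2$. Because $\pi$ is equivariant ($\pi\t=\bar\t\pi$) and restricts to a bijection from $W$ onto the branch locus $B\subset\PP^1$, while $\t$ permutes $W$ without fixed points, $\bar\t$ permutes $B$ without fixed points; thus $q_1,q_2\notin B$ and each fibre $\pi^{-1}(q_i)=\{p_i,\iota p_i\}$ is unramified. Over each $q_i$ the pair $\{p_i,\iota p_i\}$ is preserved by $\t$, so $\t$ either fixes both points or interchanges them, and correspondingly $\ita$ interchanges or fixes both. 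Hence over each $q_i$ exactly one of $\t,\ita$ contributes two fixed points and the other none, giving
\[
|\Fix(\t)|+|\Fix(\ita)|=4,\qquad |\Fix(\t)|,\,|\Fix(\ita)|\in\{0,2,4\}.
\]

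Finally I would combine this with Riemann--Hurwitz: an involution $\sigma$ on $H$ satisfies $|\Fix(\sigma)|=2g+2-4\,g(H_\sigma)$, so $|\Fix(\t)|\equiv 2(g+1)\pmod 4$. If $g=2k$ this forces $|\Fix(\t)|=2$, whence $|\Fix(\ita)|=2$ as well, and solving for the quotient genera yields $g(H_\t)=g(H_{\ita})=k$. If $g=2k+1$ it forces $|\Fix(\t)|\in\{0,4\}$, so together with $|\Fix(\t)|+|\Fix(\ita)|=4$ only the two possibilities $\{|\Fix(\t)|,|\Fix(\ita)|\}=\{0,4\}$ remain, i.e. one of $\t,\ita$ is fixed-point free and the other has four fixed points. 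I expect the main obstacle to be the local bookkeeping of the second step---proving cleanly that the two fixed points of $\bar\t$ avoid the branch locus and that $\t$ and $\ita$ exchange the ``fix both / swap'' roles over each fibre---since once the count $|\Fix(\t)|+|\Fix(\ita)|=4$ is secured, the parity argument via Riemann--Hurwitz is routine.
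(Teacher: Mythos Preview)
The paper does not give a proof of this proposition; it simply states it as a well-known fact, referring to \cite{Sh}. Your argument is correct and self-contained: the cyclicity of point-stabilizers in a finite group acting holomorphically on a Riemann surface forces $\Fix(\iota)$, $\Fix(\t)$, $\Fix(\ita)$ to be pairwise disjoint; the descent to $\PP^1$ then localizes all fixed points of $\t$ and $\ita$ over the two fixed points of $\bar\t$ (which, as you check, avoid the branch locus), and the dichotomy ``fix both versus swap'' in each fibre gives $|\Fix(\t)|+|\Fix(\ita)|=4$ with each summand in $\{0,2,4\}$. The Riemann--Hurwitz parity $|\Fix(\t)|\equiv 2g+2\pmod 4$ then pins down the cases exactly as stated. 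There is no gap; the local bookkeeping you flag as the possible sticking point is handled cleanly by the observation that $\overline{\ita}=\bar\t$ (since $\iota$ acts trivially on $\PP^1$), so every fixed point of either $\t$ or $\ita$ lies over $q_1$ or $q_2$.
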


In order to make statements easier and more compact we abuse the notation by saying that a genus 1 curve with a chosen double covering of $\PP^1$ is called hyperelliptic.

\begin{cor}\label{hypquot}
With the notation from Proposition \ref{prop:involutions}, the curves $H_\t$ and $H_{\ita}$ are hyperelliptic whose  hyperelliptic involution lifts to the involution $\iota$ on $H$.
\end{cor}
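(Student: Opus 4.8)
The plan is to exploit the fact, recorded in Proposition~\ref{prop:permute_Wpoints}, that $\iota$ commutes with every automorphism of $H$. Consequently $\iota$, $\t$ and $\ita=\iota\t$ generate a copy of $\ZZ_2^2$ acting on $H$, and the hyperelliptic structures on the quotient curves can be read off by comparing the two ways of forming the double quotient $H/\langle\iota,\t\rangle$.

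First I would observe that, since $\iota\t=\t\iota$, conjugation by $\iota$ preserves $\langle\t\rangle$, so $\iota$ descends to an automorphism $\bar\iota$ of $H_\t=H/\t$. Because $\iota\notin\langle\t\rangle$ (the hyperelliptic involution is not equal to $\t$), the induced $\bar\iota$ is a nontrivial involution. The same computation, using that $\iota$ also commutes with $\ita$, produces a nontrivial involution on $H_{\ita}$, so it suffices to treat $H_\t$ and then repeat the argument verbatim.

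Next I would compute $(H_\t)/\bar\iota$. By construction this equals the full quotient $H/\langle\iota,\t\rangle$, and forming that quotient in the other order gives $(H/\iota)/\bar\t$, where $\bar\t$ is the involution induced on $H/\iota\cong\PP^1$ by $\t$. Since every involution of $\PP^1$ has quotient $\PP^1$, I obtain $(H_\t)/\bar\iota\cong\PP^1$. Thus $\bar\iota$ exhibits $H_\t$ as a double cover of $\PP^1$; invoking Proposition~\ref{prop:involutions} together with the genus of $H$, the curve $H_\t$ has positive genus, so this double cover is exactly a hyperelliptic structure in the sense adopted above. By construction its hyperelliptic involution $\bar\iota$ lifts to $\iota$ on $H$, and the identical argument with $\t$ replaced by $\ita$ finishes the proof.

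I do not expect a genuine obstacle, as the statement is essentially a bookkeeping consequence of commutativity. The only points requiring care are verifying that the descended involution is nontrivial, so that the map to $\PP^1$ has degree $2$ rather than $1$, and appealing to Proposition~\ref{prop:involutions} to exclude the genus-$0$ possibility, after which the convention fixed above covers the remaining genus-$1$ instances.
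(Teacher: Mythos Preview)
Your argument is correct. The paper states this corollary without proof, treating it as an immediate consequence of Propositions~\ref{prop:permute_Wpoints} and~\ref{prop:involutions}; your verification via the $\ZZ_2^2$-action of $\langle\iota,\t\rangle$ and the double-quotient identification $(H_\t)/\bar\iota\cong(H/\iota)/\bar\t\cong\PP^1$ is exactly the intended unpacking.
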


 Assume there are two involutions $\s,\t\in \Aut(H)$ such that $\s\t=\t\s$, (i.e., $\langle \s,\t \rangle \simeq \ZZ_2^2$). In such a case, the covering $H\to H/\langle \s,\t\rangle$ will be Galois with the deck group isomorphic to the Klein four-group.
Since we are interested in Prym maps, we make another natural assumption, namely $\iota\notin \langle \s,\t\rangle$, hence $g(H/\langle \s,\t\rangle)>0$. The groups satisfying both conditions will be called {\it Klein subgroups}.

We start by excluding the case when the genus of the curve is even, using the following fact.
\begin{lem}
Let $H$ be a hyperelliptic curve of genus $g(H)=2k$. Then, there does not exist a Klein subgroup $\langle \s,\t \rangle \subset\Aut(H)$. 
\end{lem}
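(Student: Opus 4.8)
The plan is to derive a contradiction by counting fixed points and applying the parity constraints from Proposition \ref{prop:involutions} to all three nontrivial involutions in the Klein subgroup. First I would suppose, for contradiction, that $\langle\s,\t\rangle$ is a Klein subgroup of $\Aut(H)$ with $g(H)=2k$. The key observation is that by Proposition \ref{prop:permute_Wpoints} the hyperelliptic involution $\iota$ commutes with everything, so together with $\s,\t$ it generates a larger $2$-elementary abelian group. Since $\iota\notin\langle\s,\t\rangle$ by the definition of a Klein subgroup, the group $\langle\s,\t,\iota\rangle$ is isomorphic to $\ZZ_2^3$, which is consistent with the bound $n\le 3$ from Proposition \ref{prop:permute_Wpoints} but will turn out to be exactly what forces the contradiction.

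The heart of the argument is to look at the three non-hyperelliptic involutions $\s$, $\t$, and $\s\t$. Because $g(H)=2k$ is even, Proposition \ref{prop:involutions} applies to \emph{each} of them: any non-hyperelliptic involution on an even-genus hyperelliptic curve has exactly $2$ fixed points (and its product with $\iota$ also has exactly $2$). So I would record that each of $\s,\t,\s\t$ has exactly two fixed points on $H$. Next I would analyze how these fixed points interact. A fixed point $p$ of $\s$ is a Weierstrass point precisely when $\iota p=p$; more importantly, I want to track the images of the fixed-point sets in $\PP^1$ under the hyperelliptic map, using Proposition \ref{prop:permute_Wpoints} which says every automorphism descends to an automorphism of $\PP^1$ permuting the branch locus $\bar W$ of $2g+2=4k+2$ points.

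The main obstacle, and the crux of the proof, is to extract a numerical contradiction from this fixed-point data, and I expect the cleanest route to go through the quotient curve and a Riemann--Hurwitz computation. Consider the descent to $\PP^1=H/\iota$: the involutions $\s,\t,\s\t$ induce involutions $\bar\s,\bar\t,\overline{\s\t}$ on $\PP^1$, each of which is a nontrivial automorphism of $\PP^1$ and hence has exactly $2$ fixed points there. Since $\langle\bar\s,\bar\t\rangle\cong\ZZ_2^2$ acts faithfully on $\PP^1$ (faithfulness because $\iota\notin\langle\s,\t\rangle$), I would use the standard fact that a Klein four-group acting on $\PP^1$ has the three nontrivial elements pairing up six fixed points into three disjoint pairs, i.e. the fixed loci of $\bar\s,\bar\t,\overline{\s\t}$ are mutually disjoint. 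The contradiction should then emerge from comparing, for one of these involutions, say $\t$, the count of its two upstairs fixed points against how the two downstairs fixed points on $\PP^1$ lift: each downstairs fixed point of $\bar\t$ is covered by a branch point (an $\iota$-fixed Weierstrass point) or by an $\iota$-orbit swapped or fixed by $\t$, and a careful parity bookkeeping of whether these fixed points of $\bar\t$ lie in the branch locus $\bar W$ will be incompatible with $4k+2\equiv 2\pmod 4$ and the requirement that $\bar W$ be $\langle\bar\s,\bar\t\rangle$-invariant.

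Concretely, I would compute the genus of $H/\langle\s,\t\rangle$ via Riemann--Hurwitz for the degree-$4$ map $H\to H/\langle\s,\t\rangle$, or equivalently stratify $\bar W$ (the $4k+2$ branch points on $\PP^1$) by their stabilizers in $\langle\bar\s,\bar\t\rangle$; invariance forces $\bar W$ to be a union of free orbits (size $4$) together with points fixed by exactly one nontrivial element. Since each $\bar\s,\bar\t,\overline{\s\t}$ has only two fixed points on $\PP^1$ and these six are distinct, at most six of the $4k+2$ Weierstrass images can be non-free, and the non-free ones must come in the right pattern to make each of $\s,\t,\s\t$ have exactly two upstairs fixed points. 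Working out this pattern, the total $4k+2$ reduces mod $4$ to an obstruction: the number of Weierstrass points fixed (as a set, pointwise under $\iota$) and permuted by $\t$ cannot be reconciled with $2g+2\equiv 2\pmod 4$ while simultaneously giving $\s$, $\t$, $\s\t$ each exactly two fixed points. That incompatibility is the contradiction, so no Klein subgroup exists in even genus. The delicate part is the exact orbit-counting on $\bar W$; everything else is a direct application of the two cited propositions.
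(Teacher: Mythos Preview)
Your setup is sound: descending to $\PP^1$, using that $\langle\bar\s,\bar\t\rangle$ acts faithfully with six pairwise disjoint fixed points, and stratifying the branch image $\bar W$ by stabilizer is exactly the right framework. The problem is that the bookkeeping you describe does \emph{not} produce a contradiction from Propositions~\ref{prop:permute_Wpoints} and~\ref{prop:involutions} alone. If $n\in\{0,1,2,3\}$ denotes the number of involutions $\alpha\in\{\s,\t,\s\t\}$ whose two fixed points on $H$ happen to be Weierstrass points, then your Case~(B) (exactly one image in $\bar W$) is indeed excluded by $|\Fix(\alpha)|=|\Fix(\iota\alpha)|=2$, but Cases~(A) and~(C) are both allowed, and one gets $|\bar W|=4a+2n$. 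So $4k+2=4a+2n$ only forces $n$ odd; the configurations $n=1$ and $n=3$ are perfectly consistent with both cited propositions. Likewise the Riemann--Hurwitz (or Accola) computation for $H\to H/\langle\s,\t\rangle$ gives $g(H/\langle\s,\t\rangle)=k/2$, which is a contradiction only when $k$ is odd. For $g(H)\equiv 0\pmod 4$ your argument, as written, does not close.

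The missing ingredient is precisely what the paper's proof supplies. By Proposition~\ref{prop:Angela} (applied to the hyperelliptic double cover $H\to H/\alpha$, which is branched in two points because $g(H)$ is even), the branch locus on $H/\alpha$ must be an $\iota$-conjugate pair; consequently the two fixed points of $\alpha$ on $H$ are of the form $\{P,\iota P\}$ and are \emph{never} Weierstrass. This forces $n=0$ for all three involutions, so the $\langle\s,\t\rangle$-action on $W$ is free and $4\mid |W|=4k+2$ is the contradiction. In other words, your orbit-count becomes correct---and then collapses to the paper's two-line argument---once you import this one fact from Section~\ref{sec:higher}; without it the ``delicate part'' you flagged cannot be completed.
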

\begin{proof}
 If $g(H)$ is even, then $|W|=4k+2$, hence the action of the subgroup $\langle \s,\t\rangle \cong\ZZ_2\times\ZZ_2$ cannot be free on $W$. On the other hand, Proposition \ref{prop:Angela} and Figure \ref{fig:hyperelliptic_branched+2} show that the ramification points of any double covering cannot be Weierstrass, see also \cite[Lemma 1]{GS}. 
\end{proof}

Now, we are left with two cases, either $g(H)=4k+1$ or $g(H)=4k+3$. The next proposition is essentially rephrasing \cite[Lemma 2.13]{BOklein}.
\begin{prop}\label{prop:cominv}
Let $H$ be a hyperelliptic curve with the group of commuting involutions $\langle \iota,\s,\t\rangle \subset\Aut(H)$. Then
\begin{itemize}
    \item there exists a unique Klein subgroup of fixed point free involutions if and only if $g(H)=4k+1$. 
    \item there exists a unique Klein subgroup of involutions with fixed points if and only if $g(H)=4k+3$. 
\end{itemize}
\end{prop}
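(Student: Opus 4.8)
The plan is to regard $G=\langle\iota,\s,\t\rangle\cong\ZZ_2^3$ as a three–dimensional $\FF_2$–vector space and to extract the two bullets from three ingredients: the pairing of involutions by $\iota$, a short count of rank–two subspaces, and a single application of Riemann–Hurwitz that ties the fixed-point parity to $g\bmod 4$.

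First I would set up the bookkeeping. By Proposition \ref{prop:permute_Wpoints} the three involutions commute and, being maximal, $G\cong\ZZ_2^3$; the preceding lemma rules out even genus, so $g$ is odd and, by Proposition \ref{prop:involutions}, each of the six non-hyperelliptic involutions is either fixed-point free or has exactly four fixed points. Multiplication by $\iota$ partitions these six into the pairs $\{\s,\iota\s\}$, $\{\t,\iota\t\}$, $\{\s\t,\iota\s\t\}$, and in each pair exactly one member is fixed-point free while the other has four fixed points. A Klein subgroup is exactly a rank-two subspace of $G$ not containing $\iota$, and a direct count gives precisely four of them (the seven rank-two subspaces minus the three through $\iota$).

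Next comes the combinatorial core. Let $u,v,w$ be the fixed-point-free representatives of the three pairs, written additively as $u=\s+\delta_1\iota$, $v=\t+\delta_2\iota$, $w=\s\t+\delta_3\iota$ with $\delta_i\in\{0,1\}$. A Klein subgroup consists entirely of fixed-point-free involutions if and only if its three nonzero elements are exactly $u,v,w$, which (by closure) forces $u+v+w=0$; symmetrically it consists entirely of involutions with fixed points iff $u+v+w=\iota$. In either case the subgroup is then the unique subspace $\{0,u,v,w\}$ (resp. its $\iota$–translate). The key cancellation is that the $\s,\t$–parts vanish, giving $u+v+w=(\delta_1+\delta_2+\delta_3)\,\iota\in\langle\iota\rangle$ always. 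Hence exactly one of the two types is realised, and uniquely so; what remains is to decide which.

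To pin down the type I would apply Riemann–Hurwitz to the degree-four quotient $H\to Y:=H/\langle\s,\t\rangle$. Since $G/\langle\iota\rangle\cong\ZZ_2^2$ acts faithfully on $\PP^1=H/\iota$ and two distinct commuting involutions of $\PP^1$ have disjoint fixed loci, no point of $H$ is stabilised by all of $\langle\s,\t\rangle$; thus the ramification of $H\to Y$ comes only from the disjoint fixed-point sets of $\s,\t,\s\t$. Writing $\epsilon(\cdot)\in\{0,1\}$ for the indicator of having fixed points, this yields
\[ 2g-2=4\bigl(2g(Y)-2\bigr)+4\bigl(\epsilon(\s)+\epsilon(\t)+\epsilon(\s\t)\bigr), \]
so $\epsilon(\s)+\epsilon(\t)+\epsilon(\s\t)\equiv (g-1)/2\pmod 2$. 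Since $\epsilon(\s)=\delta_1$, $\epsilon(\t)=\delta_2$, $\epsilon(\s\t)=\delta_3$ by the choice of $u,v,w$, we get $\delta_1+\delta_2+\delta_3\equiv(g-1)/2$. Therefore the fixed-point-free case $u+v+w=0$ occurs exactly when $g=4k+1$, and the case with fixed points $u+v+w=\iota$ exactly when $g=4k+3$, which are the two claimed equivalences. The main obstacle is precisely this last step: the combinatorics alone shows that one type is realised uniquely but is silent about which, so the geometry must be injected through Riemann–Hurwitz, and I expect the only delicate points to be verifying the absence of totally ramified points (so the Euler-characteristic count is clean) and the parity arithmetic—both controlled by the disjointness of fixed loci on $\PP^1$ and the fact, from Proposition \ref{prop:involutions}, that every non-hyperelliptic involution here has $0$ or $4$ fixed points.
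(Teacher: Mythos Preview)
Your proof is correct and follows essentially the same strategy as the paper: both reduce the question to a parity computation coming from a genus formula for the $\ZZ_2^2$-quotient. The paper invokes Accola's theorem for the $\langle\s,\t\rangle$-action (which packages the Riemann--Hurwitz contributions of the three intermediate quotients at once and so bypasses any stabilizer analysis), whereas you apply Riemann--Hurwitz directly to the degree-four map and supply the ``no full stabilizer'' check by hand via the induced $\ZZ_2^2$-action on $\PP^1$; your $\FF_2$-linear framing also makes the uniqueness and the dichotomy $u+v+w\in\{0,\iota\}$ more explicit than the paper's concluding remark.
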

\begin{proof}
Assume $g(H)=2n+1$.
Without loss of generality, by Proposition \ref{prop:involutions}, we can assume $\s,\t$ are fixed point free. Then the existence of the group of the fixed point free involutions is equivalent to the fact that the involution $\s\t$ is fixed point free. 

Denote by $g_{\alpha}$ the genus of the quotient curve $H/\alpha$ for $\alpha$ an involution and by $g_0$ the genus of $H/\langle\s,\t\rangle$. According to Accola's Theorem (\cite[Theorem 5.9]{A}) for the group $\langle \s,\t\rangle$ we obtain 
\begin{eqnarray*}
2(2n+1) + 4g_0 & = & 2g_{\sigma} + 2g_{\tau} + 2g_{\sigma\tau}\\
2n+1 +2g_0 & = &g_{\sigma} + g_{\tau}  + g_{\sigma\tau} \\
 &=& 2n + 2 + g_{\sigma\tau}.
\end{eqnarray*}
Since the left-hand side is odd, $\s\t$ is fixed point free if and only if $n=2k$.

Analogously, the group $\langle\iota\s,\ita\rangle$ contains $\s\t$, so it contains only involutions with fixed points if and 
only if $n=2k+1$.

The uniqueness of the groups follows from the fact that any other subgroup contains $\iota$ or contains both fixed-point free involutions and involutions with fixed points.
\end{proof}

\section{Hyperelliptic double coverings}\label{sec:higher}
In this section, we focus on the bottom-up perspective.
According to Proposition \ref{prop:involutions},  there are three possibilities for a hyperelliptic double covering, namely \'etale coverings and  
coverings branched in 2 or 4 points.

\subsection{Coverings branched in 2 points}
Let us assume $H$ is hyperelliptic and $f: C\ra H$ is a covering branched in 2 points. Firstly, we  show a necessary and sufficient condition for $C$ to be hyperelliptic.

\begin{prop}\label{prop:Angela}
Let $f: C\ra H$ be a covering of a hyperelliptic curve $H$ branched in 2 points $P,Q\in H$. Then $C$ is 
hyperelliptic if and only if $P=\iota Q$ and the line bundle defining the covering is $\cO_H(w)$ for some 
Weierstrass  point $w$. 
\end{prop}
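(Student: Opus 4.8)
The plan is to prove both implications through the square-root model of the cover. Write $H\colon y^2=F(x)$ with $\iota_H\colon(x,y)\mapsto(x,-y)$ and $\PP^1=H/\iota_H$ having coordinate $x$, and realise the double covering as $C=\Spec_H(\cO_H\oplus L^{-1})$, i.e.\ $\CC(C)=\CC(H)[t]/(t^2-s)$ for a rational function $s\in\CC(H)^*$ with $\Div(s)=P+Q-2D$ and $\cO_H(D)\cong L$; the branch divisor is the odd-order part of $\Div(s)$. The subtle point throughout is that $\mathcal O_H(P+Q)$ determines $L$ only up to a $2$-torsion twist, so the real content of the statement is pinning down $L$ exactly, and excluding the configuration in which $P,Q$ are themselves Weierstrass points.

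For the direction $(\Leftarrow)$ I would argue explicitly. Assuming $P=\iota_H Q$, write $P=(x_0,y_0)$ with $y_0\neq0$ and $w=(a,0)$, and take the function $s=\tfrac{x-x_0}{x-a}$, whose divisor is $P+Q-2w$ (a simple zero at each of $P,Q$ and a double pole at $w$). Thus $C=H[\sqrt s]$ is branched exactly at $P+Q$ and is defined by $L=\cO_H(w)$, matching the hypotheses. Since $s$ depends only on $x$, it is the pullback of a function on $\PP^1$, so $\iota_H$ lifts to $\tilde\iota\colon(x,y,t)\mapsto(x,-y,t)$, and the fixed field of $\tilde\iota$ contains $\CC(x,t)$ with $t^2=s(x)$, which is rational (the plane curve $t^2=(x-x_0)/(x-a)$ has genus $0$). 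Hence $C/\tilde\iota=\PP^1$ and $C$ is hyperelliptic.

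For the direction $(\Rightarrow)$ I would start from the hyperelliptic involution $\iota_C$ of $C$ and the deck involution $\sigma$ of $f$. By Proposition \ref{prop:permute_Wpoints}, $\iota_C$ commutes with $\sigma$, so it descends to an involution $\bar\iota$ of $H=C/\sigma$; since $H/\bar\iota=C/\langle\sigma,\iota_C\rangle=(C/\iota_C)/\bar\sigma=\PP^1$ and $\bar\iota\neq\mathrm{id}$ (otherwise $H=C/\iota_C=\PP^1$, contradicting $g(H)\ge1$), uniqueness of the hyperelliptic involution gives $\bar\iota=\iota_H$. In the function field, $\iota_C^*t$ must equal $\beta t$ for some $\beta\in\CC(H)^*$ with $\beta\cdot\iota_H^*\beta=1$; by Hilbert~90 I can rescale $t$ so that the new $s$ is $\iota_H$-invariant, i.e.\ $s=s(x)$ is pulled back from $\PP^1$ (this normalisation is the main obstacle, as it is precisely what removes the $2$-torsion ambiguity and fixes $L$ on the nose).

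With $s$ invariant, the conclusion follows from divisor bookkeeping. Because $f\colon H\to\PP^1$ is ramified exactly at the Weierstrass points, $\mathrm{ord}_{w'}(s)$ is even at every Weierstrass point $w'$; since branching requires odd order, $P$ and $Q$ are non-Weierstrass, which excludes the unwanted configuration. The zero divisor $P+Q$ is $\iota_H$-invariant and neither point is fixed, forcing $Q=\iota_H P$. Finally, on $\PP^1$ the function $s$ has a single simple zero (the image of $P$), hence a single simple pole at some value $c_\infty$; then $2D=f^*(c_\infty)$, and since $f^*(c_\infty)$ is reduced unless $c_\infty$ is a branch value, we must have $f^*(c_\infty)=2w$ for a Weierstrass point $w$, giving $D=w$ and $L=\cO_H(w)$, as claimed.
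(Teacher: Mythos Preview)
Your function-field approach is sound and genuinely different from the paper's cohomological one, but the last paragraph has a real gap. You assert that once $s\in\CC(x)$, ``on $\PP^1$ the function $s$ has a single simple zero (the image of $P$), hence a single simple pole.'' This does not follow from what precedes it. Knowing $s\in\CC(x)$ and that the branch locus of $C\to H$ is $\{P,Q\}$ only tells you that $s$ has odd $\PP^1$-order at $p_0$ (the image of $P$) and even order at every other \emph{non-branch} value of $H\to\PP^1$; at branch values the $\PP^1$-order of $s$ is unconstrained, since it doubles on $H$ and hence never contributes to the branch locus of $C\to H$. So after clearing squares one could a priori face $\Div_{\PP^1}(s)=[p_0]+[q_1]-[q_2]-[q_3]$ with the $q_i$ branch values, and your computation would then give $L\cong\cO_H(w_2+w_3-w_1)$, not of the required form. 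What excludes this is precisely that $C$ is hyperelliptic: your Hilbert~90 normalisation arranges $\iota_C^*t=t$, so $\CC(x,t)=\CC(C)^{\iota_C}$ is the function field of $C/\iota_C=\PP^1$; thus the curve $t^2=s(x)$ has genus $0$, forcing $s$ to have odd order at exactly two points of $\PP^1$. One is $p_0$; the other must be a branch value (else it would enlarge the branch locus of $C\to H$, or equivalently $D$ would fail to be integral). After multiplying $s$ by a square in $\CC(x)^*$ you reach $\Div_{\PP^1}(s)=[p_0]-[c_\infty]$, and the rest of your argument goes through. This missing step is not cosmetic: it is exactly where the hyperellipticity of $C$, as opposed to mere $\iota_H$-equivariance of the cover, is used.

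By contrast, the paper argues both directions via the projection formula. For $(\Leftarrow)$ it computes $h^0(C,f^*\eta)=2$ directly. For $(\Rightarrow)$, after using the lifting criterion $\iota^*\eta\simeq\eta$ to exclude Weierstrass branch points and conclude $Q=\iota P$, it identifies $\cO_C(h_C)$ with $f^*\cO_H(w)$ and reads off $h^0(H,\cO_H(w)\otimes\eta^{-1})=1$, hence $\eta=\cO_H(w)$. Your route is more explicit and builds the commutative $\PP^1$-square directly; the paper's is shorter but hinges on recognising that the two ramification points of $f$ are $\iota_C$-conjugate (equivalently $f^*\eta\simeq\cO_C(h_C)$).
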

\begin{proof}
Let $\eta \in \Pic^1(H)$ be the element defining the covering $f: C\ra H$, so $\eta^2 = \cO_H(P + Q)$. 
Suppose $P= \iota Q$ and $\eta = \cO_H(w)$ with $w$ a Weierstrass point. By the projection formula

\begin{eqnarray*}
h^0(C, f^*\eta ) &= & h^0(H, \eta \otimes f_*\cO_C) \\
& = & h^0(H, \eta ) +  h^0(H, \cO_H) \\
& =&  2 
\end{eqnarray*}
Since $\deg f^*\eta =2$, this implies that $C$ is hyperelliptic.

Now assume that $C$ is hyperelliptic. Let $h_C$ and $h_H$ be the hyperelliptic divisors on $C$, respectively on $H$. Notice that the hyperelliptic involution on $C$ is a lift of the hyperelliptic
involution on $H$. Since every automorphism of $C$ commutes with the hyperelliptic involution, the ramification 
locus of $f$ is invariant under the hyperelliptic involution, so either it consists of two points conjugated to 
each other or of two Weierstrass points. In the latter case, $\eta$ is a square root of $\cO_H(w_1 + w_2)$,
where $w_1, w_2$ are Weierstrass points, but a necessary condition for the hyperelliptic involution $\iota$ to lift 
to an involution on $C$ is $\iota^* \eta \simeq \cO_H(h_H) \otimes \eta^{-1} \simeq \eta$, that is,  $\eta^2 \simeq
\cO_H(h_H)$, a contradiction. Therefore, the branch locus is of the form $\{ P, \iota P \}$. By the projection 
formula
$$
2= h^0(C, \cO_C(h_C) ) =  h^0(H,  f_*(f^*\cO_H(w))) = h^0(H, \cO_H(w)) +  h^0(H, \cO_H(w) \otimes \eta^{-1})
$$
with $w \in H$ a Weierstrass point. This implies that $\eta$ is of the form $\cO_H(w)$. 
\end{proof}
One constructs a commutative diagram of hyperelliptic curves 
(left Diagram \ref{diag:Const_tildeC2})
starting from $2g+3$ given points in $\PP^1$. Let $[y],[z],[w_1],\ldots,[w_{2g+1}]\in\PP^1$.
Let $H$ be the  hyperelliptic genus $g$ curve ramified in $z,w_1,\ldots,w_{2g+1}$ mapping to the corresponding points with brackets in $\PP^1$,  and let $y_1,y_2$ be the
fibre over $[y]$. Let $f:C\ra H$ be a double covering branched in $y_1,y_2$ and defined by $\cO(z)$. The hyperelliptic curve $C$ of genus $2g$ can also be 
constructed in the following way. Let $p:\PP^1\ra\PP^1$ be the double covering branched in $[y],[z]$. For $i=1,2$ 
denote by $[y'],[z'],[w_1^{i}],\ldots,[w_{2g+1}^i] \in\PP^1$ the respective preimages. Then $C$ is a double covering of $\PP^1$
branched in $[w_1^{i}],\ldots,[w_{2g+1}^i]$. Clearly, the preimages of $[y'],[z']$ in $C$ coincide with the appropriate preimages of $y_1,y_2,z$ (see right Diagram \eqref{diag:Const_tildeC2}). 

\begin{equation} \label{diag:Const_tildeC2}
\xymatrix@R=.7cm@C=.7cm{
& C_{2g} \ar[dr]^{2:1} \ar[dl]_{2:1} & \\
H' _{g}\ar[dr]_{2:1} & & H_g \ar[dl]^{2:1}\\
 &\PP^1& } \hspace{2.6cm}
 \xymatrix@R=1.1cm@C=1.1cm{
 C \ar[d]_{2:1} \ar[r]^{f} & H \ar[d]^{2:1} \\
\PP^1 \ar[r]^p & \PP^1 
}
\end{equation}
\begin{figure}[h!]
\includegraphics[width=13cm,  trim={0.3cm 9cm 0cm 8cm}, clip]{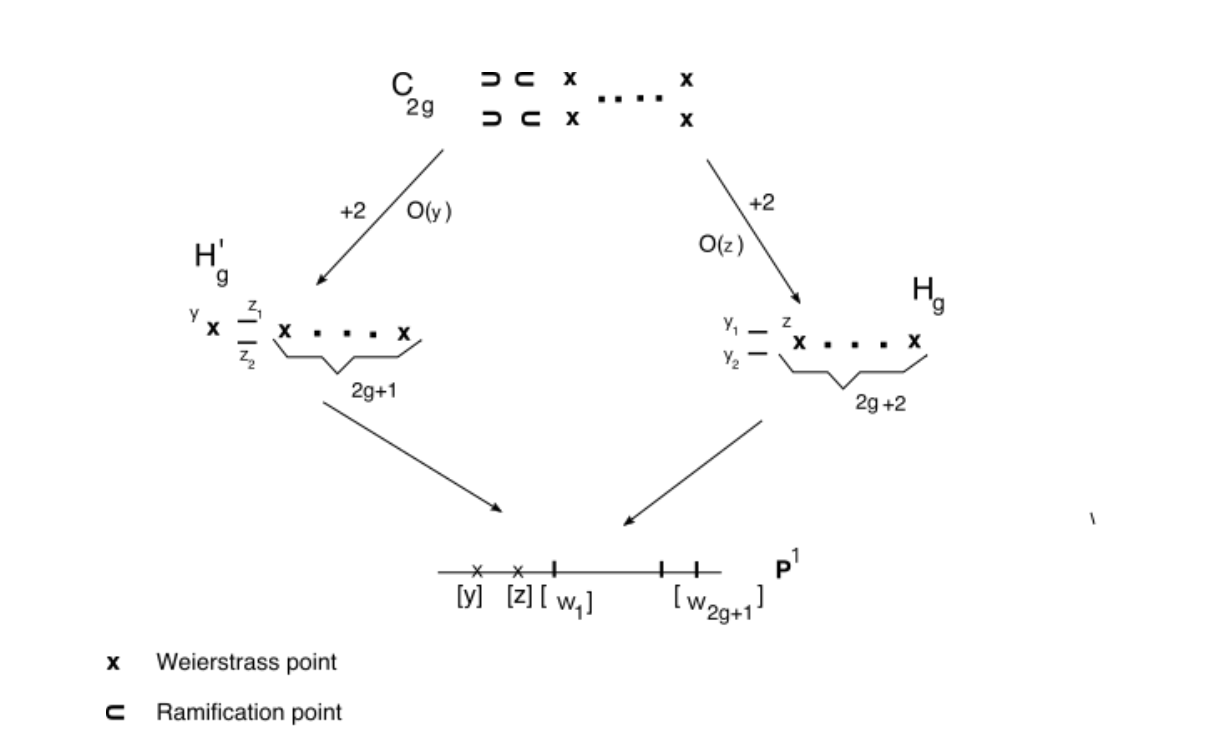}
  \caption{Hyperelliptic coverings ramified in two points}
 \label{fig:hyperelliptic_branched+2} 
\end{figure}

According to \cite{M}, the the Prym variety of 
an étale double covering $f:C\ra H$ over an hyperelliptic curve is isomorphic to the product of the Jacobians of the curves, obtained as the quotients by the two other involutions on $C$. 
In this case, since $C$  is hyperelliptic one of these quotient curves is isomorphic to $\PP^1$ and the  other is the hyperelliptic curve $H'$ of genus g, appearing in the left Diagram \ref{diag:Const_tildeC2}, defined by exchanging the role of $y$ and $z$. Hence, the Prym variety $P(C/H)$ of the covering $f:C\ra H$ is isomorphic to $JH'$.
The distribution of the Weierstrass points is illustrated in Figure \ref{fig:hyperelliptic_branched+2}.
\begin{cor}\label{never2}
The construction shows that the Prym map of a double covering branched in 2 points is never injective, since the Jacobian of $H'$ does not recognise the branching points. In particular, if one moves a point $[z]$ in $\PP^1$, one gets a one dimensional family of coverings $C\ra H$ with the same Prym.
\end{cor}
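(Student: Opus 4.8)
The plan is to take as given the identification recalled just above the statement: for the covering $f\colon C\ra H$ branched in the two points over $[y]$, the Prym variety $\Prym(f)$ is the Jacobian $JH'$ of the genus $g$ hyperelliptic curve $H'$ obtained by interchanging $[y]$ and $[z]$, so that $H'$ is ramified over $[y],[w_1],\ldots,[w_{2g+1}]$. Granting this, the corollary reduces to one remark: the set $\{[y],[w_1],\ldots,[w_{2g+1}]\}$ does \emph{not} involve $[z]$, so $H'$ — and hence the abelian variety $JH'$ — is unchanged when $[z]$ moves, whereas $H$, being ramified over $\{[z],[w_1],\ldots,[w_{2g+1}]\}$, genuinely varies. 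My strategy is therefore to let $[z]$ move and thereby produce a positive-dimensional family of pairwise non-isomorphic coverings all carrying the single fixed Prym $JH'$.

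For completeness I would first recall why $\Prym(f)=JH'$, this being the geometric core. As in the left diagram of \eqref{diag:Const_tildeC2}, $C$ is the hyperelliptic double cover of the upper $\PP^1$, and the deck involution of $p\colon\PP^1\ra\PP^1$ preserves the branch locus $\{[w_j^i]\}$ of that cover, hence lifts to an involution $\rho$ of $C$. Together with $\iota_C$ this generates a Klein four-group $\langle\rho,\iota_C\rangle\subset\Aut(C)$ with intermediate quotients $C/\rho=H$, $C/\rho\iota_C=H'$ and $C/\iota_C=\PP^1$, and with full quotient $C/\langle\rho,\iota_C\rangle=\PP^1$. Applying Accola's isogeny decomposition to this group (as in the proof of Proposition \ref{prop:cominv}), the two rational quotients contribute nothing and one obtains $JC\sim JH\times JH'$. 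As $f^*JH$ is the first factor, its complementary abelian subvariety, i.e.\ $\Prym(f)$, is $JH'$, and the explicit geometry of the diagram makes this an isomorphism of principally polarized abelian varieties.

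To conclude non-injectivity I would fix $[y],[w_1],\ldots,[w_{2g+1}]$ and let $[z]$ range over $\PP^1\setminus\{[y],[w_1],\ldots,[w_{2g+1}]\}$. Each such $[z]$ yields a covering $f_{[z]}\colon C_{[z]}\ra H_{[z]}$ whose Prym is the fixed Jacobian $JH'$. Since $g\ge 1$ at least three of the points $[w_j]$ are available, $\mathrm{PGL}_2$ acts with finite stabiliser on the branch configuration of $H_{[z]}$, so moving the single point $[z]$ genuinely moves the isomorphism class of $H_{[z]}$; hence the $H_{[z]}$, and a fortiori the coverings $f_{[z]}$, are pairwise non-isomorphic for generic distinct values of $[z]$. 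This exhibits a one-dimensional family of such double coverings all mapping to the single point $JH'$ under the Prym map, so the map is not injective.

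The only genuine obstacle is the Prym identification $\Prym(f)=JH'$ used in the second paragraph; everything after it is a dimension count. I would therefore focus the argument on making the second double-cover structure $C\ra H'$ explicit and checking that $JH'$ is exactly the complement of $f^*JH$, since the independence of $H'$ from $[z]$, and the resulting failure of injectivity, are then immediate.
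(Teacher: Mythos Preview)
Your proposal is correct and follows essentially the same approach as the paper, which in fact gives no separate proof: the corollary is stated as an immediate consequence of the construction and of the identification $\Prym(f)\simeq JH'$ asserted just before it. Your second paragraph, supplying the Klein four-group argument for that identification, actually goes beyond what the paper writes out, but is consistent with the diagram~\eqref{diag:Const_tildeC2} and the surrounding discussion.
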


\subsection{\'Etale coverings and coverings branched in 4 points}
Now, we consider a hyperelliptic curve $Y$ of genus $g\geq 2$ and a double \'etale covering $\pi: X \ra Y$, so $X$ is of genus 
$2g-1$, that is, $Y$ is the quotient of $X$ by a fixed point free involution $\tau$.  
According to 
\cite[Proposition 4.2]{bo} $X$ is hyperelliptic if and only if the 2-torsion point $\eta$ defining $\pi$ is of the form $\eta=\cO_Y(w_1-w_2)$
where $w_1$ and $w_2$ are Weierstrass points.  Assume that $X$ is hyperelliptic and let $\iota$ be the hyperelliptic involution. Let 
$Y':= X / \langle \iota\tau \rangle$, which is   of genus $g-1$.  The double covering $f: X \ra Y'$ is ramified in four points. The
hyperelliptic involution $\iota$ on $X$ descends to an hyperelliptic involution on $Y'$, denoted by $j$ (see Diagram \eqref{diag:diag3}).

Conversely, starting from a hyperelliptic curve $Y'$ of genus $g-1$ we can give a necessary and sufficient condition 
for $X$ to  be hyperelliptic.
\begin{prop} \label{hyp-cov}
Let $Y'$  be a hyperelliptic curve of genus $g-1$ and $f: X \ra Y'$ a double covering ramified in
four points defined by a line bundle $\eta \in \Pic^{2}(Y')$, such that  $\eta^2 \simeq \cO_{Y'}
(B)$, where $B$ is the branch
locus of the covering. 
Then $X$ is hyperelliptic if and only if $\eta = \cO_{Y'}(h_{Y'}) $, with $h_{Y'}$ the hyperelliptic divisor on $Y'$
and $B \in |2h_{Y'}|$ is reduced.

\end{prop}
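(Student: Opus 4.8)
The plan is to prove both implications by tracking the hyperelliptic involution of $X$ through the covering data, using the projection formula (as in Proposition \ref{prop:Angela}) for the necessity and a fibre-product description of $X$ for the sufficiency.

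For the necessity direction I would assume $X$ is hyperelliptic with hyperelliptic involution $\iota$, and write $\sigma$ for the covering involution of $f$, so $X/\sigma=Y'$. By Proposition \ref{prop:permute_Wpoints}, $\iota$ commutes with $\sigma$, hence descends to an involution of $Y'$, and by Corollary \ref{hypquot} this descended involution is exactly the hyperelliptic involution $\iota_{Y'}$ of $Y'$, i.e.\ the deck transformation of the hyperelliptic map $\phi\colon Y'\to\PP^1$. Therefore the composite $\phi\circ f\colon X\to\PP^1$ is $\iota$-invariant and factors through the hyperelliptic quotient $q\colon X\to X/\iota\cong\PP^1$ via a degree-two map $\PP^1\to\PP^1$; pulling back $\cO_{\PP^1}(1)$ through both factorisations gives $f^*\cO_{Y'}(h_{Y'})\simeq\cO_X(2h_X)$, with $h_X$ the hyperelliptic divisor of $X$. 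Since $f_*\cO_X=\cO_{Y'}\oplus\eta^{-1}$, the projection formula then yields
\[
3=h^0(X,\cO_X(2h_X))=h^0(Y',\cO_{Y'}(h_{Y'}))+h^0\!\big(Y',\cO_{Y'}(h_{Y'})\otimes\eta^{-1}\big)=2+h^0\!\big(Y',\cO_{Y'}(h_{Y'})\otimes\eta^{-1}\big).
\]
As $\cO_{Y'}(h_{Y'})\otimes\eta^{-1}$ has degree $0$, the surviving section forces it to be trivial, so $\eta\simeq\cO_{Y'}(h_{Y'})$; then $\cO_{Y'}(B)\simeq\eta^2\simeq\cO_{Y'}(2h_{Y'})$ gives $B\in|2h_{Y'}|$, reduced because $f$ is branched in four distinct points.

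For the sufficiency direction I would assume $\eta=\cO_{Y'}(h_{Y'})$ and $B\in|2h_{Y'}|$ reduced. Using $|2h_{Y'}|=\phi^*|\cO_{\PP^1}(2)|$, the divisor $B$ is the preimage $\phi^{-1}(a+b)$ of two distinct points $a,b\in\PP^1$ which, by reducedness, are not branch points of $\phi$. I would take the double covering $p\colon\widetilde{\PP}^1\to\PP^1$ branched at $a,b$ (defined by $\cO_{\PP^1}(1)$) and identify $X$ with $Y'\times_{\PP^1}\widetilde{\PP}^1$: base-changing $p$ along $\phi$ produces the double cover of $Y'$ with branch divisor $\phi^{-1}(a+b)=B$ and defining bundle $\phi^*\cO_{\PP^1}(1)=\cO_{Y'}(h_{Y'})=\eta$, which is precisely $f$. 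Since the branch loci of $\phi$ and $p$ are disjoint and $\eta$ is nontrivial, this fibre product is smooth and connected, so the identification $X=Y'\times_{\PP^1}\widetilde{\PP}^1$ is genuine, and the second projection $X\to\widetilde{\PP}^1$ is a degree-two map onto $\widetilde{\PP}^1\cong\PP^1$, exhibiting $X$ as hyperelliptic.

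The step I expect to be most delicate is pinning $\eta$ down on the nose rather than only up to $2$-torsion: the existence of a lift of $\iota_{Y'}$ gives only $\iota_{Y'}^*\eta\simeq\eta$, that is $\eta^2\simeq\cO_{Y'}(2h_{Y'})$, which leaves $\eta$ ambiguous by an element of $JY'[2]$. It is the factorisation through the hyperelliptic quotient together with the section count from the projection formula (equivalently, the fibre-product description) that rigidifies $\eta$ to $\cO_{Y'}(h_{Y'})$. A secondary point needing care is the smoothness of the fibre product, which rests on $a,b$ avoiding the Weierstrass locus of $Y'$ — exactly what the reducedness of $B$ guarantees.
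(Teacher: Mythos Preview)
Your proof is correct, and in both directions you take a route that differs in emphasis from the paper.

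For sufficiency, the paper applies the projection formula to compute $h^0(X,f^*h_{Y'})=3$ and then invokes Clifford's theorem (equality for a degree-$4$ bundle forces hyperellipticity). You instead realise $X$ explicitly as the fibre product $Y'\times_{\PP^1}\widetilde{\PP}^1$, which hands you the $2:1$ map to $\PP^1$ directly. Your argument is more constructive and in fact reproduces the commutative square the paper draws immediately after the proof; the paper's route is shorter and showcases Clifford as the governing principle.

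For necessity, the paper first reads off $B\in|2h_{Y'}|$ from the diagram, uses the lifting criterion $\iota_{Y'}^*\eta\simeq\eta$ to get $\eta^2\simeq\cO_{Y'}(2h_{Y'})$, and then eliminates the residual $2$-torsion ambiguity by a case analysis: if $\eta\neq\cO_{Y'}(h_{Y'})$ one is forced to $\eta=\cO_{Y'}(w_1+w_2)$ for Weierstrass points, which the projection formula rules out. You sidestep this entirely by first establishing $f^*h_{Y'}\simeq 2h_X$ from the factorisation $\phi\circ f=r\circ q$ through $X/\iota$, so that a single projection-formula computation yields $h^0(\cO_{Y'}(h_{Y'})\otimes\eta^{-1})=1$ in degree $0$, pinning $\eta$ to $\cO_{Y'}(h_{Y'})$ at once. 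This is cleaner and addresses precisely the ``delicate step'' you flagged; the paper's version, by contrast, makes the role of the lifting condition and the excluded Weierstrass case more visible.
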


\begin{proof}
Since $f_*\cO_X \simeq  \cO_{Y'} \oplus \eta^{-1}$, by using the projection formula  one computes
$$
H^0(X,   f^* \cO_{Y'}(h_{Y'}) ) =  H^0(Y',  \cO_{Y'}(h_{Y'}) )  \oplus H^0(Y',  \cO_{Y'}). 
$$
So $\dim H^0(X,   f^* \cO_{Y'}(h_{Y'}) )  = 3$.  According to Clifford's Theorem \cite[Chapter III]{acgh} $X$ is
hyperelliptic and 
$ f^* h_{Y'}$ is a multiple of the hyperelliptic divisor.  Suppose that $X$ is hyperelliptic and the double covering $f:X \ra Y'$ is given by a line bundle $\eta $ such that $\eta^2 \simeq \cO_{Y'}(B)$, with $B$ the 
(reduced) branch locus of $f$.  From the 
commutativity of the Diagram \eqref{diag:diag3} the union of $B$ and the set of Weierstrass points of $Y'$ map to the
branch locus of the map $Y \ra \PP^1$, which has cardinality $2g+2$. This implies that $B \in |2h_{Y'}|$, so $\eta $
is a square root of $\cO_{Y'}(2h_{Y'})$. Since 
$X= \Spec(\cO_{Y'} \oplus \eta^{-1})$ the involution $j$ on $Y'$ lifts to an involution on $X$ if and only if 
$j^* \eta \simeq \eta$. Then, either $\eta = \cO_{Y'}(h_{Y'})$ or $h^0(Y', \eta ) =1$. In the latter case, if 
$\eta=\cO_{Y'}(p_1+q_1)$, then $j(p_1) = p_1 $ and $j(q_1) = q_1$, that is, $\eta$ is defined by the sum of 
Weierstrass points, say $\eta= \cO_{Y'}(w_1+w_2)$. Since $X$ is hyperelliptic, $f^*\eta \in |2h_X|$ but this contradicts the projection formula. 
Therefore, $\eta = \cO_{Y'}(h_{Y'}) $. 

\begin{equation} \label{diag:diag3}
\xymatrix@R=.7cm@C=.7cm{
& X_{2g-1} \ar[dr]^{\pi} \ar[dl]_{f} & \\
Y' _{g-1}\ar[dr]_{2:1} & & Y_g \ar[dl]^{2:1}\\
 &\PP^1& }\hspace{2.6cm}
 \xymatrix@R=1.1cm@C=1.1cm{
 X_{2g-1} \ar[d]_{2:1} \ar[r]^{f} & Y'_{g-1} \ar[d]^{2:1} \\
\PP^1 \ar[r]^p & \PP^1 
}
\end{equation}

One can see this construction from the perspective of points in $\PP^1$. Let $[x],[y],[w_1],\ldots,[w_{2g}]\in\PP^1$ and
let $Y_g$ be the  hyperelliptic genus $g$ curve branched in these points. Let $X_{2g-1}\ra Y_g$ be the \'etale double covering defined by $\cO(x-y)$,
where $ x, y$ are the preimages of 
$[x], [y]$ respectively. On the other hand  $X_{2g-1}$ can be also
constructed in the following way. Let $p:\PP^1\ra\PP^1$ be the double covering branched in $[x],[y]$. For $i=1,2$ 
denote by $[w_1^{i}],\ldots,[w_{2g}^i]\in \PP^1$ the respective preimages under $p$. Then $X_{2g-1}$ is a double covering of $\PP^1$
branched in $[w_1^{i}],\ldots,[w_{2g}^i]$. 
The curve $Y'_{g-1}$ is constructed as double cover of $\PP^1$ branched in $[w_1],\ldots,[w_{2g}]$ and one obtains the commutativity of the right Diagram \eqref{diag:diag3}. The covering $X_{2g-1}\ra Y'_{g-1}$ is branched in 
$x,\iota x,y,\iota y$ and defined by the hyperelliptic bundle (see Figure 
\ref{fig:double_hyperelliptic_covers}). 

\begin{figure}[h!]
\includegraphics[width=11.3cm,  trim={0.3cm 8.5cm 0cm 8.2cm}, clip]{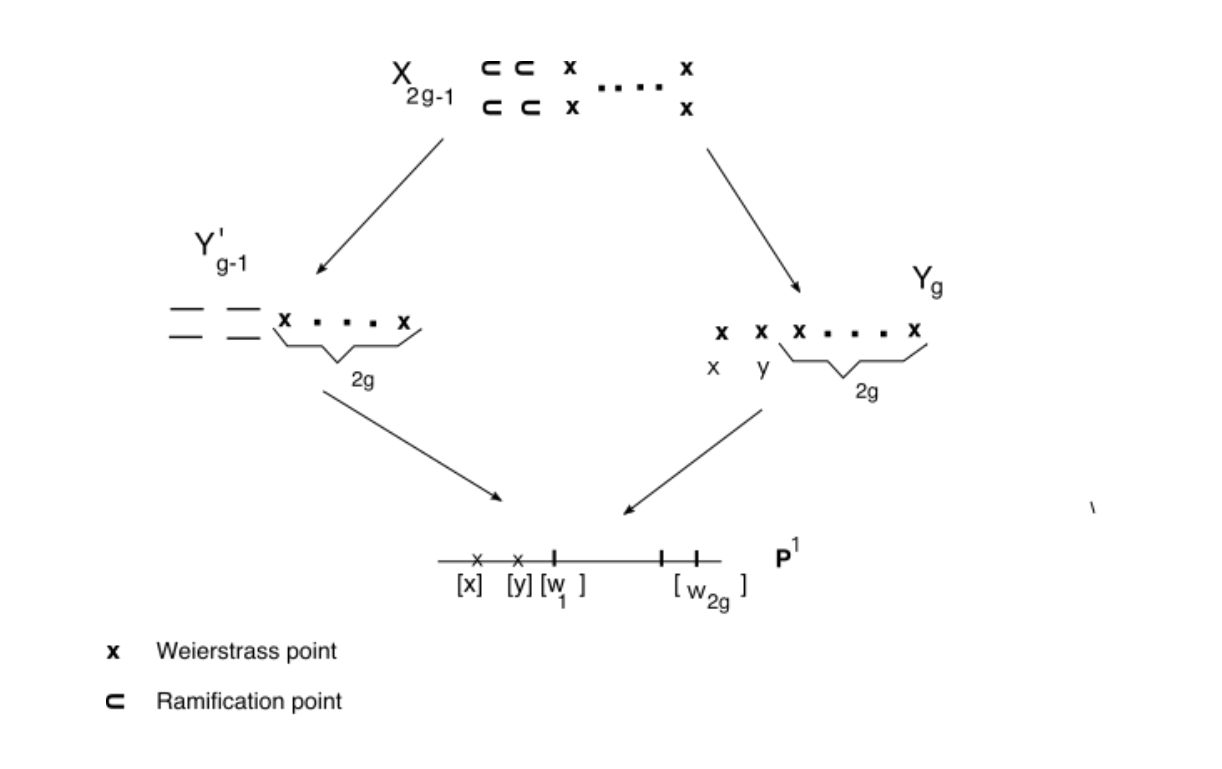}
\caption{Distribution of Weierstrass points on hyperelliptic covers}
\label{fig:double_hyperelliptic_covers} 
\end{figure}

\bigskip

\end{proof}

\subsubsection{The Prym map}
Let $\cR_{g-1,4}^H:=\{(Y' , B) \ \mid \  B \in |2h_{Y'}|\}$ be the space parametrising double coverings $f:X \ra Y'$
ramified in four points where both curves are hyperelliptic, according to the previous proposition this depends only 
on the choice of the branch divisor in $|2h_{Y'}|$. We denote by $\cJ_g^H \subset \cA_g$ the locus of the 
hyperelliptic Jacobians inside  of the moduli space of principally polarised abelian varieties of dimension $g$, and by $\cJ_g^{H, (1,2, \dots, 2)}$  the moduli of abelian varieties which are quotients of hyperelliptic Jacobians by 2-torsions of the form $w_i-w_j$.
Let $\cR_{g}^H:= \cR_{g,0}^H$ be the moduli space of hyperelliptic étale double coverings over curves of genus $g$. For $b=0,4$ we define  the Prym map $\Pr_{g,b}$ as the map which associates to a hyperelliptic double covering $[X\ra Y] \in \cR_{g,b}^H$  its Prym variety $P(X/Y)$.

\begin{prop}\label{twocases}
The relation given by left Diagram \eqref{diag:diag3} induces an isomorphism 
$$
\gamma: [f:X \ra Y'] \mapsto [\pi: X \ra Y]
$$ 
fitting in the  following commutative diagram

\begin{equation} \label{diag4}
\xymatrix@R=1cm@C=1cm{
\cR_{g-1,4}^H \ar[rr]^{\gamma} \ar[dd]_{\Pr_{g-1,4}}  \ar@{->}[rdrd]& & \cR_{g}^H \ar[dd]^{\Pr_{g,0}} \ar@{->} [ldld] \\
&  {} & \\
\cJ_g^{H, (1,2, \dots, 2)}   & & \cJ_{g-1}^H  
}
\end{equation}
 where the diagonal arrows are the corresponding forgetful maps. In particular $\deg
 \Pr_{g-1}\mid_{\cR_{g-1,4}^H} = {2g+2 \choose 2}$
 and $\Pr_{g}\mid_{\cR_{g}^H}$ is a $\PP^2$-bundle. 
\end{prop}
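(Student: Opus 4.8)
The plan is to construct $\gamma$ geometrically from the left Diagram~\eqref{diag:diag3} and then read off everything else from the resulting commutative square. Starting from $(Y',B)\in\cR_{g-1,4}^H$, Proposition~\ref{hyp-cov} attaches to it the hyperelliptic curve $X$ of genus $2g-1$, with hyperelliptic involution $\iota$; let $\ita$ denote the deck involution of $f$ (so $Y'=X/\ita$), and put $\t:=\iota\ita$, an involution since $\iota$ is central by Proposition~\ref{prop:involutions}, and $Y:=X/\t$. Because $g(X)=2g-1$ is odd and $\ita$ has four fixed points, Proposition~\ref{prop:involutions} forces $\t$ to be fixed-point free, so $\pi\colon X\to Y$ is \'etale and $Y$ is hyperelliptic of genus $g$ by Corollary~\ref{hypquot}; hence $[\pi]\in\cR_g^H$ and I set $\gamma([f])=[\pi]$. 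The inverse assigns to an \'etale $[\pi]$ the branched cover $f\colon X\to X/\ita$ of Proposition~\ref{hyp-cov}; the two assignments are mutually inverse because each merely records the single curve $X$ together with its Klein subgroup $\langle\iota,\t\rangle$ and its two non-hyperelliptic quotients, and performing the construction in families upgrades $\gamma$ to an isomorphism of the moduli spaces.

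The commutativity of the two triangles rests on the single fact that the hyperelliptic involution acts as $-1$ on $JX$. From $\pi^*\Nm_\pi=1+\t$ and $f^*\Nm_f=1+\ita=1-\t$ (using $\iota^*=-1$) I obtain
\[
\Prym(f)=\ker(\Nm_f)^0=\im(1+\t)=\pi^*JY\cong JY/\langle\eta\rangle,
\]
where $\eta$ is the $2$-torsion point defining $\pi$, so that $\ker(\pi^*)=\langle\eta\rangle$; restricting the principal polarization of $JX$ equips this quotient with a polarization of type $(1,2,\dots,2)$, which yields the left triangle. Symmetrically,
\[
\Prym(\pi)=\ker(\Nm_\pi)^0=\im(1-\t)=f^*JY'\cong JY',
\]
and since $f$ is ramified $f^*$ is injective and the induced polarization is principal, which yields the right triangle. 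Thus $\Pr_{g-1}=(\text{forgetful})\circ\gamma$ and $\Pr_g\circ\gamma=(\text{forgetful})$, as required.

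The two numerical statements then follow from the diagram. Under $\gamma$ the map $\Pr_g$ is identified with the forgetful map $(Y',B)\mapsto JY'$; by Torelli its fibre over a hyperelliptic Jacobian is the set of reduced divisors in $|2h_{Y'}|$, and since $2h_{Y'}=\phi^*\cO_{\PP^1}(2)$ for the hyperelliptic map $\phi$ one has $|2h_{Y'}|\cong\PP^2$, so $\Pr_g|_{\cR_g^H}$ is a $\PP^2$-bundle (the projectivization of $\phi_*\cO(2h_{Y'})$ over the moduli of hyperelliptic curves of genus $g-1$). For the degree, $\gamma$ identifies $\Pr_{g-1}$ with the map $(Y,\eta)\mapsto JY/\langle\eta\rangle$, and I would compute its generic fibre by reconstructing the hyperelliptic curve $Y$ from the polarized abelian variety $A=JY/\langle\eta\rangle$ and then enumerating the admissible $2$-torsion: a class $\eta=\cO_Y(w_i-w_j)$ is precisely an unordered pair among the $2g+2$ Weierstrass points of $Y$, of which there are $\binom{2g+2}{2}$, and this is the asserted degree.

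The main obstacle is exactly this last reconstruction step: one must show that the Prym $A$ determines the hyperelliptic Jacobian $JY$ (equivalently $Y$, by Torelli) and that the fibre of $\Pr_{g-1}$ consists precisely of the covers coming from the classes $\cO_Y(w_i-w_j)$, with no collapsing and no spurious preimages. I expect to handle this through the explicit Weierstrass-point description of $JY[2]$ together with the explicit inverse of the Prym map constructed in the later sections, which is what ultimately pins the count at $\binom{2g+2}{2}$.
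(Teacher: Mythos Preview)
Your construction of $\gamma$ and proof of the commutativity of the two triangles is essentially the paper's argument, only more explicit: where the paper cites Mumford for $P(X/Y)\cong JY'$ and states $P(X/Y')\cong JY/\langle\eta\rangle$ by analogy, you unpack this via the idempotents $1\pm\t$ on $JX$. Your derivation of the $\PP^2$-bundle statement from the right triangle (fibre $=$ reduced divisors in $|2h_{Y'}|\cong\PP^2$) is correct and is exactly what the paper intends when it leaves both numerical claims as unargued consequences of the diagram.

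On the degree $\binom{2g+2}{2}$ you have put your finger on the only substantive point: one must know that the polarised quotient $JY/\langle\eta\rangle$ determines $JY$ (so that the fibre is parametrised by the choices of hyperelliptic $\eta$). The paper does not supply this argument either; it simply records the count. Your proposed fix, however, is misdirected: the ``explicit inverse of the Prym map constructed in the later sections'' concerns the \emph{Klein} $(\ZZ_2^2)$ Prym maps of Sections~\ref{etaleKlein} and onwards, not the double-cover Prym maps $\Pr_{g-1}$ and $\Pr_g$ appearing here---indeed Corollary~\ref{never}, which follows immediately from this proposition, says precisely that those maps are \emph{not} injective. So nothing in the later sections will close this gap; the honest statement is that both you and the paper leave this step unjustified, and you should not expect the Klein machinery to help.
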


\begin{proof}
It is well-known (\cite{M})  that the Prym variety of an \'etale double covering $X \ra Y$ over a hyperelliptic curve is 
isomorphic to the product of the Jacobians of the quotient curves $X/\iota \simeq \PP^1$ and $X/\iota \tau$, which gives $P(X / Y) \simeq JY'$ as a principally polarized abelian variety.  
This proves the commutativity of the top right triangle of the diagram. Similarly, we have
$P(X / Y') \simeq JY/ \langle  \beta \rangle$, with $\beta=w_i-w_j \in JY[2]$ the element defining the \'etale 
covering $\pi$. This shows the commutativity of the top left triangle of the diagram.
\end{proof}

\begin{cor}\label{never}
Hyperelliptic Prym maps of  \'etale double coverings and double coverings branched in 4 points are 
never injective. 
\end{cor}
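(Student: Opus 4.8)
The plan is to derive both non-injectivity statements as immediate formal consequences of Proposition \ref{twocases}, in which the fibre structure of the two relevant restricted Prym maps has already been made explicit. The point is that Proposition \ref{twocases} records enough quantitative information about $\Pr_g\mid_{\cR_g^H}$ and $\Pr_{g-1}\mid_{\cR_{g-1,4}^H}$ to rule out injectivity on the nose, so the corollary requires only the elementary observation that a morphism with positive-dimensional fibres, or with generic fibre of cardinality greater than one, cannot be injective.

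First I would treat the \'etale case. By Proposition \ref{twocases} the restricted Prym map $\Pr_g\mid_{\cR_g^H}$ is a $\PP^2$-bundle over $\cJ_{g-1}^H$. In particular every fibre is isomorphic to $\PP^2$ and hence has dimension $2$, so the map collapses each such fibre to a single point of $\cJ_{g-1}^H$ and is therefore not injective. The geometric content is transparent from the isomorphism $\gamma$: fixing the target $JY'\simeq P(X/Y)$ amounts, on the branched side $\cR_{g-1,4}^H$, to fixing the curve $Y'$ and letting the branch divisor $B$ vary in the linear system $|2h_{Y'}|\cong\PP^2$. Thus a whole two-dimensional family of pairwise non-isomorphic coverings shares the same Prym variety, which already exhibits the failure of injectivity.

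Next I would treat coverings branched in four points. Here Proposition \ref{twocases} gives $\deg\Pr_{g-1}\mid_{\cR_{g-1,4}^H}={2g+2\choose 2}$, a quantity strictly greater than $1$ for every $g\geq 2$ (in fact at least $15$). A dominant, generically finite morphism of degree exceeding one is never injective, since a general point of its image has ${2g+2\choose 2}$ distinct preimages. The underlying reason is again visible from the proof of Proposition \ref{twocases}, where one has $P(X/Y')\simeq JY/\langle w_i-w_j\rangle$: the quotient only retains the $2$-torsion class $w_i-w_j$, and the ${2g+2\choose 2}$ unordered pairs $\{w_i,w_j\}$ of Weierstrass points typically produce the same abelian variety while corresponding to genuinely different coverings. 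Since neither restricted Prym map is injective, the statement follows; I do not expect any real obstacle here, as the entire argument is a short formal deduction from the structural description already established in Proposition \ref{twocases}.
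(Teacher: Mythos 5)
Your proposal is correct and takes essentially the same approach as the paper: Corollary \ref{never} is stated there without a separate proof precisely because it is an immediate consequence of Proposition \ref{twocases}, which is exactly how you argue. Your two deductions --- that the $\PP^2$-bundle structure of $\Pr_g\mid_{\cR_g^H}$ gives positive-dimensional fibres, and that $\deg \Pr_{g-1}\mid_{\cR_{g-1,4}^H}={2g+2 \choose 2}>1$ forces a general point of the image to have several preimages --- are the intended reading of that proposition.
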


\subsection{Useful notation}
We recall the following notation from \cite{bo} that helps with dealing with abelian subvarieties. Let X be an abelian variety and $M_i$ abelian varieties such that there
exist embeddings $M_i \hookrightarrow X$ for $i = 1,...,k$. We write
$$
 X = M_1 \boxplus M_2  \ldots \boxplus M_k
$$
  if $\epsilon_{M_1} + \epsilon_{M_2} + \ldots +\epsilon_{M_k} = 1$, where $\epsilon_{M_i}$ are the associated symmetric idempotents. In particular, 
$X = M \boxplus N $ if and only if $(M,N)$ is a pair of complementary abelian subvarieties of $X$. If $M_i$'s are general enough, then
the decomposition is unique up to permutation, see
\cite[Proposition 5.2]{bo}.
We will also use the following notation. If $f:X\to Y$ is a covering and $f^*$ is not an embedding we will denote the image $\im(f^*(JY))$ by $JY^*$.

In the sequel we will denote by the same letter an automorphism of the covering curve and its extension to the Jacobian, except for the hyperelliptic involution, whose extension is $-1$. We will also denote the identity as $1$. By $m_k$ we denote the multiplicity by $k$ on an abelian variety.

\bigskip

\section{Prym maps of hyperelliptic \'etale Klein coverings}\label{etaleKlein}
In \cite{BOklein}, we have considered \'etale Klein coverings of genus 2 curves and have shown that the Prym map is injective in this case. We now generalise the result to hyperelliptic Klein coverings of higher genera. Recall that a Klein subgroup $\langle \eta,\xi \rangle \simeq \ZZ_2 \times \ZZ_2$ of $JH[2]$ is called isotropic, respectively non-isotropic, if it is isotropic (resp.  non-isotropic) with respect to the Weil form $e: JH[2]\times JH[2]\ra \FF_2$.

\subsection{From the bottom construction}
Let  $H$ be a genus $g$ hyperelliptic curve with 
Weierstrass points $w_1,\ldots,w_{2g-1},x,y,z\in H$ 
and let $[w_1],\ldots,[w_{2g-1}],[x],[y],[z]\in \PP^1$ be the corresponding set of $2g+2$ branched points. Set $\eta=\cO_H(x-y),\  \xi=\cO_H(y-z)$.
According to 
\cite[Theorem 4.7]{bo}, the covering $\tC$ associated to the non-isotropic Klein group  $G=\{0,\eta,\xi,\eta+\xi\} \subset JH[2]$ is hyperelliptic and since 
the $4:1$ map $\tC\ra H$ is \'etale, $\tC$ is of genus $4g-3$.
Let $C_x$ be the double covering of $H$ defined by $\x$, $C_y$ defined by $\e+\x$ and $C_z$ defined by $\e$; all three of genus $2g-1$ Then the Prym varieties of these coverings are Jacobians of curves, denoted by $H_x,\ H_y,\ H_z$ respectively, of genus $g-1$. Recall that for $j\in\{x,y,z\}$, the curve $H_j$ is given by choosing $[j], [w_1],\ldots,[w_{2g-1}]$ as branch points.
These curves fit into the following commutative diagram (we draw only two curves to make it easier to read).
\begin{equation} \label{diagetale}
\xymatrix@R=.6cm@C=.6cm{
&& \tC \ar[dr]^{et} \ar[dl]_{et} &&  \\
& C_{x}\ar[dr]^{et} \ar[dl]_{+4} & & C_{y}\ar[dr]^{+4} \ar[dl]_{et} & \\
H_{x}\ar[drr] &&H \ar[d] && H_{y}\ar[dll] \\
&& \PP^1 &&
}
\end{equation}
Here $+4$, denotes a 2:1 map branched in 4 points and 
{\it et} stands for an étale 2:1 map. 
\subsection{Decomposition of $J\tC$}
 In order to decompose the Jacobian of $\tC$ and describe the Prym variety $P(\tC/H)$
of the covering $\tC \ra H$,
we will use a top-down perspective. Let $\tC$ be a hyperelliptic curve of genus $4g-3$ with commuting fixed point free involutions $\s,\t,\s\t\in\Aut(\tC)$. Without loss of generality, we can assume $C_x=\tC/\s, C_y=\tC/\t, C_z=\tC/\s\t$ and $H=\tC/\langle \s,\t\rangle $. With this notation, we have that $H_x=\tC/ \langle \s,\iota\t \rangle$ and the following diagram commutes
\begin{equation}\label{diag:injectionJHx}
\xymatrix@R=.9cm@C=.6cm{
& \tC \ar[dr]^{+4} \ar[dl]_{et} \ar[d]_{+4} & \\
C_{\s}\ar[dr]_{+4} &  C_{\iota \t} \ar[d]_{+2} & C_{\iota\s\t} \ar[dl]^{+2}\\
&H_x& 
}
\end{equation}
where $C_{\alpha}=\tC/\alpha$ with $\alpha$ an involution. Analogously one checks that $H_y=\tC/\langle \t,\iota\s\rangle$ and $H_z=\tC/\langle \s\t,\iota\t\rangle$.
\begin{prop} \label{DecompJC-et}
The Jacobian of $\tC$ is decomposed in the following way
$$J\tC=JH^*\boxplus JH_x\boxplus JH_y\boxplus JH_z.$$
In particular, $P(\tC/H)=JH_x\boxplus JH_y\boxplus JH_z$.
\end{prop}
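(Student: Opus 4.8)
The plan is to realise the asserted decomposition as the isotypical decomposition of $J\tC$ under the group action, by exhibiting the four subvarieties as images of an orthogonal family of symmetric idempotents whose sum is $1$. Consider the group $G=\langle\iota,\s,\t\rangle\cong\ZZ_2^3$ acting on $\tC$, and hence on $J\tC$; this produces an algebra homomorphism $\QQ[G]\to\End_\QQ(J\tC)$. Because $\iota$ is the hyperelliptic involution, its induced action is $-1$, so the homomorphism factors through $\QQ[G]/(\iota+1)\cong\QQ[\langle\s,\t\rangle]\cong\QQ[\ZZ_2^2]$. Concretely, every occurrence of $\iota$ may be replaced by $-1$; for instance $\ita$ acts as $-\t$ and $\iota\s$ as $-\s$.

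The main step is to identify the symmetric idempotent attached to each pullback image. For a subgroup $K\leq G$ with quotient map $q_K\colon\tC\to\tC/K$ one has $q_K^*q_{K*}=\sum_{k\in K}k$, so $\im(q_K^*)$ equals the image of the averaging idempotent $e_K:=\tfrac1{|K|}\sum_{k\in K}k$. Since $G$ is a $2$-group, every element is its own inverse, and as the Rosati involution for the canonical polarisation of $J\tC$ sends a curve automorphism to its inverse, each $e_K$ is Rosati-fixed; hence $e_K$ is exactly the symmetric idempotent $\epsilon_M$ of $M=\im(q_K^*)$. Reading off the relevant subgroups from Diagram \eqref{diag:injectionJHx} and its $y,z$-analogues, and substituting $\iota=-1$, I compute
\begin{align*}
\epsilon_{JH^*}&=e_{\langle\s,\t\rangle}=\tfrac14(1+\s+\t+\s\t), &
\epsilon_{JH_x}&=e_{\langle\s,\ita\rangle}=\tfrac14(1+\s-\t-\s\t),\\
\epsilon_{JH_y}&=e_{\langle\t,\iota\s\rangle}=\tfrac14(1-\s+\t-\s\t), &
\epsilon_{JH_z}&=e_{\langle\s\t,\ita\rangle}=\tfrac14(1-\s-\t+\s\t).
\end{align*}
These are precisely the four primitive idempotents of $\QQ[\ZZ_2^2]$, indexed by the characters $(\pm,\pm)$ of $\langle\s,\t\rangle$, so they are pairwise orthogonal; adding the four expressions, the coefficients of $\s,\t,\s\t$ cancel and the constant term gives $1$. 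Thus $\epsilon_{JH^*}+\epsilon_{JH_x}+\epsilon_{JH_y}+\epsilon_{JH_z}=1$, which is exactly the meaning of
$$J\tC=JH^*\boxplus JH_x\boxplus JH_y\boxplus JH_z.$$
For the final assertion, $P(\tC/H)$ is the complementary subvariety of $JH^*$, i.e. the image of $1-\epsilon_{JH^*}=\epsilon_{JH_x}+\epsilon_{JH_y}+\epsilon_{JH_z}$, so the same identity restricts to $P(\tC/H)=JH_x\boxplus JH_y\boxplus JH_z$.

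I expect the only delicate point to be the bookkeeping in the middle step: confirming that the three ``twisted'' quotients are genuinely the quotients by the Klein subgroups $\langle\s,\ita\rangle$, $\langle\t,\iota\s\rangle$, $\langle\s\t,\ita\rangle$ (which is what Diagram \eqref{diag:injectionJHx} records), and checking that replacing $\iota$ by $-1$ is legitimate at the level of symmetric idempotents and not merely up to isogeny. As a consistency check the dimensions match, since $\dim JH^*+\dim JH_x+\dim JH_y+\dim JH_z=g+3(g-1)=4g-3=\dim J\tC$, in agreement with the traces of the four idempotents.
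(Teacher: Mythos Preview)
Your argument is correct and follows essentially the same route as the paper: both identify each subvariety as the image of the averaging endomorphism $\sum_{k\in K}k$ for the appropriate Klein subgroup $K$, substitute $\iota=-1$, and observe that the resulting four expressions sum to $4$ (equivalently, the four symmetric idempotents sum to $1$). Your write-up is slightly more explicit about why the idempotents are Rosati-symmetric and frames things in terms of the group algebra $\QQ[\ZZ_2^2]$, but the underlying computation is identical to the paper's.
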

\begin{proof}
The proof follows from straightforward computation. Firstly, note that Diagram \ref{diag:injectionJHx} shows that $JH_x$ is embedded in $JC_{\iota\s\t}$ which is embedded in $J\tC$, hence $JH_x$ is embedded in $J\tC$ with the restricted polarisation type being four times the principal polarisation on $JH_x$. Analogously, $JH_y$ and $JH_z$ are also embedded in $J\tC$.

Since the covering $\tC\ra H$ is \'etale, by \cite[Proposition 11.4.3]{bl}, the pullback map is not an embedding, so we denote the image of $JH$ as $JH^*$. Moreover, $JH^*=\im(1+\s+\t+\s\t)$. Since the hyperelliptic involution extends to $(-1)$ on $J\tC$ we have that $JH_x=\im(1+\s-\t-\s\t)$, $JH_y=\im(1-\s+\t-\s\t)$,
$JH_z=\im(1-\s-\t+\s\t)$. Observe that sum of the endomorphisms defining these Jacobians is 4,  which is also the exponent of each subvariety in 
$J\tC$, so $\epsilon_{JH^*}+\epsilon_{JH_x}+\epsilon_{JH_y}+\epsilon_{JH_z}=1$.
Since, by definition, the Prym variety is complementary to $JH^*$,  we get that $P(\tC/H)=JH_x\boxplus JH_y\boxplus JH_z$.
\end{proof}

\begin{prop}\label{4.2}
The addition map $\psi:JH_x\times JH_y\times JH_z\lra P(\tC/H)$ is a polarised isogeny of degree $4^{2g-2}$ and its kernel is contained in the set of 2-torsion points.
\end{prop}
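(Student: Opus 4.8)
The plan is to treat the three assertions in turn, deriving everything from the decomposition $P(\tC/H)=JH_x\boxplus JH_y\boxplus JH_z$ of Proposition \ref{DecompJC-et} together with the fact that the symmetrising endomorphism vanishes on the Prym.

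First I would check that $\psi$ is a polarised isogeny. Since $JH_x,JH_y,JH_z$ are complementary abelian subvarieties of $P(\tC/H)$ of dimension $g-1$ each, their dimensions add up to $3(g-1)=\dim P(\tC/H)$, and the relation $\epsilon_{JH_x}+\epsilon_{JH_y}+\epsilon_{JH_z}=1$ shows they generate $P(\tC/H)$; hence $\psi$ is surjective with finite kernel, i.e.\ an isogeny. Complementarity of the three factors means they are mutually orthogonal with respect to the polarisation $\Theta$ of $P(\tC/H)$, so the cross terms in $\psi^*\Theta$ vanish and $\psi^*\Theta$ is the product of the restrictions $\Theta|_{JH_x},\Theta|_{JH_y},\Theta|_{JH_z}$. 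By Proposition \ref{DecompJC-et} each such restriction is four times the principal polarisation, so $\psi^*\Theta$ equals $4$ times the product principal polarisation; in particular $\psi$ is polarised.

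Next I would prove that $\ker\psi$ lies in the $2$-torsion. The endomorphism $1+\s+\t+\s\t$ preserves $P(\tC/H)$ and lands in $JH^*=\im(1+\s+\t+\s\t)$, so its image is contained in the finite group $P(\tC/H)\cap JH^*$; being a morphism of abelian varieties with finite image it must vanish, whence $1+\s+\t+\s\t=0$ on $P(\tC/H)$. Consequently $JH_x=\im(1+\s)$, $JH_y=\im(1+\t)$ and $JH_z=\im(1+\s\t)$ on the Prym. Given $(a,b,c)\in\ker\psi$, I would write $a=(1+\s)w$, $b=(1+\t)u$, $c=(1+\s\t)v$ and apply $1+\s$ to the relation $a+b+c=0$. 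Using $(1+\s)^2=2(1+\s)$ together with $(1+\s)(1+\t)=(1+\s)(1+\s\t)=1+\s+\t+\s\t=0$, the $b$- and $c$-terms die and one is left with $2a=0$. By symmetry $2b=2c=0$, so $\ker\psi\subseteq(JH_x\times JH_y\times JH_z)[2]$.

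Finally, for the degree I would use $\chi(\psi^*\Theta)=\deg(\psi)\,\chi(\Theta)$. The left-hand side equals $\chi$ of $4$ times a product principal polarisation on a variety of dimension $3(g-1)$, namely $4^{3(g-1)}$; thus $\deg(\psi)=4^{3(g-1)}/\chi(\Theta)$, and the target value $\deg(\psi)=4^{2g-2}$ is equivalent to $P(\tC/H)$ carrying a polarisation with $\chi(\Theta)=2^{2(g-1)}$. Equivalently, since the kernel is $2$-torsion, $\deg(\psi)$ is the order of the kernel of the induced addition map $JH_x[2]\times JH_y[2]\times JH_z[2]\to P(\tC/H)[2]$, and computing it reduces to showing that the span $JH_x[2]+JH_y[2]+JH_z[2]$ has $\FF_2$-dimension $2(g-1)$. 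I expect this last point to be the main obstacle: the subgroups $JH_j[2]$ are the $2$-torsion of the images $\im(1+\cdot)$, not the fixed loci of the involutions, so controlling their mutual position inside $P(\tC/H)[2]$ requires either the explicit description of these $2$-torsion points in terms of Weierstrass points used throughout the paper, or the general formula for the polarisation type of the Prym of a $\ZZ_2^2$-covering. Everything else is formal.
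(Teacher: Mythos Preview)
Your argument that $\psi$ is a polarised isogeny and that $\ker\psi\subset(JH_x\times JH_y\times JH_z)[2]$ is correct. The kernel argument is in fact a pleasant variant of the paper's: the paper uses the inclusions $JH_x\subset\Fix(\sigma,\iota\tau)$, $JH_y\subset\Fix(\tau,\iota\sigma)$, $JH_z\subset\Fix(\sigma\tau,\iota\sigma)$ and applies $\iota\sigma$, $\iota\tau$ to the relation $c=-a-b$ to deduce $\iota a=a$, $\iota b=b$; you instead observe that $1+\sigma+\tau+\sigma\tau$ vanishes on $P$, rewrite $JH_j=\im(1+\cdot)$ on $P$, and kill the cross terms by applying $1+\sigma$. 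Both routes are short and essentially dual to one another.

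The genuine gap is the degree. You correctly reduce to $\chi(\Theta)=4^{g-1}$ and then stop, proposing either an explicit $\FF_2$-count of $JH_x[2]+JH_y[2]+JH_z[2]$ or an appeal to a general Klein--Prym formula. Neither is needed. The paper gets the type of $\Theta$ in one line by complementarity: $P(\tC/H)$ is the complement of $JH^*$ inside the \emph{principally} polarised $J\tC$, so the restricted polarisation on $P$ has type $(1,\ldots,1,d_1,\ldots,d_g)$ where $(d_1,\ldots,d_g)$ is the type of $JH^*$. For the \'etale $\ZZ_2^2$-covering with non-isotropic $G$, $JH^*=JH/G$ carries restricted type $(1,4,\ldots,4)$, so $P$ has type $(\underbrace{1,\ldots,1}_{2g-2},\underbrace{4,\ldots,4}_{g-1})$ and $\chi(\Theta)=4^{g-1}$. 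Then $\deg\psi=4^{3(g-1)}/4^{g-1}=4^{2g-2}$ falls out immediately, and together with your $2$-torsion bound this also pins down the kernel as $\{(a,b,-a-b):a\in JH_x[2],\,b\in JH_y[2]\}$. So the ``main obstacle'' you anticipate is not there: you are missing only the observation that the polarisation type of the Prym is forced by that of $JH^*$ via complementarity in $J\tC$.
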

\begin{proof}
In order to compute 
the kernel of $\psi$ we consider the description of $JH_j$, for $j\in\{x,y,z\}$, as fixed loci inside the 
Jacobian of $\tC$:
$$
 JH_{x}  \subset  \Fix(\sigma,  \iota\tau),  \quad JH_{y}\subset \Fix(\tau,  \iota\sigma),  \quad JH_{z}\subset\Fix(\s\tau, \iota\sigma).
$$ 
Let $(a,b,c) \in \Ker \psi $, then $c=-a-b$. Applying $\iota\sigma$  and  $\iota\tau$ to $c=-a-b$ we get 
$$
-a-b=\iota\sigma (-a-b) = -\iota a -b   \quad \textnormal{ and } \quad -a-b=\iota\tau (-a-b) = -a - \iota b  ,
$$
so $\iota a=a$ and $\iota b= b$,  that is, $a,b$ and $c$ are 2-torsion points in their respective Jacobians. This implies
\begin{equation}\label{eq:kervarphi}
    \Ker \psi = \{ (a,b, -a-b) \ \mid \  a \in JH_{x}[2], \ b \in JH_{y}[2] \}.
\end{equation}
The restricted polarisation to $JH_j$ is of type $(4,\ldots,4)$. Since $P(\tC/H)$ is complementary to $JH^*$, it has complementary type which is $(1,\ldots,1,1,4,\ldots,4)$ with $g-1$ fours.
Moreover, $\psi$ as an addition map is polarised and the degree is exactly $4^{2(g-1)}$.
\end{proof}
\begin{rem}
    We included a proof of Proposition \ref{4.2} for the sake of completeness, although it is proven in  \cite{RR} (unpublished) and in the recently published book \cite[Corollary 5.2.6]{LR}.
\end{rem}

\subsection{The Prym map}
 Let $\mathcal{RH}_{g,0}$ denote the moduli space parametrising the pairs $(H, G)$ with $H$ a hyperelliptic curve of genus $g$ and $G$ a Klein subgroup of $JH[2]$ whose generators are differences of Weierstrass points, so the corresponding covering curve $\tC$ is hyperelliptic of genus $4g-3$. We call the elements of $\mathcal{RH}_{g,0}$ 
\'etale hyperelliptic Klein coverings. Set
$$
\delta:= (\underbrace{1,\ldots,1}_{2g-2}, \underbrace{4, \ldots, 4}_{g-1})$$
and let $\cA_{3g-3}^{\delta}$ denote the moduli space of
polarised abelian varieties of dimension $3g-3$
 and polarisation of type $\delta$. The Prym map 
 associates to the hyperelliptic Klein covering 
 $\tC \ra H$ induced by $(H, G)$, the polarised Prym variety $(P(\tC/H), \Xi)$, where $\Xi$ is the 
 restriction to $P(\tC/H)$ of the principal polarisation on $J\tC$.
 
The main aim of this section is to prove that the Prym map 
$$
 Pr_{4g-3,g}^H: \mathcal{RH}_{g,0} \ra \cA_{3g-3}^{\delta}, \qquad (H,G) \mapsto (P(\tC/H), \Xi)
$$
of \'etale hyperelliptic Klein coverings is injective. We will show this by constructing the inverse map explicitly. We start by showing 
the following equivalence of data, which generalises 
\cite[Theorem 3.1]{BOklein}.

\begin{prop} \label{equivalence1}
The following data are equivalent:
\begin{enumerate}
    \item a triple $(H, \eta, \xi)$, with $H$ a hyperelliptic curve of genus $g$ and $\eta$ and $\xi$ differences of 
    Weierstrasss points such that Klein subgroup $G=\langle \eta, \xi \rangle$ of $JH[2]$ is non-isotropic; 
    \item a hyperelliptic curve $\tC$ of genus $4g-3$ with 
    $\ZZ_2^3 \subset \Aut(\tC)$;
    \item a hyperelliptic curve $H$ of  genus $ g$ together with the choice of $3$  Weierstrass points; 
    \item a set of $2g+2$ points in $\PP^1$ with a chosen triple of them, up to projective equivalence (respecting the triple).
\end{enumerate}
\end{prop}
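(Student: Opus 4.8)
The plan is to establish the fourfold equivalence through the correspondences $(3)\Leftrightarrow(4)$, $(1)\Leftrightarrow(3)$ and $(1)\Leftrightarrow(2)$, giving explicit mutually inverse maps in each case; the essential content is in $(1)\Leftrightarrow(2)$. The equivalence $(3)\Leftrightarrow(4)$ is the standard dictionary between hyperelliptic curves and branch configurations: $H$ is recovered as the double cover of $\PP^1$ branched at its $2g+2$ Weierstrass points, two such curves are isomorphic exactly when their branch sets agree up to $\Aut(\PP^1)$, and the hyperelliptic map identifies the Weierstrass points of $H$ with the branch points in $\PP^1$. Thus a choice of three Weierstrass points on $H$ is a choice of three of the $2g+2$ marked points in $\PP^1$, the isomorphisms in (3) matching the projective equivalences in (4) that respect the marked triple.

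Next, $(1)\Leftrightarrow(3)$ is a combinatorial computation with the Weil pairing. I use the description of $JH[2]$ by even subsets of the Weierstrass set $W$ modulo complementation, under which a difference of Weierstrass points $\cO_H(w_i-w_j)$ corresponds to the two-element subset $\{w_i,w_j\}$, the group law is the symmetric difference, and the Weil pairing of two classes $S,T$ is $(-1)^{|S\cap T|}$. If $\eta$ and $\xi$ are two-element classes, then $G=\langle\eta,\xi\rangle$ is non-isotropic if and only if $|S_\eta\cap S_\xi|$ is odd, i.e.\ if and only if $S_\eta$ and $S_\xi$ meet in exactly one Weierstrass point; writing $S_\eta=\{x,y\}$, $S_\xi=\{y,z\}$ this produces three distinct Weierstrass points $x,y,z$ with $\eta=\cO_H(x-y)$, $\xi=\cO_H(y-z)$ and $\eta+\xi=\cO_H(x-z)$. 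Since $G=\{0,\cO_H(x-y),\cO_H(y-z),\cO_H(x-z)\}$ is symmetric in $x,y,z$, the group $G$ depends only on the unordered triple, and conversely the triple is recovered from $G$ as the union of the supports of its nonzero elements. This yields the bijection between the data (1), at the level of the group $G$, and the data (3).

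Finally, $(1)\Leftrightarrow(2)$. For $(1)\Rightarrow(2)$, given $(H,\eta,\xi)$ as above the covering $\tC$ associated with the non-isotropic Klein group $G$ is hyperelliptic by \cite[Theorem 4.7]{bo}, and being \'etale of degree $4$ it has genus $4g-3$ by Riemann--Hurwitz. Its Galois group $G\cong\ZZ_2^2$ provides commuting fixed-point-free involutions $\s,\t$ on $\tC$; since $\iota$ commutes with every automorphism (Proposition \ref{prop:permute_Wpoints}) and, having $8g-4$ fixed points, cannot equal any of the fixed-point-free involutions $\s,\t,\s\t$, we get $\iota\notin G$ and hence $\langle\iota,\s,\t\rangle\cong\ZZ_2^3\subset\Aut(\tC)$. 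For the converse $(2)\Rightarrow(1)$, let $\tC$ be hyperelliptic of genus $4g-3$ with $\ZZ_2^3\subset\Aut(\tC)$. Since Proposition \ref{prop:permute_Wpoints} forbids $\ZZ_2^4\subset\Aut(\tC)$ while $\iota$ commutes with the whole group, necessarily $\iota\in\ZZ_2^3$, so $\ZZ_2^3=\langle\iota,\s,\t\rangle$. Because $4g-3=4(g-1)+1$, Proposition \ref{prop:cominv} singles out a unique Klein subgroup $G\subset\langle\iota,\s,\t\rangle$ of fixed-point-free involutions; the quotient $H:=\tC/G$ is then \'etale of degree $4$, hence hyperelliptic of genus $g$ with $\iota$ descending to its hyperelliptic involution. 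This identifies $G$ with a Klein subgroup of $JH[2]$ and recovers the curve together with its group.

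The main obstacle is to upgrade the abstract Klein subgroup $G\subset JH[2]$ recovered above to one that is \emph{non-isotropic and generated by differences of Weierstrass points}, rather than an arbitrary Klein subgroup. Here I argue geometrically. The three fixed-point-free involutions $\s,\t,\s\t$ generating $G$ yield three intermediate \'etale double covers $\tC/\s,\tC/\t,\tC/\s\t$ of $H$, each defined by one of the three nonzero classes of $G$; since each such involution commutes with $\iota$, the hyperelliptic involution descends and all three quotients are hyperelliptic. By \cite[Proposition 4.2]{bo}, the hyperellipticity of an \'etale double cover forces its defining $2$-torsion class to be a difference of Weierstrass points, so all three nonzero elements of $G$ are two-element classes. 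By the Weil-pairing computation of the previous step, three two-element classes can form a Klein group only when they pairwise meet in exactly one point, i.e.\ only when $G$ is non-isotropic and arises from three Weierstrass points. This recovers the data of (1) and, together with the bijections already established, completes the equivalence.
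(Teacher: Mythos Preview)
Your argument fills in the details the paper leaves implicit and follows the same route: the hyperelliptic dictionary for $(3)\Leftrightarrow(4)$, the Weil-pairing combinatorics for $(1)\Leftrightarrow(3)$, and the bottom-up/top-down constructions of \S4.1 (together with \cite[Thm.~4.7, Prop.~4.2]{bo} and Propositions~\ref{prop:permute_Wpoints}, \ref{prop:cominv}) for $(1)\Leftrightarrow(2)$. Most of it is correct and well organised.

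There is, however, a genuine gap in the last paragraph, at $g=2$. Your closing claim ``three two-element classes can form a Klein group only when they pairwise meet in exactly one point'' is false when $|W|=6$: the three disjoint pairs partitioning $W$ give an \emph{isotropic} Klein group all of whose nonzero elements are differences of Weierstrass points (the symmetric difference $\{w_1,w_2\}\triangle\{w_3,w_4\}=\{w_1,w_2,w_3,w_4\}$ equals the complementary pair $\{w_5,w_6\}$). So, from the mere fact that each nonzero element of $G$ is a two-element class, you cannot conclude non-isotropy.

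The fix is to use the hyperellipticity of $\tC$ one level higher. Write $\eta=\cO_H(x-y)$ and let $f:C_\sigma=\tC/\sigma\to H$ be the intermediate \'etale double cover. The Weierstrass points of $C_\sigma$ are exactly the two preimages of each Weierstrass point of $H$ other than $x,y$, while $f^{-1}(x)$ and $f^{-1}(y)$ are hyperelliptic conjugate pairs on $C_\sigma$. If $\xi=\cO_H(c-d)$ with $\{c,d\}\cap\{x,y\}=\emptyset$, then $f^*\xi$ corresponds to the four-element subset $\{c^{(1)},c^{(2)},d^{(1)},d^{(2)}\}$ of the $4g$ Weierstrass points of $C_\sigma$, which is never a two-element class for $g\ge2$; by \cite[Prop.~4.2]{bo} the cover $\tC\to C_\sigma$ would then fail to be hyperelliptic, contradicting the hypothesis on $\tC$. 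Hence $\{c,d\}$ must meet $\{x,y\}$, and in that case (say $c=y$) one has $f^*\xi\sim z^{(1)}-z^{(2)}$ for the remaining point $z$, so everything is consistent. This forces $G$ to be non-isotropic and generated by three Weierstrass points, also when $g=2$. With this amendment your proof is complete and matches the paper's (terse) argument.
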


\begin{proof}
Equivalences $(1) \Leftrightarrow (3)  \Leftrightarrow (4)$ are obvious. The equivalence $(3) \Leftrightarrow (2) $ follows from \S 3.1.
\end{proof}
\begin{cor}
The moduli space $\mathcal{RH}_{g,0}$ is irreducible.
\end{cor}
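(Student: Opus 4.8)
The plan is to read off irreducibility from the equivalence of data in Proposition~\ref{equivalence1}, using description~(4). That proposition identifies a point of $\mathcal{RH}_{g,0}$ with a configuration of $2g+2$ distinct points in $\PP^1$ endowed with a distinguished unordered triple, considered up to projective equivalence respecting the triple. Thus it suffices to exhibit a surjection onto $\mathcal{RH}_{g,0}$ from an irreducible parameter space, since the image of an irreducible space under a morphism is again irreducible.

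First I would pass to the ordered model. Let $V \subset (\PP^1)^{2g+2}$ be the open locus of ordered tuples $([x],[y],[z],[w_1],\dots,[w_{2g-1}])$ of pairwise distinct points, where the first three coordinates record the chosen triple. As a nonempty open subset of the irreducible variety $(\PP^1)^{2g+2}$, the space $V$ is irreducible. The group $\mathrm{PGL}_2$ acts on $V$ (freely, since an element fixing three distinct points is the identity), and the finite group $S_3 \times S_{2g-1}$ acts by permuting the distinguished triple and the remaining $2g-1$ points separately; these actions commute and preserve the underlying unordered-configuration-with-triple. The datum in~(4) is exactly a point of the quotient of $V$ by $\mathrm{PGL}_2 \times (S_3 \times S_{2g-1})$. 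Since this quotient is the image of the irreducible $V$ under the quotient map, it is irreducible, and by Proposition~\ref{equivalence1} it is in bijection with $\mathcal{RH}_{g,0}$; hence $\mathcal{RH}_{g,0}$ is irreducible.

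The step I expect to require the most care is not the irreducibility of $V$ or of the quotient, which is routine, but making the correspondence of Proposition~\ref{equivalence1} precise enough to transport irreducibility: one should verify that sending an ordered configuration to its hyperelliptic curve with the three marked Weierstrass points defines a \emph{morphism} $V \to \mathcal{RH}_{g,0}$ that is surjective, rather than merely a set-theoretic bijection on closed points. Granting the explicit constructions underlying Proposition~\ref{equivalence1}, which realize $\tC$, $H$ and the Klein group $G$ algebraically from the branch data (and for which the non-isotropy of $G$ is automatic for any genuine triple of distinct Weierstrass points), this compatibility is built in, and irreducibility of $\mathcal{RH}_{g,0}$ follows at once.
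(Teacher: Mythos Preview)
Your proposal is correct and follows essentially the same approach as the paper: the paper's proof is the single line ``It follows from the equivalence $(1) \Leftrightarrow (4)$ of Proposition~\ref{equivalence1},'' and you have simply spelled out why the configuration space in (4) is irreducible.
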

\begin{proof}
It follows from the equivalence $(1) \Leftrightarrow (4)$ of 
the Proposition \ref{equivalence1}.
\end{proof}
 
\begin{lem} \label{multtwo}
Let $JH_x,JH_y,JH_z$ be as before and let $P=P(\tC/H)$. Let $Z=(JH_x \times JH_y \times JH_z)[2]$ be the set of 2-torsion points on the product and let $G_P=\psi(Z)$.
By $m_2$ we denote the multiplication by 2.
Consider the following commutative diagram
\begin{equation} 
\xymatrix@R=.9cm@C=.9cm{
JH_x \times JH_y \times JH_z   \ar[r]^(.7){\psi} \ar[d]_{m_2} & P 
\ar[d]^{\pi_{P}} \\
JH_x \times JH_y \times JH_z    \ar[r]^-{p}&  P/G_P   
}
\end{equation}
Then the map $p$ is a polarised isomorphism of principally polarised abelian varieties.
\end{lem}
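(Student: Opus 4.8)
The plan is to produce $p$ first as an isomorphism of abelian varieties, and then to force it to be polarised by means of a single pullback identity on $A:=JH_x\times JH_y\times JH_z$, whose product principal polarisation I denote $\Theta_A$ (and $\equiv$ denotes algebraic equivalence). Write $K:=\Ker\psi$; by Proposition \ref{4.2} we have $K\subset A[2]=Z$, and by definition $G_P=\psi(Z)=\psi(A[2])$. I would first compute $\Ker(\pi_P\circ\psi)=\psi^{-1}(G_P)=A[2]+K=A[2]=\Ker(m_2)$, the middle equalities using $K\subset A[2]$. Thus $\pi_P\circ\psi$ and $m_2$ are two surjections of $A$ with the same kernel $A[2]$, so the universal property of the quotient yields a unique isomorphism $p$ with $p\circ m_2=\pi_P\circ\psi$; this is exactly the commutativity asserted in the statement.

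For the polarisations I would use two pullback computations. On one side $m_2^{*}\Theta_A\equiv 4\Theta_A$ because $\Theta_A$ is symmetric. On the other side, by Proposition \ref{DecompJC-et} the factors $JH_x,JH_y,JH_z$ are complementary in $P$, and by Proposition \ref{4.2} the Prym polarisation $\Xi$ restricts to $4$ times the principal polarisation on each of them; since $\psi$ is the addition map of these complementary subvarieties, this gives $\psi^{*}\Xi\equiv 4\Theta_A$ as well. Thus $m_2$ and $\psi$ both pull back their target polarisation to the class $4\Theta_A$, whose polarisation map on $A$ has kernel $K(4\Theta_A)=A[4]$. The two kernels $A[2]$ and $K\subset A[2]$ are then isotropic in $A[4]$ for the Weil form $e^{4\Theta_A}$, and $A[2]$ is maximal isotropic because its quotient polarisation $\Theta_A$ is principal.

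The core of the argument is to push this down correctly. Viewing $\psi$ as the quotient $A\to A/K=P$, the polarisation $\Xi$ is the descent of $4\Theta_A$ by $K$, so its kernel is $K(\Xi)=K^{\perp}/K$ (orthogonal complement taken in $A[4]$), of order $4^{2g-2}$ as dictated by the type $\delta$. Inside $P$ one has $G_P=\psi(A[2])=A[2]/K$, and therefore $P/G_P=A/A[2]$, compatibly with $p$. Because $A[2]$ is isotropic and contains $K$, one gets $A[2]\subset A[2]^{\perp}\subset K^{\perp}$, so $G_P=A[2]/K\subset K^{\perp}/K=K(\Xi)$ and $G_P$ is isotropic for $e^{\Xi}$; the order count $|G_P|=4^{g-1}=\sqrt{|K(\Xi)|}$ shows it is maximal isotropic. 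Consequently $\Xi$ descends along $\pi_P$ to a \emph{principal} polarisation $\Xi'$ on $P/G_P$ with $\pi_P^{*}\Xi'\equiv\Xi$.

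Finally I would pull everything back along $m_2$: from $p\circ m_2=\pi_P\circ\psi$ and $\pi_P^{*}\Xi'\equiv\Xi$ one gets $m_2^{*}(p^{*}\Xi')\equiv\psi^{*}\Xi\equiv 4\Theta_A\equiv m_2^{*}\Theta_A$, and since $m_2$ multiplies symmetric classes by $4$ in the torsion-free N\'eron--Severi group, $p^{*}\Xi'\equiv\Theta_A$. As $\Theta_A$ and $\Xi'$ are both principal, this makes $p$ a polarised isomorphism of principally polarised abelian varieties. The main obstacle is the polarisation bookkeeping of the third paragraph --- establishing $\psi^{*}\Xi\equiv 4\Theta_A$ from complementarity, and above all checking that $G_P$ is maximal isotropic in $K(\Xi)$; both become transparent only after $P$, $G_P$ and $P/G_P$ have been rewritten as $A/K$, $A[2]/K$ and $A/A[2]$ and the Weil-form computation for $4\Theta_A$ on $A[4]$ is carried out consistently.
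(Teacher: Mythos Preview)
Your argument is correct and follows essentially the same route as the paper. Both proofs hinge on the inclusion $K=\Ker\psi\subset A[2]$ from Proposition~\ref{4.2} to identify $\Ker(\pi_P\circ\psi)=A[2]=\Ker m_2$, and both use that the top-left copy of $A$ carries the polarisation $4\Theta_A$ (equivalently $\psi^{*}\Xi\equiv 4\Theta_A$) together with the fact that $A[2]$ is (maximal) isotropic in $A[4]$ for $e^{4\Theta_A}$. The paper compresses the polarisation step into a single sentence (``$Z$ is isotropic, so $m_2$, $\psi$, $\pi_P$ are polarised isogenies and the isomorphism theorem gives $p$''), whereas you unpack it explicitly by rewriting $P$, $G_P$, $P/G_P$ as $A/K$, $A[2]/K$, $A/A[2]$ and carrying out the Weil-form and order computations; this is more detailed but not a genuinely different approach.
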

\begin{proof}
Note that $JH_x \times JH_y \times JH_z$ in the top left has product polarisation of type four times the principal one. Hence, $Z$ is an isotropic subgroup of the kernel of the polarising map. In particular $m_2$, having $Z$ as its kernel, is a polarised isogeny (see also \cite[Cor. 2.3.6]{bl}). Moreover, $Z$ is also the kernel of $\pi_P\circ\psi$, hence both $\psi$ and $\pi_P$ are polarised isogenies. Then, the isomorphism theorems yield the existence and uniqueness of the isomorphism $p$.
\end{proof}
\begin{cor}\label{3.5}
Let $\Xi$ be the restricted polarisation on $P$ and $\phi_\Xi$ its polarising isogeny. Then $G_P=\ker(\phi_\Xi)\cap P[2]\simeq\ZZ_2^{2g-2}$.
\end{cor}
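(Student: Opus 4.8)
The plan is to prove the two inclusions $G_P\subseteq \ker(\phi_\Xi)$ and $G_P\subseteq P[2]$, and then to close the argument by a cardinality count after reading off the structure of $\ker(\phi_\Xi)\cap P[2]$ from the polarisation type $\delta$.

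First I would pin down the order and structure of $G_P$. Since $G_P=\psi(Z)$ with $Z=(JH_x\times JH_y\times JH_z)[2]$ and $\psi$ is a group homomorphism, it carries $2$-torsion to $2$-torsion, so $G_P\subseteq P[2]$ and in particular $G_P$ is an elementary abelian $2$-group. To compute $|G_P|$ I would use \eqref{eq:kervarphi}: every element of $\Ker\psi$ has the form $(a,b,-a-b)$ with $a\in JH_x[2]$, $b\in JH_y[2]$, and $-a-b$ is again $2$-torsion, so $\Ker\psi\subseteq Z$. Hence the restriction $\psi|_Z$ has kernel exactly $\Ker\psi$, and
$$|G_P|=\frac{|Z|}{|\Ker\psi|}=\frac{2^{6(g-1)}}{2^{4(g-1)}}=2^{2g-2},$$
using $|Z|=2^{2\dim(JH_x\times JH_y\times JH_z)}=2^{6(g-1)}$ and $|\Ker\psi|=|JH_x[2]|\,|JH_y[2]|=2^{4(g-1)}$. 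This already gives $G_P\cong\ZZ_2^{2g-2}$ as an abstract group.

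Next I would show $G_P\subseteq\ker(\phi_\Xi)$, and here Lemma \ref{multtwo} does the essential work. It exhibits $\pi_P\colon(P,\Xi)\to(P/G_P,\Theta)$ as a polarised isogeny onto a principally polarised abelian variety, the target being identified with $JH_x\times JH_y\times JH_z$ via the polarised isomorphism $p$. Polarisedness means $\pi_P^\ast\Theta\equiv\Xi$, whence $\phi_\Xi=\hat\pi_P\circ\phi_\Theta\circ\pi_P$; since $G_P=\ker\pi_P$, any $x\in G_P$ satisfies $\phi_\Xi(x)=\hat\pi_P(\phi_\Theta(0))=0$, so $G_P\subseteq\ker(\phi_\Xi)$. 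Combined with the previous step this yields $G_P\subseteq\ker(\phi_\Xi)\cap P[2]$.

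Finally I would compute the right-hand side from the type. As $\Xi$ has type $\delta=(1,\dots,1,4,\dots,4)$ with $g-1$ fours, the polarising kernel is $\ker(\phi_\Xi)\cong(\ZZ/4)^{2(g-1)}$, whose $2$-torsion subgroup is $(\ZZ/2)^{2(g-1)}$ of order $2^{2g-2}$. Thus $\ker(\phi_\Xi)\cap P[2]$ has the same order $2^{2g-2}$ as $G_P$, and the inclusion forces equality, giving $G_P=\ker(\phi_\Xi)\cap P[2]\cong\ZZ_2^{2g-2}$. The main point requiring care is the convention under which ``polarised isogeny'' in Lemma \ref{multtwo} delivers $\pi_P^\ast\Theta\equiv\Xi$ and hence the inclusion into $\ker(\phi_\Xi)$; once that is fixed, the degree bookkeeping $\deg\phi_\Xi=|\ker\phi_\Xi|=2^{4g-4}=|G_P|^2$ is automatically consistent, and the remainder is a pure counting argument. (Conceptually, $G_P$ is the unique maximal isotropic subgroup of $\ker(\phi_\Xi)$ consisting of $2$-torsion, but the cardinality comparison suffices.)
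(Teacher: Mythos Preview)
Your proof is correct and follows essentially the same approach as the paper: establish $G_P\subset P[2]$ and $G_P\subset\ker(\phi_\Xi)$ (the paper phrases the latter as ``$G_P$ is a maximal isotropic subgroup of $\ker(\phi_\Xi)$'' using that $P/G_P$ is principally polarised), then conclude equality by comparing cardinalities. Your version is more explicit, computing $|G_P|$ directly from $|Z|/|\Ker\psi|$ and reading off $|\ker(\phi_\Xi)\cap P[2]|$ from the polarisation type, whereas the paper leaves these counts implicit.
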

\begin{proof}
Clearly $G_P \subset P[2]$ and since $P/G_P$ is principally polarised, $G_P$
is a maximal isotropic subgroup of $\ker(\phi_\Xi)$.
Hence $G_P\subset \ker(\phi_\Xi)\cap P[2]$ and the 
claim follows by computing the cardinalities.

\end{proof}

\begin{lem} \label{Intersection}
It holds $G_P = JH_x \cap  JH_y \cap JH_z$.
\end{lem}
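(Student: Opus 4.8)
The plan is to prove the two inclusions separately, using the commutative square of Lemma~\ref{multtwo} together with the equality of cardinalities $\abs{G_P}=2^{2g-2}$ from Corollary~\ref{3.5}. Throughout I identify $JH_x,JH_y,JH_z$ with their images in $P$ under $\psi$, so that the embedding of, say, $JH_x$ is the restriction of $\psi$ to the factor $JH_x\times 0\times 0$, and I recall that each $JH_j$ has dimension $g-1$, so $\abs{JH_j[2]}=2^{2(g-1)}=2^{2g-2}$.

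For the inclusion $JH_x\cap JH_y\cap JH_z\subseteq G_P$ I would pass to the quotient $P/G_P$. Since $p$ is an isomorphism carrying the product polarisation on $JH_x\times JH_y\times JH_z$, the images $\pi_P(JH_x),\pi_P(JH_y),\pi_P(JH_z)$ are exactly the three coordinate subvarieties of the product $P/G_P$, and in particular any two of them meet only in $0$. Hence for $v\in JH_x\cap JH_y$ one has $\pi_P(v)\in\pi_P(JH_x)\cap\pi_P(JH_y)=0$, so $v\in\ker\pi_P=G_P$. A fortiori $JH_x\cap JH_y\cap JH_z\subseteq G_P$.

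For the reverse inclusion I would read off the kernel of $\pi_P$ restricted to each factor from the identity $\pi_P\circ\psi=p\circ m_2$. Restricting it to $JH_x\times 0\times 0$, and using that $\psi$ there is the embedding of $JH_x$ while $m_2$ is multiplication by $2$, one gets $\pi_P(a)=p(2a,0,0)$ for $a\in JH_x$. As $p$ is injective this vanishes precisely when $a\in JH_x[2]$, so $JH_x\cap G_P=\ker(\pi_P|_{JH_x})=JH_x[2]$. This is a subgroup of $G_P$ of order $2^{2g-2}$, which by Corollary~\ref{3.5} is exactly $\abs{G_P}$; therefore $G_P=JH_x[2]\subseteq JH_x$. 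By the symmetry of the construction the same argument yields $G_P\subseteq JH_y$ and $G_P\subseteq JH_z$, whence $G_P\subseteq JH_x\cap JH_y\cap JH_z$.

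The delicate point is the counting step in the third paragraph: the inclusion $JH_x\cap G_P\subseteq G_P$ is a priori strict, and it is only the numerical coincidence $\abs{JH_x[2]}=2^{2g-2}=\abs{G_P}$ that forces equality, and with it $G_P\subseteq JH_x$. One must therefore be careful to apply the square of Lemma~\ref{multtwo} with the correct identifications, namely that the restriction of $\psi$ to a single factor is the chosen embedding and that $p$ is injective; granting this, the (perhaps surprising) upshot is that the three embedded $2$-torsion subgroups actually coincide, $JH_x[2]=JH_y[2]=JH_z[2]=G_P$ inside $P$.
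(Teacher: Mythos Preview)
Your proof is correct, and it takes a genuinely different route from the paper's argument. The paper works directly with the involution picture: for the inclusion $JH_x\cap JH_y\cap JH_z\subseteq G_P$ it observes that an element in the triple intersection is fixed by all the involutions (in particular by $\iota$, hence $2$-torsion) and then writes it as $a=a+a+a=\psi(a,a,a)\in\psi(Z)$; for the reverse inclusion it argues that each summand of an element $\psi(a,b,c)\in G_P$ is $2$-torsion, hence fixed by every involution, and therefore lies in each $JH_j$ (this last step implicitly relying on the kernel description \eqref{eq:kervarphi}).

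Your approach instead leverages the structure established in Lemma~\ref{multtwo} and Corollary~\ref{3.5}: you pass to the quotient $P/G_P\simeq JH_x\times JH_y\times JH_z$, where the images of the three subvarieties are the coordinate factors and thus pairwise disjoint away from $0$, giving the forward inclusion immediately (indeed you get the stronger statement $JH_x\cap JH_y\subseteq G_P$). For the reverse inclusion you use the numerical coincidence $\abs{JH_x[2]}=2^{2g-2}=\abs{G_P}$ to upgrade the obvious containment $JH_x[2]\subseteq G_P$ to an equality. This is a clean and self-contained argument that makes full use of the machinery just developed, and it has the pleasant side effect of showing $JH_x[2]=JH_y[2]=JH_z[2]=G_P$ inside $P$ directly, without referring back to the action of $\sigma,\tau$ on $J\tC$.
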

\begin{proof}
Let $a \in JH_x \cap  JH_y \cap JH_z$,  then $a$ is fixed by all the involutions in $J\tC$, in particular it is a 2-torsion point. Hence $a=a+a+a
\in \im \psi(Z)$. Conversely, if $a+b+c \in G_P$, with $a \in JH_x=\Fix(\s,\t)^0$, $b \in JH_y=\Fix(\iota\s,\t)^0$  and $c \in JH_z= \Fix(\iota\s,\s\t)^0$, then $a,b,c \in P[2]$. One checks that 
$a,b,c \in JH_x \cap  JH_y \cap JH_z$. 

\end{proof}
Let $\pi_j : \tC \ra H_j$ be the 4:1 branched maps in the diagram (4.1)
for $j\in \{x,y,z\}$ an let $k: \Pic^4(\tC) \ra\Pic^0(\tC) $ defined
by $D \mapsto D-2g_2^1$, with $2g_2^1$ the hyperelliptic divisor. Hence, we have injective maps
$$
\alpha_j := k\circ \pi_j^* : \Pic^1(H_j) \ra\Pic^0(\tC) \simeq J\tC.
$$

\begin{prop}\label{gluing}
The maps $\alpha_j$ have as common image in $G_P \subset P[2]$  for 
$j\in \{x,y,z\}$, the image of $2g-1$ Weierstrass points on each of the curves $H_j$.
\end{prop}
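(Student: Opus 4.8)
\emph{Proof proposal.} The plan is to reduce the statement to a linear-equivalence computation on $\tC$, controlled by the distribution of Weierstrass points, and to anchor the images at $0$ using three exceptional Weierstrass points.

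First I would record that each $\alpha_j$ sends Weierstrass points to $2$-torsion. Since $\iota$ descends to the hyperelliptic involution of every $H_j$, a generic fibre of $H_j\to\PP^1$ pulls back to the full $\langle\iota,\s,\t\rangle$-orbit of a point, which splits into four $\iota$-conjugate pairs, each linearly equivalent to $g_2^1$; hence $\pi_j^*h_{H_j}\sim 4g_2^1$. For a Weierstrass point $w$ of $H_j$, using $2w\sim h_{H_j}$,
$$2\,\alpha_j(w)=\pi_j^*(2w)-4g_2^1=\pi_j^*h_{H_j}-4g_2^1=0,$$
so $\alpha_j(w)\in J\tC[2]$.

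The heart of the argument is the fixed-point bookkeeping for the three lifts $\iota\s,\iota\t,\iota\s\t$ of $\iota_H$. Because $\tC$ has genus $4g-3$, the involution $\iota$ has $8g-4=4(2g-1)$ fixed points, while by Proposition \ref{prop:involutions} each of $\iota\s,\iota\t,\iota\s\t$ has exactly $4$. Over a Weierstrass point $v$ of $H$ the four preimages form a $\langle\s,\t\rangle$-torsor, and a short commutation argument shows that exactly one of $\iota,\iota\s,\iota\t,\iota\s\t$ fixes all four of them while the other three fix none. Counting then forces $\iota$ to fix the four preimages over exactly $2g-1$ Weierstrass points of $H$ and each of $\iota\s,\iota\t,\iota\s\t$ over exactly one; by the construction of \S\ref{etaleKlein} (only $x,y,z$ enter $\eta,\xi$, and $H_j$ is branched precisely over $[j]$ among $\{x,y,z\}$) the three exceptional points are $[x],[y],[z]$, matched with $\iota\s,\iota\t,\iota\s\t$ respectively, and the $2g-1$ unexceptional points are $[w_1],\dots,[w_{2g-1}]$.

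With this in hand I would finish as follows. Fix $i$ and let $\tilde v_1,\dots,\tilde v_4$ be the preimages of $w_i$; these are Weierstrass points of $\tC$. Over the Weierstrass point $\tilde w_i^{(j)}$ of $H_j$ lying above $[w_i]$ the map $\pi_j$ is unramified (the $\s$- and $\t$-translates are distinct), and in each of the three cases the deck-orbit of $\tilde v_1$ is exactly $\{\tilde v_1,\dots,\tilde v_4\}$, so $\pi_j^*\tilde w_i^{(j)}=\tilde v_1+\dots+\tilde v_4$ independently of $j$. Hence
$$\alpha_x(\tilde w_i^{(x)})=\alpha_y(\tilde w_i^{(y)})=\alpha_z(\tilde w_i^{(z)})=[\tilde v_1+\dots+\tilde v_4-2g_2^1]=:c_i.$$
To see that $c_i$ actually lies in $JH_j$ and not merely in a translate, I would use the exceptional Weierstrass point $\bar j$ of $H_j$ over $[j]$: its fixing involution is the unique one of $\iota\s,\iota\t,\iota\s\t$ \emph{not} in the deck group of $\pi_j$, so $\pi_j$ is unramified over $\bar j$ and its fibre splits into two $\iota$-conjugate pairs, giving $\pi_j^*\bar j\sim 2g_2^1$ and thus $\alpha_j(\bar j)=0$. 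Therefore $\im\alpha_j=JH_j$, so each $c_i\in JH_x\cap JH_y\cap JH_z=G_P$ by Lemma \ref{Intersection}, and $c_i\in P[2]$ by the first step. This exhibits the $2g-1$ common images $c_1,\dots,c_{2g-1}\in G_P$, as claimed.

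The main obstacle is precisely the middle paragraph: pinning down, among the $2g+2$ Weierstrass points of $H$, which have their $\tC$-preimages fixed by $\iota$ and which by $\iota\s,\iota\t,\iota\s\t$, and matching the three exceptional points with $[x],[y],[z]$ compatibly with the labelling of $H_x,H_y,H_z$. Once this combinatorial picture is secured, everything else is a linear-equivalence computation anchored by the vanishing $\alpha_j(\bar j)=0$.
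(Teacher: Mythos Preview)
Your proof is correct and follows essentially the same route as the paper: the core step---showing that for each $i\le 2g-1$ the pullback $\pi_j^*\tilde w_i^{(j)}$ is the \emph{same} divisor $\tilde w+\s\tilde w+\t\tilde w+\s\t\tilde w$ on $\tC$ regardless of $j$, because $\iota\tilde w=\tilde w$ lets one replace the deck-group orbit by the $\langle\s,\t\rangle$-orbit---is exactly what the paper does. You supply two pieces of scaffolding the paper leaves implicit: the fixed-point count singling out $[w_1],\dots,[w_{2g-1}]$ versus $[x],[y],[z]$, and the anchoring $\alpha_j(\bar j)=0$ that justifies $\alpha_j(\Pic^1 H_j)=JH_j$ (the paper simply asserts the inclusion into $JH_x\cap JH_y\cap JH_z$ ``by construction''); both additions are correct and make the argument self-contained.
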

\begin{proof}
Observe that by construction
$$
\alpha_x(\Pic^1 H_x) \cap \alpha_y(\Pic^1 H_y) \cap \alpha_z(\Pic^1 H_z) 
\subset JH_x \cap  JH_y \cap JH_z =G_P.
$$
On the other hand, for any Weierstrass point $w \in H_x$, image of 
a Weierstrass point $\tilde w \in \tC$ we have  
$$\pi_x^* w \sim \tilde w + \s\tilde w+ \iota\t\tilde w+ \s\t\tilde w
 \sim \tilde w + \s\tilde w+ \t\tilde w+ \s\t\tilde w $$
since $\iota\tilde w = \tilde w$. Similarly, for every $w= \pi_y(\tilde w)$ or $w= \pi_z(\tilde w)$  image of a 
Weierstrass point in $\tC$, one checks that 
$$\pi_j^* w \sim \tilde w + \s\tilde w+ \t\tilde w+ \s\t\tilde w.$$
\end{proof}

\begin{thm}\label{injectiveKlein}
For $g\geq 2$ the hyperelliptic Klein Prym map $Pr_{4g-3,g}^H: 
 \cRH_{g,0} \ra \cA_{3g-3}^{\delta} $ is injective. 
\end{thm}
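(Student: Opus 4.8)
The plan is to construct an explicit inverse to $Pr_{4g-3,g}^H$ and thereby recover the pair $(H,G)\in\cRH_{g,0}$ from the polarised abelian variety $(P,\Xi)$ alone. Every step of the reconstruction will be intrinsic to $(P,\Xi)$, so that injectivity follows at once as soon as the procedure is shown to return the original datum. The whole argument is organised around the principal polarisation obtained after dividing out the canonical subgroup $G_P$.

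First I would read off $G_P$ intrinsically: by Corollary \ref{3.5} it equals $\ker(\phi_\Xi)\cap P[2]\cong\ZZ_2^{2g-2}$, so it depends only on $(P,\Xi)$. By Lemma \ref{multtwo} the quotient $P/G_P$ is principally polarised and isomorphic, as a principally polarised abelian variety, to the product $JH_x\times JH_y\times JH_z$. Each factor is the Jacobian of a smooth irreducible hyperelliptic curve of genus $g-1\ge 1$, hence an indecomposable principally polarised abelian variety; therefore, by the uniqueness of the decomposition of a principally polarised abelian variety into indecomposable factors (\cite{bl}, see also the uniqueness of $\boxplus$ in \cite[Proposition 5.2]{bo}), the unordered triple $\{JH_x,JH_y,JH_z\}$ is determined by $(P,\Xi)$. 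Applying the Torelli theorem recovers the three curves $H_x,H_y,H_z$ up to isomorphism. The permutation ambiguity among the factors is harmless, since the datum we ultimately reconstruct — a set of $2g+2$ points of $\PP^1$ with a distinguished unordered triple — is symmetric in $x,y,z$.

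The heart of the argument is the gluing step, in which the three curves must be reassembled into a single branch configuration. Here I would use that, as abelian subvarieties of $P$, the three Jacobians share their entire two-torsion: by Lemma \ref{Intersection} one has $G_P=JH_x\cap JH_y\cap JH_z$, and since $G_P\subset P[2]$ sits inside each $JH_j$ with $\abs{G_P}=2^{2g-2}=\abs{JH_j[2]}$, a cardinality count gives $G_P=JH_x[2]=JH_y[2]=JH_z[2]$. Thus the intrinsic group $G_P$ furnishes canonical identifications $JH_x[2]\cong JH_y[2]\cong JH_z[2]$. On each curve the two-torsion carries the combinatorial structure of differences of its $2g$ Weierstrass points, and Proposition \ref{gluing} shows that the images of the $2g-1$ shared Weierstrass points coincide inside $G_P$ for all three curves. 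I would therefore locate, inside $G_P$, this common system of $2g-1$ classes; matching it against the branch points recovered from each $H_j$ by Torelli pins down which $2g-1$ of the $2g$ branch points of $H_j$ are the shared points $[w_1],\dots,[w_{2g-1}]$, leaving the three distinguished points $[x],[y],[z]$. Assembling the shared points together with the three distinguished ones yields $2g+2$ points of $\PP^1$ with a marked triple, well defined up to projective equivalence, which by the equivalence $(4)\Leftrightarrow(1)$ of Proposition \ref{equivalence1} is exactly the datum $(H,G)$.

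I expect the gluing step to be the main obstacle. The delicate point is that the abstract identifications $JH_x[2]\cong JH_y[2]\cong JH_z[2]$ supplied by $G_P$ must be shown to be \emph{compatible with the Weierstrass-point structure} in the strong sense of Proposition \ref{gluing}, so that the common $2g-1$ classes genuinely single out the shared branch points and not some other isotropic configuration; one must also verify that the special point of each $H_j$ is correctly placed relative to the shared ones, so that the reconstructed $2g+2$ points form a single well-defined configuration up to $\mathrm{PGL}_2$. Once this compatibility is established, injectivity is immediate: the construction recovers $(H,G)$ from $(P,\Xi)$ by intrinsic operations, so any two points of $\cRH_{g,0}$ with the same image under $Pr_{4g-3,g}^H$ coincide.
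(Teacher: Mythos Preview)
Your proposal is correct and follows essentially the same strategy as the paper's proof: recover $G_P=\ker(\phi_\Xi)\cap P[2]$ intrinsically, pass to the principally polarised quotient $P/G_P$, split it uniquely as $JH_x\times JH_y\times JH_z$, and then use Proposition~\ref{gluing} to match up the $2g-1$ shared Weierstrass points and isolate the distinguished triple $[x],[y],[z]$, reconstructing the datum of Proposition~\ref{equivalence1}. Your additional observation that the cardinality count forces $G_P=JH_x[2]=JH_y[2]=JH_z[2]$ is a clean way to phrase the identification underlying the gluing step, but it does not change the route.
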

\begin{proof}
Let $(P,\Xi)$ be a polarised variety in the image of the Prym map. Let $G=P[2]\cap \ker\phi_\Xi$, which is an isotropic subgroup in $\ker\phi_\Xi$. Let $\pi_P:P\ra P/G$. According to Corollary \ref{3.5}, we have that $P/G$ is principally polarised and by Lemma \ref{multtwo}, $P/G$ is polarised isomorphic to a product of the form  $JH_1\times JH_2\times JH_3$,
for uniquely determined Jacobians $JH_1$, $JH_2$ and $JH_3$.

Now, since $P$ is in the image of the Prym map and the $H_i$, $i\in\{1,2,3\}$,  are actually isomorphic to $H_j$, for $j\in\{x,y,z\}$ 
corresponding to some points $[x],[y],[z] \in\PP^1$. Let $f_i:H_i\ra \PP^1$ be the hyperelliptic maps and $W^i$ the set of Weierstrass points
on each $H_i$. By Proposition \ref{gluing} there exists an automorphism of $\PP^1$ such that $\bigcap f_i(W^i)$ consists of $2g-1$ points in 
such a
way that $\alpha_1(w_n^1)=\alpha_2(w_n^2)=\alpha_3(w_n^3) \in G_P$, for all $n\in \{1, \ldots, 2g-1\}$.
Moreover, there are  uniquely determined points $[x],[y],[z]\in \PP^1$ that are images of the remaining Weierstrass points. In this way, we have constructed the set $\{[w_1],\ldots,[w_{2g-1}], [x],[y],[z]\}$ of $2g+2$ points in $\PP^1$ with a distinguished triple.
Hence, the obtained map $(P, \Xi)\mapsto \{[w_1],\ldots, [w_{2g-1}], [x],[y],[z] \}$ provides the inverse to the Prym map via the equivalence in Proposition \ref{equivalence1}. 
\end{proof}
\begin{rem}
Note that for $g=2$, the curves $H_i$ are actually elliptic, so one uses the notion of 2-torsion points instead of Weierstrass points and some steps are vacuous, see \cite{BOklein}.
\end{rem}

\begin{figure}[h!]
\includegraphics[width=11.3cm,  trim={0.3cm 6cm 0cm 7cm}, clip]{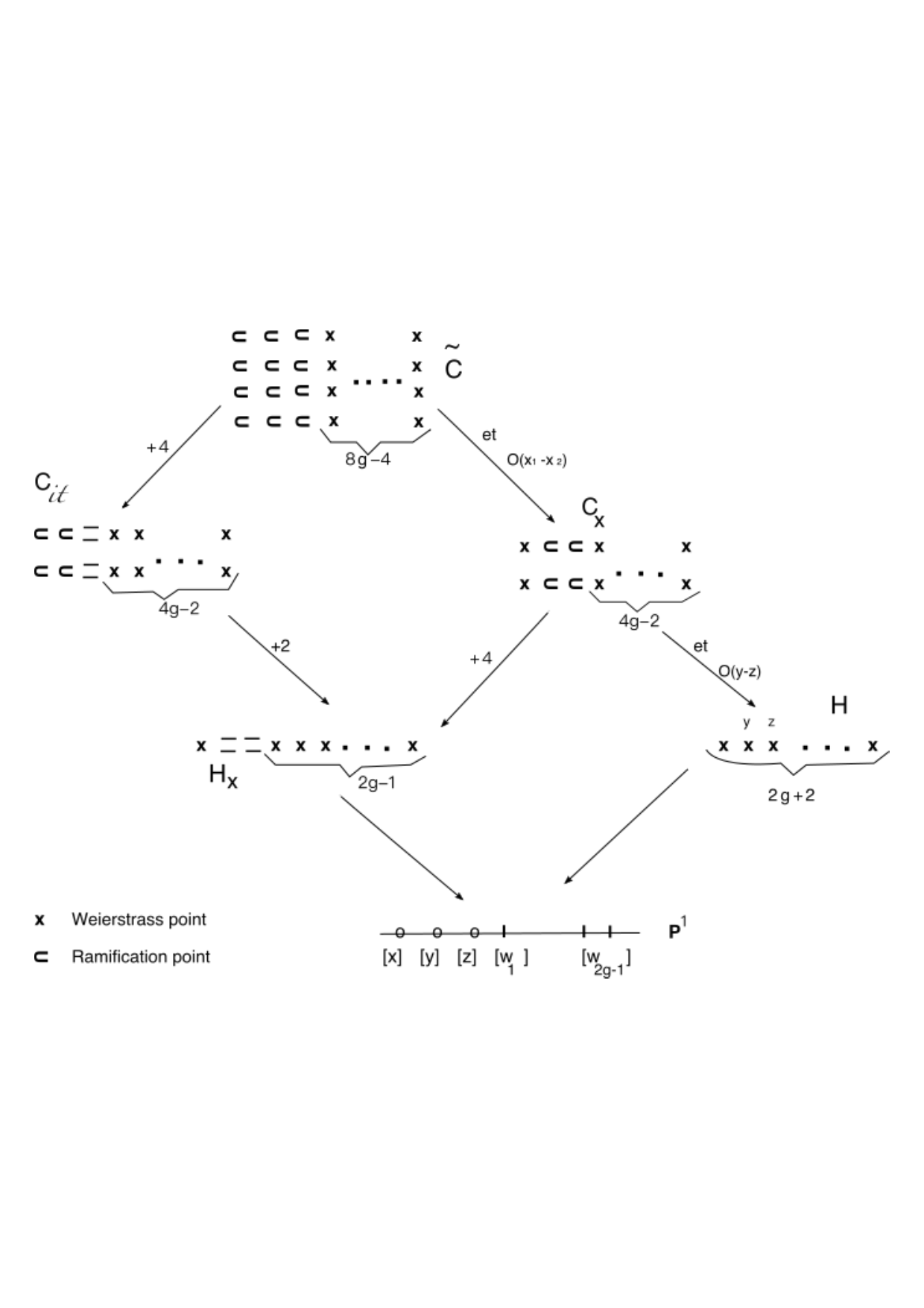}
  \caption{Weierstrass points on hyperelliptic Klein coverings}
 \label{fig:hyperelliptic_covers} 
\end{figure}

\subsection{The mixed case}

Consider a smooth curve $\tC$ of genus $4g-3$, with $g\geq 2$,
admitting  one  fixed point free involution $\s$ and two 
involutions $\iota\t$, $\iota\s\t$ with 4 fixed points each. 
The corresponding tower of curves is the same as in the case
of \'etale Klein coverings (see Diagram \eqref{diagetale} or Figure \ref{fig:hyperelliptic_covers}). 
The starting data is now the base curve $H_{\s, \iota\t}$ 
(for instance  $H_x$ in the diagram) of genus $g-1$, a 
choice of two pairs of conjugated points (branch points for the double covering $C_x \ra H_x$) and a Weierstrass point. According to
Proposition \ref{DecompJC-et},  $ J\tC = JH^* \boxplus JH_x \boxplus JH_y \boxplus JH_z $, therefore the associated Prym variety
is $P= P(\tC / H_x)= JH^*\boxplus JH_y \boxplus JH_z$. Moreover, since the restricted polarisation to $JH_x$ is of type $(4, \ldots, 4)$, the Prym variety $P$  is of type
$$
\delta:= (\underbrace{1,\ldots,1}_{2g-1}, \underbrace{4, \ldots, 4}_{g-1}).
$$
Consider the canonical addition map
$$
\psi: JH^* \times JH_y \times JH_z \ra P.
$$
Since $JH^* \subset \Fix(\s, \t)$, $JH_y \subset \Fix(\t, \iota \s)$ and $JH_z \subset \Fix(\s\t, \iota\s)$, we can conclude as before that 
\begin{equation}
    \Ker \psi = \{ (-(b+c),b, c) \ \mid \  b \in JH_{y}[2], \ c \in JH_{z}[2] \}.
\end{equation}
Analogously to Lemma \ref{multtwo} we have:

\begin{lem} \label{multtwo2}
Let $JH,JH_y,JH_z$ be as before and let $P=P(\tC/H_x)$. Let $Z=(JH \times JH_y \times JH_z)[2]$ be the set of 2-torsion points on the product and let $G_P=\psi(Z)$.
By $m_2$ we denote the multiplication by 2.
Then one obtains the following commutative diagram
\begin{equation} 
\xymatrix@R=1.2cm@C=.9cm{
JH \times JH_y \times JH_z   \ar[rr]^(.5){\psi'} \ar[dd]_{m_2} \ar[dr]_{4:1}
& & P 
\ar[dd]^{\pi_{P}} \\
  &JH^* \times JH_y \times JH_z\ar[ur]^{\psi}   & \\
JH \times JH_y \times JH_z   \ar[rr]^-{p}& & P/G_P   
}
\end{equation}
where $p$ is an isomorphism of principally polarised abelian varieties  and the degrees of $\psi$, respectively
$\pi_P$, are $4^{2g-2}$, respectively  $4^{g-1}$.
\end{lem}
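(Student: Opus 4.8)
The plan is to reproduce the proof of Lemma \ref{multtwo} with one structural change: the first factor of the source is now the genus-$g$ Jacobian $JH$, whose image $JH^*$ in $J\tC$ is merely isogenous to it, since $\tC\ra H$ is étale. First I would put $B:=JH\times JH_y\times JH_z$ with its product principal polarisation $\Theta_B$ and take the indicated $4{:}1$ arrow to be $\kappa:=f^*\times\id\times\id$, where $f\colon\tC\ra H$ is the étale Klein covering; by \cite[Prop.~11.4.3]{bl} the pullback $f^*\colon JH\ra JH^*$ has kernel $\ker f^*\cong\ZZ_2^2$, so $\deg\kappa=4$, and I set $\psi':=\psi\circ\kappa$ for the top edge of the square.

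The first real step is to track polarisations and show that $\psi'$ and $m_2$ pull back the target's principal polarisation to the same class $4\Theta_B$. For the factors $JH_y$ and $JH_z$ this is immediate, their restricted polarisations being of type $(4,\dots,4)$, i.e. $\Xi|_{JH_y}=4\Theta_{H_y}$ and $\Xi|_{JH_z}=4\Theta_{H_z}$. For the first factor I would use $\Xi=\Theta_{\tC}|_P$ together with the standard identity $(f^*)^*\Theta_{\tC}=4\Theta_H$ for the pullback of the principal polarisation under the degree-$4$ map $f$, giving $(f^*)^*\big(\Xi|_{JH^*}\big)=4\Theta_H$. Since the addition map $\psi$ pulls $\Xi$ back to the external product of the restricted polarisations of the three complementary factors (this is the content of $P=JH^*\boxplus JH_y\boxplus JH_z$), I obtain
$$
\psi'^{\,*}\Xi=4\Theta_H\boxtimes 4\Theta_{H_y}\boxtimes 4\Theta_{H_z}=4\Theta_B=m_2^{\,*}\Theta_B,
$$
so both $\psi'$ and $m_2$ are polarised isogenies out of $(B,\Theta_B)$ and $Z=B[2]$ is an isotropic subgroup of $\ker\phi_{4\Theta_B}=B[4]$, just as in Lemma \ref{multtwo}.

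Next comes the counting. From $\Ker\psi=\{(-(b+c),b,c)\mid b\in JH_y[2],\,c\in JH_z[2]\}$ one reads $\deg\psi=4^{2g-2}$, hence $\deg\psi'=4\cdot 4^{2g-2}=4^{2g-1}$, while $\deg m_2=|B[2]|=4^{3g-2}$. Granting the key identity $\ker(\pi_P\circ\psi')=Z$, the maps $\pi_P\circ\psi'$ and $m_2$ are two isogenies out of $B$ with equal kernel, each pulling a principal polarisation back to $4\Theta_B$; the isomorphism theorem then yields a unique polarised isomorphism $p\colon(B,\Theta_B)\ra(P/G_P,\Xi')$ of principally polarised abelian varieties with $p\circ m_2=\pi_P\circ\psi'$. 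The degree $\deg\pi_P=\deg m_2/\deg\psi'=4^{g-1}$ follows at once, and $G_P=\psi'(Z)=\ker\phi_\Xi\cap P[2]\cong\ZZ_2^{2g-2}$ is then maximal isotropic, the analogue of Corollary \ref{3.5}.

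The hard part is exactly the identity $\ker(\pi_P\circ\psi')=Z$, which amounts to $\ker\psi'\subseteq B[2]$. In the étale case of Lemma \ref{multtwo} this was free because $\Ker\psi$ is all $2$-torsion; here a point of $\ker\psi'$ is $(a_1,b,c)$ with $f^*a_1=-(b+c)$, and a priori $a_1$ could have order $4$, since $\ker f^*$ is nontrivial $2$-torsion. What I must prove is that every sum $b+c$ already lies in the subgroup $f^*(JH[2])$, and not merely in the four-times-larger group $JH^*[2]$, so that $a_1$ is forced to be $2$-torsion and, equivalently, $|G_P|=4^{g-1}$ rather than $4^{g}$. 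As a first reduction I would note that $\iota$ acts as $-1$ on $J\tC$, so for $2$-torsion $b\in JH_y\subset\Fix(\t,\iota\s)$ one gets $\iota\s\,b=\s\,b=b$; hence $b$, and likewise $c$, is fixed by both $\s$ and $\t$, placing $b+c$ in $P\cap\Fix(\s,\t)$. It then remains to identify these $\langle\s,\t\rangle$-invariant $2$-torsion classes with $f^*(JH[2])$, which I would settle through the explicit description of the relevant classes as (pullbacks of) differences of Weierstrass points, exactly as in Proposition \ref{gluing}. This $2$-torsion arithmetic forced by the étale covering $\tC\ra H$ is the only genuine obstacle; once it is in place, the argument is formally that of Lemma \ref{multtwo}.
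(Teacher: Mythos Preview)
Your proposal is correct and follows exactly the route the paper intends—the paper itself gives no argument beyond the phrase ``Analogously to Lemma~\ref{multtwo}'', so you have supplied the details the authors omit. Your polarisation computation $\psi'^{\,*}\Xi=4\Theta_B$ and the degree bookkeeping are accurate.

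You have moreover correctly isolated the one place where the analogy is not immediate: since the first factor now passes through the \'etale quotient $f^*\colon JH\to JH^*$ with kernel $V_4\cong\ZZ_2^2$, the inclusion $\ker\psi'\subset B[2]$ (needed for $p$ to be an isomorphism) requires the extra verification that $JH_y[2]+JH_z[2]\subset f^*(JH[2])$ rather than merely $JH^*[2]$. The paper's one-line ``analogously'' does not address this, and your proposed resolution via the Weierstrass-point description in the spirit of Proposition~\ref{gluing} is exactly what settles it: for every Weierstrass point $w$ of $H_y$ (including the one over $[y]$) one checks that $\pi_y^*w\sim\tilde w+\sigma\tilde w+\tau\tilde w+\sigma\tau\tilde w$ coincides with the $f^*$-pullback of the corresponding Weierstrass point of $H$, whence $JH_y[2]\subset f^*(JH[2])$, and similarly for $H_z$.
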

In particular,  $G_P = \Ker(\phi_{\Xi}) \cap  P[2]$. One also computes,
as in Lemma \ref{Intersection}, that $G_P = JH^* \cap JH_y \cap JH_z$. 
 Let $\mathcal{RH}_{g,8}$ denote the moduli space parametrising hyperelliptic Klein coverings over hyperelliptic curves of genus $g-1$ simply ramified in 8 points, so the covering curve is of genus $4g-3$.

\begin{thm} \label{mixed_Prym}
 For $g\geq 2$ the Prym map $$Pr^H_{4g-3,g-1}: \cRH_{g-1, 8} \ra \cA^{\delta}_{3g-1}, \qquad  ([\tC \ra H']) \mapsto (P(\tC, H'),
 \Xi)$$ is injective.
\end{thm}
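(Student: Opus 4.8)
The plan is to construct an explicit inverse to $Pr^{H}_{4g-3,g-1}$, following the strategy of Theorem \ref{injectiveKlein} but taking advantage of the fact that, in the mixed decomposition $P = JH^{*}\boxplus JH_{y}\boxplus JH_{z}$ of Proposition \ref{DecompJC-et}, exactly one of the three factors has strictly larger dimension. Let $(P,\Xi)$ be a polarised abelian variety of type $\delta$ lying in the image of the Prym map. First I would recover the intrinsic subgroup $G_{P}=\Ker(\phi_{\Xi})\cap P[2]$ and form $\pi_{P}\colon P\ra P/G_{P}$. By Lemma \ref{multtwo2} this quotient is principally polarised and polarised isomorphic to a product $JH\times JH_{y}\times JH_{z}$, where $JH$ is the Jacobian of a hyperelliptic curve of genus $g$ and $JH_{y},JH_{z}$ are Jacobians of hyperelliptic curves of genus $g-1$.

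The second step is to recover the three curves. Since the polarisation on $P/G_{P}$ is principal and the Jacobian of a smooth connected curve is indecomposable as a principally polarised abelian variety, the decomposition into indecomposable factors is unique up to reordering. The factor of dimension $g$ is therefore canonically singled out as $JH$, while the two factors of dimension $g-1$ are recovered as the unordered pair $\{JH_{y},JH_{z}\}$. By the Torelli theorem this yields the hyperelliptic curve $H$ together with its map $f_{H}\colon H\ra\PP^{1}$, and the hyperelliptic curves $H_{y},H_{z}$ with their maps $f_{y},f_{z}$.

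The heart of the argument is the reconstruction of the branch configuration in $\PP^{1}$, an analogue of Proposition \ref{gluing} for the triple $H,H_{y},H_{z}$. Using the identity $G_{P}=JH^{*}\cap JH_{y}\cap JH_{z}$, I would show that suitable Weierstrass points of these three curves have a common image in $G_{P}$, which forces a unique automorphism of $\PP^{1}$ simultaneously identifying $2g-1$ of the branch points of $f_{y}$ with $2g-1$ branch points of $f_{z}$; denote these by $[w_{1}],\ldots,[w_{2g-1}]$. The remaining branch point of $f_{y}$ is a point $[y]$ and that of $f_{z}$ a point $[z]$. Since the $2g+2$ branch points of $f_{H}$ must coincide with $\{[w_{1}],\ldots,[w_{2g-1}],[y],[z]\}$ together with one further point, this pins down the distinguished point $[x]$ as the unique Weierstrass value of $H$ that does not occur on $H_{y}$ or $H_{z}$. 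The outcome is a configuration of $2g+2$ points in $\PP^{1}$, up to projective equivalence, carrying a distinguished point $[x]$ and an unordered pair $\{[y],[z]\}$.

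Finally I would invoke an equivalence of data in the spirit of Proposition \ref{equivalence1}, refined by remembering which element of the triple is distinguished: such a marked configuration determines, and is determined by, the base curve $H_{x}$ (the double cover of $\PP^{1}$ branched over $[w_{1}],\ldots,[w_{2g-1}],[x]$), the two conjugate pairs lying over $[y]$ and $[z]$, and the Weierstrass point over $[x]$, hence a unique element of $\cRH_{g-1,8}$; the uniqueness of the mixed Klein subgroup guaranteed by Proposition \ref{prop:cominv} ensures that the associated covering $\tC\ra H_{x}$ is the correct one. The assignment $(P,\Xi)\mapsto[\tC\ra H_{x}]$ is then inverse to the Prym map, proving injectivity. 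I expect the main obstacle to be the gluing step: one must check that the endomorphisms $1-\s+\t-\s\t$ and $1-\s-\t+\s\t$ cutting out $JH_{y}$ and $JH_{z}$ carry Weierstrass points of $\tC$ to the same point of $G_{P}=JH^{*}\cap JH_{y}\cap JH_{z}$, and that the asymmetric presence of the genus-$g$ factor $JH^{*}$, which is an isogenous image of $JH$ rather than an embedded Jacobian, does not obstruct the simultaneous normalisation of the three hyperelliptic coordinates on $\PP^{1}$.
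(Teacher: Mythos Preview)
Your approach is essentially the same as the paper's: both recover $G_P=\Ker(\phi_\Xi)\cap P[2]$, use Lemma~\ref{multtwo2} to split $P/G_P$ as a product of three hyperelliptic Jacobians with the unique genus-$g$ factor singled out, and then reconstruct the $\PP^1$-configuration via a gluing argument based on $G_P=JH^*\cap JH_y\cap JH_z$.

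There is one noteworthy difference and one small slip. The difference concerns the recovery of $[x]$. You read off $[x]$ as the unique branch value of $H$ not appearing among the branch values of $H_y$ or $H_z$, once all three $\PP^1$-coordinates have been aligned through $G_P$. The paper instead exploits the extra structure coming from $JH^*$ being a genuine quotient of $JH$: since $m_2$ on $JH^*$ factors through $\pi_P|_{JH^*}$, one obtains a degree-$4$ isogeny $\beta\colon JH\to JH^*$ whose kernel $V_4\subset JH[2]$ is a Klein four-group; after locating $[y]$ and $[z]$ as the distinguished extra branch values of $H_y,H_z$, the paper identifies $x\in H$ as the unique Weierstrass point with $V_4=\langle\cO_H(x-y),\cO_H(y-z)\rangle$. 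Both methods are valid; yours is a bit cleaner, while the paper's makes visible how the map $JH\to JH^*$ intrinsically records the three special Weierstrass points $x,y,z$ (indeed $\alpha_0(x)=\alpha_0(y)=\alpha_0(z)=0$, so knowing $V_4$ is what tells you which three Weierstrass points of $H$ to set aside before gluing).

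The slip: your appeal to Proposition~\ref{prop:cominv} for ``uniqueness of the mixed Klein subgroup'' is misplaced. That proposition gives a unique Klein subgroup whose non-trivial elements are all fixed-point-free (genus $4k+1$) or all have fixed points (genus $4k+3$); in $\langle\iota,\sigma,\tau\rangle\subset\Aut(\tC)$ there are in fact three ``mixed'' Klein subgroups, namely $\langle\sigma,\iota\tau\rangle$, $\langle\tau,\iota\sigma\rangle$, $\langle\sigma\tau,\iota\sigma\rangle$, with quotients $H_x,H_y,H_z$ respectively. This does not damage your argument, since once you have singled out $[x]$ (hence $H_x$) the covering $\tC\to H_x$ is already determined by the marked configuration, as in the corollary stated right after the theorem.
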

\begin{proof}
The proof is very similar to that one of Theorem \ref{injectiveKlein}.
Consider $(P, \Xi)$ a polarised abelian variety in the image of 
$Pr^H_{4g-3,g-1}$. The subgroup $G:= P[2] \cap \Ker \phi_{\Xi}$ is isotropic and according to Lemma \ref{multtwo2} the quotient is isomorphic, as 
principally polarised abelian varieties, to a product  $JH_1\times JH_2 \times JH_3$ of uniquely determined Jacobians $JH_1, JH_2$ of dimension $g-1$ and one Jacobian $JH_3$, of dimension $g$.
 Without loss of generality and keeping the notation as above, we set $JH_1=JH_y$ and $JH_2= JH_z$ for some points $[y],[z] \in \PP^1$ and set $H:=H_3$.
 
 Consider the image $JH^*$ of $JH$ in $P$. The map $m_2: JH^* \ra JH^*$, multiplication by 2, factors through the restriction to $JH^*$ of the quotient map $\pi_P: P \ra P/G$, since $\Ker \pi  \subset \Ker m_2$.
\begin{equation} 
\xymatrix@R=1.1cm@C=.9cm{
JH^*   \ar[rr]^(.5){m_2} \ar[dr]_{\pi_P{_{|JH^*}}} 
& & JH^* \\
  &JH \ar[ur]_{4:1}^{\beta}   &
}
\end{equation}
Let $V_4:= \Ker \beta$, which is a subgroup of $JH$ generated by two 2-torsion points. 
Let $\pi_j : \tC \ra H_j$ the $4:1$ branched maps for $j\in\{y,z\}$ and 
$\pi_0: \tC \ra H$ the \'etale $4:1$ map (see Diagram \ref{diagetale}). Consider the map $k: \Pic^4(\tC) \ra \Pic^0(\tC)$  defined as before. So the 
maps $\alpha_j:=k \circ \pi_j^* $ are injective for $j\in\{x,y\}$ and 
$$
\alpha_0:=k \circ \pi_0^*: \Pic^1(H) \ra \Pic^0(\tC) \simeq J\tC
$$
has in its kernel 3 Weierstrass points (i.e. its pullback under $\pi_0$ is $ \sim 2g_2^1$), whose differences generate $V_4$. The argument in Proposition \ref{gluing} shows that 
$$
\alpha_y(\Pic^1 H_y) \cap \alpha_z(\Pic^1 H_z) \cap \alpha_0(\Pic^1 H) 
= JH_y \cap  JH_z \cap JH = G.
$$
Therefore, one can find a suitable automorphism of $\PP^1$ such that the images of the hyperelliptic maps from $H_j$ and $H$ consist of $2g-1$ points in $\PP^1$ and 
their Weierstrass points have $G$ as common image in $P \subset J\tC$. 
In order to determine the remaining point $[z]\in \PP^1$, we consider the two Weierstrass points 
$y,z\in H$ above $[y],[z]\in \PP^1$, whose images 
are not in $G$.  Then there is a unique point $x\in H$
(necessarily a Weierstrass point), 
such that $V_4=\langle \cO_H(x-y), \cO_H(y-z) \rangle$. 
One recovers the base curve $H':= H_x$ as the only hyperellitptic curve branched in $[x], [w_1], \ldots, [w_{2g-1}]\in \PP^1$. 
Applying the equivalence $(2) \Leftrightarrow (3)$ of Proposition \ref{equivalence1} one recovers the Klein covering $\tC \ra H'$ (see Figure \ref{fig:hyperelliptic_covers}).
\end{proof}

\begin{cor}
The following data are equivalent:
\begin{itemize}
\item[(1)] a triple $(W,W',B)$ of disjoint sets of points in $\PP^1$ such that $W$ is of cardinality 2g-1, $W'$ is a pair of points and $B$ is a point (up to a projective equivalence respecting the sets);
\item[(2)] a hyperelliptic genus $4g-3$ curve with a choice of a Klein subgroup of involutions $\langle \sigma,\iota\tau\rangle$, where $\s$, $\t$ are fixed point free;
\item[(3)] a hyperelliptic genus $g-1$ curve together with a 
Weierstrass points $x$ and two pairs  of points $y,\iota y$
and $z,\iota z$.
\end{itemize}
\end{cor}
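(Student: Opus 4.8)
The plan is to route both equivalences through the datum $(1)$, exactly as the \'etale corollary following Proposition \ref{equivalence1} was reduced to that proposition; since $(1)\Leftrightarrow(2)$ and $(1)\Leftrightarrow(3)$ give $(2)\Leftrightarrow(3)$ by transitivity, it suffices to prove these two.

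For $(1)\Leftrightarrow(3)$ I would use the standard hyperelliptic dictionary. Given $(W,W',B)$, let $H_x$ be the hyperelliptic curve of genus $g-1$ whose hyperelliptic covering of $\PP^1$ ramifies exactly over the $2g$ points of $W\cup B$; the point above $B$ is the distinguished Weierstrass point $x$, the points above $W$ are the remaining $2g-1$ Weierstrass points, and the fibres over the two points of $W'$ are the pairs $\{y,\iota y\}$ and $\{z,\iota z\}$, which are genuine two-point fibres precisely because $W'$ is disjoint from the branch locus $W\cup B$. Conversely, pushing $x$, the other Weierstrass points, and the two pairs forward along $H_x\to\PP^1$ recovers $B,W,W'$; disjointness is automatic since a conjugate pair is never Weierstrass, and a projective equivalence respecting the three sets is the same as an isomorphism of the pointed curve. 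The numerology checks out: $|W\cup B|=2g$ is the number of branch points of a genus $g-1$ hyperelliptic curve, and $|W|+|W'|+|B|=2g+2$.

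For $(1)\Leftrightarrow(2)$ I would observe that $(1)$ is nothing but the datum $(4)$ of Proposition \ref{equivalence1} --- a configuration of $2g+2$ points of $\PP^1$ with a distinguished triple $W'\cup B$ --- together with a splitting of that triple into the single point $B$ and the pair $W'$. Applying the equivalence $(2)\Leftrightarrow(4)$ of that proposition produces the curve $\tC$ of genus $4g-3=4(g-1)+1$ with $\ZZ_2^3\subset\Aut(\tC)$; by Proposition \ref{prop:cominv} the unique fixed-point-free Klein subgroup $\langle\s,\t\rangle$ is canonical, and the distinguished triple corresponds to its three nonzero (free) involutions $\s,\t,\s\t$ through the intermediate double covers $C_x=\tC/\s$, $C_y=\tC/\t$, $C_z=\tC/\s\t$. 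Singling out $B$ then singles out one of these, say $\s$, and I would check that $\s$ determines a unique Klein subgroup of $\ZZ_2^3$ not containing $\iota$ and having $\s$ as its only fixed-point-free involution, namely $\langle\s,\iota\t\rangle=\{1,\s,\iota\t,\iota\s\t\}$; this is exactly the mixed subgroup and base curve $H_x$ appearing in the discussion preceding Theorem \ref{mixed_Prym}. These two assignments are mutually inverse and respect the equivalence relations, so $(1)\Leftrightarrow(2)$.

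The only step that is not purely formal is this last matching of a fixed-point-free involution with a mixed Klein subgroup, and the main obstacle is the bookkeeping inside $\ZZ_2^3=\langle\iota,\s,\t\rangle$. Concretely one must verify, at the level of the $2$-dimensional subspaces of $\FF_2^3$ avoiding $\langle\iota\rangle$, that there are exactly four Klein subgroups not containing $\iota$, that three of them are mixed (one free involution together with two involutions having fixed points, by the fixed-point count of Proposition \ref{prop:involutions}), and that the map sending a mixed subgroup to its free involution is a bijection onto $\{\s,\t,\s\t\}$. Once this finite check is in place the correspondence is forced, and no input beyond Section 3 and Proposition \ref{equivalence1} is required.
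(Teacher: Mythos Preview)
Your argument is correct and matches the paper's approach. The corollary is stated without proof in the paper, being an immediate consequence of the material in \S4 (especially Proposition \ref{equivalence1} and the discussion preceding Theorem \ref{mixed_Prym}); your route through datum $(1)$ via the hyperelliptic dictionary for $(1)\Leftrightarrow(3)$ and via Proposition \ref{equivalence1} plus the $\ZZ_2^3$ bookkeeping for $(1)\Leftrightarrow(2)$ is exactly the intended one, including the observation that the choice of a mixed Klein subgroup amounts to singling out one of the three free involutions $\s,\t,\s\t$.
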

 
\begin{rem}
Although from our point of view, the mixed case is less natural, it can be seen as the starting point of \cite{CMS}.
\end{rem}

\section{Branched $\ZZ_2^2$-coverings over hyperelliptic curves}
\subsection{From the top curve.} Let $\tC$ be a hyperelliptic curve of genus $4g+3$ admitting a subgroup of automorphisms
generated by three commuting involutions, namely  $\s, \t $ and the hyperelliptic involution $\iota$. By Proposition \ref{prop:cominv} 
we have 
$$
|\Fix (\t)| = |\Fix (\s)| = |\Fix (\iota\s\t)| = 0, \qquad |\Fix (\s\t)| = |\Fix (\iota\s)| = |\Fix (\iota\t)| = 4.
$$
For the convenience of the reader, we write $\alpha \in \{ \s,\t, \iota\s\t \}$ and $\beta \in \{ \iota\s,\iota\t, \s\t \}$.
By $C_{\alpha}=\tC/\al$, we denote the quotient curves of genus 2g+2 and $T_{\beta}$ the quotient curves of genus 2g+1. 
We have the following commutative diagrams
\begin{equation} \label{diag5}
\xymatrix@R=.9cm@C=.6cm{
&& \tC \ar[dr]^{et} \ar[dl]_{et} &&  \\
& C_{\t}\ar[dr]_{+2} \ar[dl]_{+2} & & C_{\s}\ar[dr]_{+2} \ar[dl]_{+2} & \\
H_{\t,\iota\s}\ar[drr] && H_{\s,\s\t} \ar[d] && H_{\s,\iota\t}\ar[dll] \\
&& \PP^1 &&
}
\qquad
\xymatrix@R=.9cm@C=.6cm{
& \tC \ar[dr]^{+4} \ar[dl]_{+4} \ar[d]^{+4} & \\
T_{\s \t}\ar[dr]_{+4} &  T_{\iota \t} \ar[d]_{+4} & T_{\iota\s} \ar[dl]^{+4}\\
&E \ar[d]& \\
&\PP^1&
}
\end{equation}
where $H_{\alpha,\beta}$ is the genus $g+1$ curve quotient of $\tC$ by the subgroup $\langle \alpha, \beta \rangle$ 
and $E$ is the quotient of $\tC$  by $\langle \iota \s, \iota \t \rangle$ which is the unique quotient curve of genus $g$ in the tower. Here $+4$, respectively 
$+2$, denotes a 2:1 map branched in 4, respectively 2 points.  By Corollary \ref{hypquot}, all the positive genus curves in both diagrams
are hyperelliptic.  

In order to describe the images of the Jacobians of the quotient curves in $J\tC$, we analyse the behaviour of the 2-torsion points under the pull-back maps. According to \cite[Prop 11.4.3]{bl}, $JT_\beta$ and $JE$ are embedded in $J\tC$ whereas the image of $JC_\al$ is not, so its image will be denoted by $JC_\al^*$.
Moreover, Diagram \eqref{diag6} shows that $JH_{\al,\beta}$ is not embedded in $J\tC$, so we will use the notation $JH_{\al,\beta}^*$ for its image.
\begin{equation} \label{diag6}
\xymatrix@R=.7cm@C=.7cm{
& \tC \ar[dr]^{+4} \ar[dl]_{et} & \\
C_\al \ar[dr]_{+2} & & T_{\beta} \ar[dl]^{et}\\
& H_{\al,\beta} &
}
\end{equation}

By construction, $JT_\beta=\im(1+\beta)$, $JE=\im(1-\s-\t+\s\t)$.
Moreover, we have that $JH^*_{\t,\iota\s}=\im(1-\s+\t-\s\t),\  JH^*_{\s,\s\t}=\im(1+\s+\t+\s\t),\ JH^*_{\s,\iota\t}=\im(1+\s-\t-\s\t)$.

One can easily compute that 
$$(1-\s+\t-\s\t)+(1+\s+\t+\s\t)+(1+\s-\t-\s\t)+(1-\s-\t+\s\t)=4$$
which shows that $J\tC=JE\boxplus JH^*_{\t,\iota\s}\boxplus JH^*_{\s,\iota\t}\boxplus JH^*_{\s,\s\t}$. 
As a result we get that $$P(\tC/E)=JH^*_{\t,\iota\s}\boxplus JH^*_{\s,\iota\t}\boxplus JH^*_{\s,\s\t}.$$

\subsubsection{2-torsion points on Jacobians}
We shall describe the images of the 2-torsion points in $J\tC$ of the Jacobians of the quotient curves.
We start by recalling well-known results concerning 2-torsion points on hyperelliptic Jacobians.
\begin{lem}
\label{2torsionlemma}
Let $W=\{w_1,\ldots,w_{2g+2} \}$ be the set of Weierstrass points on a hyperelliptic curve $C$ of genus $g$. Let $S=\{s_1,\ldots,s_{2k}\} \subset\{1,\ldots,2g+2\}$ be a set of even cardinality with $2k\leq g+1$. By $S^c$ we denote the complement of $S$ in $\{1,\ldots,2g+2\}$. We consider degree 0 divisors as elements of $JC=Pic^0(C)$ hence equality means linear equivalence of divisors. 
\begin{enumerate}
\item For all $i,j\leq 2g+2$, we have $w_i-w_j=w_j-w_i\in JC[2]$. Consequently, a divisor $D=\sum_{s\in S} \pm w_s$ does not depend on a particular choice of pluses and minuses, as long as its degree equals 0.
\item We have the  equality $\sum_{s_i\in S} \pm w_s=\sum_{t\in S^c} \pm w_t$, as long as degrees of both sides equal 0. 
\end{enumerate}
From now on, we will write $P_S=\sum_{i=1}^k(w_{s_{2i}}-w_{s_{2i-1}})$ to have degree 0 automatically and hence make notation more consistent. 
    \begin{enumerate}\setcounter{enumi}{2}
    \item If $g$ is even then for every $P\in JC[2], P\neq 0$ there exists a  unique $S$ of even cardinality with $|S|\leq g+1$ such that $P=P_S$. 
    \item If $g$ is odd then for every $P\in JC[2], P\neq 0$ there exists a  unique $S$ of even cardinality with $|S|\leq g$ such that $P=P_S$ or unique complementary pair $S,S^c$ of cardinality $g+1$ such that $P_S=P_{S^c}$.
\end{enumerate}
\end{lem}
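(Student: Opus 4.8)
This is a classical result about the 2-torsion of a hyperelliptic Jacobian, and the plan is to derive everything from the single structural fact that on a hyperelliptic curve $C$ the hyperelliptic divisor $h_C$ satisfies $2w_i \sim h_C$ for every Weierstrass point $w_i$ (equivalently, $w_i + \iota w_i \sim h_C$ and each $w_i$ is fixed by $\iota$). I plan to carry out the four items in order, since each builds on the previous ones.

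For item (1), I would start from $2w_i \sim h_C \sim 2w_j$, so that $2(w_i - w_j) \sim 0$, giving $w_i - w_j \in JC[2]$; symmetry $w_i - w_j = -(w_j - w_i) = w_j - w_i$ is then immediate because the class is 2-torsion. The independence of $D = \sum_{s \in S} \pm w_s$ from the choice of signs follows because changing the sign of a single term $w_s$ alters $D$ by $\pm 2w_s \sim \pm h_C$, and to keep $\deg D = 0$ one must flip signs in pairs, each pair changing the class by a multiple of $h_C$ that can be reabsorbed; more cleanly, any two sign choices of the same even-size $S$ differ by an element of the form $\sum 2w_s \sim (\text{integer})\,h_C$, which is trivial in $\Pic^0$ once the degree is fixed to zero. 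For item (2), the relation $\sum_{i=1}^{g+1} w_i$ over \emph{all} Weierstrass points is the canonical-type relation $\sum_{s} w_s \sim (g+1) h_C$ coming from $\iota$; subtracting the full sum shows $P_S$ and $P_{S^c}$ differ by a multiple of $h_C$ and hence agree in $JC[2]$ once degrees match. This is the algebraic heart and the step I expect to be the main obstacle, since the bookkeeping of degrees and the precise multiple of $h_C$ must be handled carefully to land exactly at linear equivalence rather than merely numerical equivalence.

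For items (3) and (4) I would switch to a counting argument. The map $S \mapsto P_S$ sends even-cardinality subsets of $\{1,\dots,2g+2\}$ to $JC[2]$, and by (1)–(2) it is well defined on the quotient by the two relations $S \sim S^c$ and the empty-set/full-set collapse. The standard fact (see \cite[Chapter III]{acgh}) is that this induces a bijection between $JC[2] \cong \ZZ_2^{2g}$ and the set of even subsets modulo complementation. I would then separate the parity cases: when $g$ is even, every nonzero class has a unique representative $S$ with $|S| \le g+1$ because complementation strictly changes the cardinality across the threshold $g+1$ (no self-complementary $S$ of the forbidden size exists), yielding item (3); when $g$ is odd, the only ambiguity occurs at the self-complementary size $|S| = |S^c| = g+1$, where $P_S = P_{S^c}$ forces the two-representative statement, giving item (4). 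The obstacle here is purely combinatorial — verifying that the cardinality bound $|S| \le g+1$ (resp. $\le g$) picks out exactly one representative per class — and is dispatched by comparing the number of such subsets, $2^{2g}$ in each case, against $|JC[2]| = 2^{2g}$.
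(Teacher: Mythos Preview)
Your outline is correct and follows the standard argument; there is essentially nothing to compare it against, since the paper does not give its own proof of this lemma but simply refers the reader to \cite{D}. One small slip: in your discussion of item~(2) you write ``$\sum_{i=1}^{g+1} w_i$ over all Weierstrass points'' where you clearly mean $\sum_{i=1}^{2g+2} w_i \sim (g+1)h_C$; with that corrected, your derivation $P_S + P_{S^c} \sim \sum_i w_i - (g+1)h_C \sim 0$ (hence $P_S = -P_{S^c} = P_{S^c}$) goes through, and the counting for (3)--(4) is exactly the right way to finish.
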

\begin{proof}
A proof can be found in any of  \cite{D}. 
\end{proof}


In the case of $\tC$, we number the Weierstrass points as follows. Start with any Weierstrass point and call it $w_1$. Then we denote $\s w_1, \t w_1, \s\t w_1$ the other 3 Weierstrass points in the fibre of the map $\tC \ra H_{\sigma, \sigma\tau}$. Note that the four Weierstrass points are indeed different because $\s,\t$ are fixed point free and $\s\t$ have fixed points outside the set of Weierstrass points. Then proceed in the same way with the rest of the Weierstrass points. Since there are $2(4g+3)+2$ of them on $\tC$, in the end we will get \textit{the following numbering $W^{\tC}=\{w_1, \s w_1, \t w_1, \s\t w_1, w_2, \s w_2,\ldots,\t w_{2g+2},\s\t w_{2g+2}\}$ of Weierstrass points of $\tC$.} One has to be aware that we made a particular choice, however in Remark \ref{renumbering}, we will show that a different choice will only result in a permutation of indices and will not affect the results obtained.

For convenience, a Weierstrass point of $\tC$ when written in brackets will denote its image 
in the corresponding quotient curve. We start writing down 2-torsion points of $JE$. Since $E=\tC/\langle \iota\s,\iota\t\rangle $, on can easily check that $W^E=\{[w_1],\ldots,[w_{2g+2}]\}$. Setting  $e_i:=w_i+\s w_i+ \t w_i+ \s\t w_i$ and considering $JE$ as a subvariety of $J\tC$, one checks that 
\begin{equation}\label{generators}
\{ e_i-e_j :1\leq i<j\leq 2g+2\}\subset JE[2].
\end{equation}

Note that $e_i-e_j$ represents $[w_i]-[w_j]\in JE$, so Lemma \ref{2torsionlemma} shows that the sums of up to $\frac{g+1}{2}$ elements with disjoint indices give, on one hand,  different points of $J\tC$ and on the other, represent all possible 2-torsion points on $JE$. Therefore, we get that $JE[2]$ is generated by \eqref{generators}.

For the description of the 2-torsion points of $JT_\beta$, we will compute explicitly one case, namely $T:= T_{\s\t}=\tC/\s\t$. In this case, one checks that $W^T=\{[w_1],[\t w_1],\ldots,[w_{2g+2}],[\t w_{2g+2}]\}$. Again, denote by $v_i=w_i+\s\t w_i$ and $v_i'=\t w_i+\s w_i$, so the 2-torsion points of $JT$ are generated by $$\langle  v_i-v_j,v_i'-v_j,v_i-v_j', v'_i-v'_j :1\leq i<j\leq 2g+2\rangle$$
considered as embedded in $J\tC[2]$.
Indeed, using Lemma \ref{2torsionlemma} one checks that, by adding up to at most $2g+1$ generators, we generate all $2^{4g+2}$ 2-torsion points of the image of $JT$.

For the computation of $JH_{\al,\beta}^*[2]$ one has to take into account that half of the 2-torsion points on the quotient come from the 4-torsion points of $JH_{\al,\beta}$. We will compute explicitly the points on $JH^*_{\s,\s\t}[2]$ (and we will denote the curve by $H$). Note that $H$ is a quotient of $T$ (see Diagram \eqref{diag6}) hence we can use $v_i,v_i'$ to represent 2-torsion points on $JH$. Set $H:= H_{\s,\s\t}$.

Recall that the map $\pi:T\ra H=T/\s$ is an \'etale
double covering of hyperelliptic curves of genera $2g+1$ and $g+1$ as illustrated in Figure \ref{fig:branched_hyp_covers}. The set of Weierstrass points of $H$ equals $W^H=\{[w_1],\ldots,[w_{2g+2}],u_{2g+3},u_{2g+4}\}$ and
the covering is defined by a two torsion point $u_{2g+3}-u_{2g+4}\in JH[2]$. In particular, 
$\pi^{*}(u_{2g+3}-u_{2g+4})=0$.

Set $u_i:=v_i+v_i'$ and write the following subgroup of $JH^*[2]$:
$$U:=\left< u_i-u_j :1\leq i<j\leq 2g+2\right>.$$
The same computation as for $JE$ shows that the subgroup $U$ has $2^{2g}$ elements, so it is $25\%$ of all 2-torsion points of $JH^*$.
We can write the preimages $\pi^{-1}(u_{2g+3})=x+\iota x \text{ and } \pi^{-1}(u_{2g+4})=y+\iota y$, for some $x,y\in T$. 
Now, we are able to represent the preimage of $\pi^{*}([w_i]-u_{2g+3})$ as: $$\pi^{*}([w_i]-u_{2g+3})=u_i-g^1_2=v_i-v'_i\in JT.
$$
Note that, together with $U$, we can represent all these 2-torsion points as $\{P_S=\sum_{i\in S}v_i-v_i': |S|<g+1\}$, (where $S$ can be also of odd cardinality), so we get $2^{2g+1}$ points which represent $50\%$ of the 2-torsion points on $JH^*$, precisely the points that come from the 2-torsion points on $JH$.

The trickiest part is to represent 2-torsion points in $JH^*$ that come from 4-torsion points in $JH$, say $R$, satisfying $2R=u_{2g+3}-u_{2g+4}$. There are precisely $2^{2g+1}$ of them.
We will use the fact from Lemma \ref{2torsionlemma} that 
$$
u_{2g+3}-u_{2g+4}=\sum_{i\leq g+1}[w_{2i}]-[w_{2i-1}]\ \textnormal{as divisors in } JH.
$$
Now, $\pi^{-1}([w_{i}])=\{[w_i],[\t w_i]\}\subset T$. Since $v_i\in \Pic^2(\tC)$ represents $[w_i]$ and $v'_i$ represents $[\t w_i]$, we set $Q_i\in \{v_i,v'_i\}$
and  define $Q=\sum_{i\leq g+1}Q_{2i}-Q_{2i-1}$. There are precisely $2^{2g+2}$ of such representations with the relation 
$$
Q=Q' \quad \Leftrightarrow \quad \forall_j (Q_j=Q_j')\ \textnormal{ or } \ \forall_j (Q_j\neq Q'_j).$$ 
Therefore, we have defined $2^{2g+1}$ points in $JT[2]$ (seen as embedded in $J\tC[2]$). In order to show that indeed $\pi^{*}(R)=Q$ it is enough to note the following facts.
Firstly, the cardinalities of both sets (of possible Q's and possible $\pi^{*}(R)$'s) are equal to $2^{2g+1}$. Secondly, for every $Q$ we have that $\Nm_\pi(Q)=u_{2g+3}-u_{2g+4}$. Thirdly, by definition, we have that $\Nm_{\pi}\circ\pi^*=m_2$, where $m_2$ is the multiplication by 2, therefore
$$
\Nm_\pi(Q)=u_{2g+3}-u_{2g+4}=2R=\Nm_{\pi}(\pi^*(R))$$ 
and lastly, $\pi^*(R)$ are 2-torsion points and $Q$ are the only 2-torsion points of $JT$ with the property that $\Nm_\pi(Q)=u_{2g+3}-u_{2g+4}$.


In the following proposition we compile the analogous results for all other curves $JH_{\al,\beta}^*$.
\begin{lem}\label{descrip2tor}
We have the following generators of subgroups of 2-torsion points as embedded in $J\tC[2]$:
\begin{align*}
    JE[2]&=\left<(w_i+\s w_i+\t w_i+\s\t w_i)-(w_j+\s w_j+\t w_j+\s\t w_j):\ 1\leq i<j\leq 2g+2\right>,\\
    JH_{\s,\s\t}^*[2]&=JE[2]+\left<w_i+\s\t w_i-\t w_i-\s w_i: \ 1\leq i\leq 2g+2\right>+\\&+\left<\sum_{k\leq g+1}(Q_{2k}-Q_{2k-1}): Q_i\in\{w_i+\s\t w_i,\s w_i+\t w_i\}\right>, \\
    JH_{\t,\iota\s}^*[2]&=JE[2]+\left<w_i+\t w_i-\s w_i-\s\t w_i: \ 1\leq i\leq 2g+2\right>+\\&+\left<\sum_{k\leq g+1}(Q_{2k}-Q_{2k-1}): Q_i\in\{w_i+\s w_i,\t w_i+\s\t w_i\}\right>, \\
    JH_{\s,\iota\t}^*[2]&=JE[2]+\left<w_i+\s w_i-\t w_i-\s\t w_i: \ 1\leq i\leq 2g+2\right>+\\&+\left<\sum_{k\leq g+1}(Q_{2k}-Q_{2k-1}): Q_i\in\{w_i+\t w_i,\s w_i+\s\t w_i\}\right>, \\
    JT_{\s\t}[2]&=JE[2]+\left<(w_i+\s\t w_i)-(\s w_j+\t w_j), (w_i+\s\t w_i)-(w_j+\s\t w_j):\ 1\leq i,j\leq 2g+2\right>, \\
      JT_{\iota\s}[2]&=JE[2]+\left<(w_i+\s w_i)-(\s\t w_j+\t w_j), (w_i+\s w_i)-(w_j+\s w_j):\ 1\leq i,j\leq 2g+2\right>, \\
JT_{\iota\t}[2]&=JE[2]+\left<(w_i+\t w_i)-(\s\t w_j+\s w_j), (w_i+\t w_i)-(w_j+\t w_j):\ 1\leq i,j\leq 2g+2\right>.
\end{align*}
\end{lem}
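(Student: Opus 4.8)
The plan is to observe that the six subgroups fall into two families, the three curves $JT_\beta$ and the three curves $JH^*_{\al,\beta}$, and that within each family a single computation governs every case. A representative of each family—namely $JE[2]$ and $JT_{\s\t}[2]$ on one side, and $JH^*_{\s,\s\t}[2]$ on the other—has already been carried out in the paragraphs preceding the statement. I would obtain the remaining cases by relabelling, using that the three involutions $\s\t,\iota\s,\iota\t$ with fixed points (equivalently the three fixed point free involutions $\s,\t,\iota\s\t$) play interchangeable roles in the tower of Diagram \eqref{diag5}.

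First I would settle the two outstanding $T$-curves. For $\beta\in\{\iota\s,\iota\t\}$ the map $\tC\ra T_\beta$ is a double cover branched at the four fixed points of $\beta$, and $\beta$ acts freely on the Weierstrass points of $\tC$, splitting each fibre $\{w_i,\s w_i,\t w_i,\s\t w_i\}$ into two pairs: for $\beta=\iota\s$ into $\{w_i,\s w_i\}$ and $\{\t w_i,\s\t w_i\}$, and for $\beta=\iota\t$ into $\{w_i,\t w_i\}$ and $\{\s w_i,\s\t w_i\}$. Setting $v_i,v_i'$ to be the two pull-back representatives accordingly, the computation of $JT_{\s\t}[2]$ transcribes verbatim; one invokes Lemma \ref{2torsionlemma} to see that adding at most $2g+1$ of the listed generators already produces all $2^{4g+2}$ two-torsion points of the image of $JT_\beta$. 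No new input is needed here.

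Next I would treat the two outstanding $H^*$-curves. The structural point is that $H_{\t,\iota\s}=T_{\iota\s}/\bar\t$ and $H_{\s,\iota\t}=T_{\iota\t}/\bar\s$ are \'etale quotients of the corresponding $T_\beta$, exactly as $H_{\s,\s\t}=T_{\s\t}/\bar\s$ in Diagram \eqref{diag6}; here $\bar\al$ is the involution induced by $\al$, fixed point free since the cover $T_\beta\ra H_{\al,\beta}$ is \'etale. Hence the three-layer analysis used for $JH^*_{\s,\s\t}[2]$ applies unchanged. The bottom layer $JE[2]$ is common to all three, because $u_i:=v_i+v_i'=e_i$ independently of the pairing. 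The middle (fifty per cent) layer is generated by the classes $v_i-v_i'$, each of which is the two-torsion point $P_S$ attached to the full quadruple $S=\{w_i,\s w_i,\t w_i,\s\t w_i\}$; by Lemma \ref{2torsionlemma}(1) this point does not depend on the distribution of signs, so the three expressions written in the statement represent the same classes and the middle layer is in fact shared. The top layer consists of the $2^{2g+1}$ classes $Q=\sum_{k\le g+1}(Q_{2k}-Q_{2k-1})$ arising from the $4$-torsion of $JH_{\al,\beta}$, and only here do the three curves differ, through the pairing-dependent definition of the pairs $Q_i$ recorded in the statement.

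The main obstacle is precisely this top layer. For each of the subgroups $\langle\t,\iota\s\rangle$ and $\langle\s,\iota\t\rangle$ I must correctly locate the two extra Weierstrass points $u_{2g+3},u_{2g+4}$ of $H_{\al,\beta}$—the images of the four fixed points of $\beta$ under the free action of $\bar\al$—and identify $u_{2g+3}-u_{2g+4}$ as the two-torsion class defining the \'etale cover $T_\beta\ra H_{\al,\beta}$, so that its pull-back to $JT_\beta$ vanishes. Only after this is the norm characterisation available: combining $\Nm_\pi(Q)=u_{2g+3}-u_{2g+4}$ with $\Nm_\pi\circ\pi^*=m_2$ and a count of cardinalities, one matches the $Q$'s with the classes $\pi^*(R)$, $2R=u_{2g+3}-u_{2g+4}$, and thereby pins down the top layer. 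With the correct pairing the argument closes verbatim, which is exactly the content recorded by the stated generators.
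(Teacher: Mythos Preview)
Your proposal is correct and follows the same approach as the paper: both argue that the detailed computations carried out for $JE$, $JT_{\s\t}$ and $JH^*_{\s,\s\t}$ just before the lemma transfer to the remaining cases by relabelling, since the three involutions with fixed points (equivalently the three fixed-point-free ones) play symmetric roles in the tower. The paper's proof is a two-sentence remark to this effect, pointing only to the change in the definition of the $Q_i$; you give a more explicit account of why the symmetry works and in particular observe (correctly, via Lemma~\ref{2torsionlemma}(1)) that the middle layers of the three $JH^*$-groups coincide as subgroups of $J\tC[2]$, so only the top layer distinguishes them.
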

\begin{proof}
The proof is completely analogous to what we have done for $JH^*_{\s,\s\t}$. One only needs to change the subscripts depending on the involution by which it is divided. For 
example, for $H_{\s,\iota\t}$, the only involution with fixed points is $\iota\t$, so $Q_i\in\{w_i+\t w_i,\s w_i+\s\t w_i\}$.
\end{proof}


\begin{rem}\label{renumbering}
Note that a different choice  of Weierstrass points $w_1,\ldots,w_{2g+2}\in \tC$ will only result in a permutation of indices because all sets of generators are invariant under $\langle \s,\tau\rangle$. In particular, the statement of Lemma \ref{descrip2tor} does not depend on the numbering of the Weierstrass points we started with.
\end{rem}

\subsection{From the bottom curve}
In order to construct a $\ZZ_2^2$-branched covering of hyperelliptic curves, one only needs to choose  2g+5 points in $\PP^1$ with a distinguished triple, denoted by $[w_1],\ldots,[w_{2g+2}], [x],[y],[z]$. Then, the curve $E$ is  the hyperelliptic curve that is a double cover of $\PP^1$ branched in $[w_1],\ldots,[w_{2g+2}]$ points. The curve $T_x$ will be the double cover of $E$ branched at $y_E,\iota y_E, z_E,\iota z_E$ with the corresponding line bundle being the hyperelliptic $g^1_2$. According to Proposition \ref{hyp-cov},  $T_x$ is hyperelliptic. Then the preimages of $x_E$ become $x_T,x'_T$ and therefore one chooses $x_T,x'_T,\iota x_T, \iota x'_T$ as a branching for the map $\tC\ra T_x$ with the line bundle being $g^1_2$. By construction (and Proposition \ref{hyp-cov}), $\tC$ is a hyperelliptic curve of genus $4g+3$. The covering involution of the map $T_x\ra E$ lifts to $\tC$, hence $\tC$ is a hyperelliptic curve with two (additional) commuting involutions.

Moreover, the construction is uniquely defined up to a projective equivalence. To see this, one can directly construct the curve $\tC$ as follows. Consider a $\ZZ_2\times\ZZ_2$-covering of $\PP^1$ branched in $[x],[y],[z]$ with two simple ramifications on each fiber 
(the existence is shown in Remark \ref{p1}). By Hurwitz formula the covering curve is of genus 0. Moreover, there are $8g+8$ points over $[w_1],\ldots,[w_{2g+2}]$ and there is an action of $\ZZ_2\times\ZZ_2$ on them.
Then $\tC$ is constructed as the double cover branched in these $8g+8$  points. 
Since the set of branching points is invariant under the action of $\ZZ_2\times\ZZ_2$ the curve $\tC$ possesses two commuting involutions. In this way, we constructed all the maps of the following commutative diagram.
\begin{equation} \label{Const_tildeC}
\xymatrix@R=.9cm@C=1cm{
 \tC \ar[d]_{2:1} \ar[r]^{4:1} & E \ar[d]^{2:1} \\
\PP^1 \ar[r]^{4:1} & \PP^1 
}\end{equation}

\begin{rem}\label{p1}
An example of such a covering of the projective line can be described as $[x:y]\ra[x^4+y^4:2x^2y^2]$ with the branching points $[1:0],[1:-1],[1:1]$. The deck transformations are explicitly described as $[x:y]\ra[-x:y]$ and $[x:y]\ra [y:x]$.
\end{rem}

We finish this section by showing the equivalence of data needed to built a branched $\ZZ_2^2$ covering.

\begin{prop} \label{equivalence}
For $g\geq 1$, the following data are equivalent:
\begin{enumerate}
    \item $2g+5$ points in $\PP^1$ with a distinguished triple up to a projective transformation;
    \item a genus $4g+3$ hyperelliptic curve $\tC$ with 2 commuting involutions;
    \item a genus $g$ hyperelliptic curve $E$ with three pairs of points $x,\iota x, y, \iota y, z,\iota z$ up to an isomorphism;
    \item 3 genus $g+1$ hyperelliptic curves $H_x$, $H_y, H_z$ that have branching points that can be glued to get a set of $2g+5$ points with each of 3 distinguished points shared by precisely 2 curves. 
\end{enumerate}
\end{prop}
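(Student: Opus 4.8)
The plan is to treat $(1)\Leftrightarrow(3)\Leftrightarrow(4)$ as elementary translations of one and the same combinatorial datum through the hyperelliptic dictionary, and to extract the genuine content from $(2)\Leftrightarrow(3)$ using the bottom-up tower constructed in this subsection together with Propositions \ref{prop:cominv} and \ref{hyp-cov}. Throughout I would keep careful track of the three equivalence relations that must be matched: projective transformations fixing the distinguished triple in (1), isomorphisms of pointed hyperelliptic curves in (3), and isomorphisms of the Galois covering in (2).

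For the combinatorial equivalences, write the $2g+5$ points as $[w_1],\dots,[w_{2g+2}],[x],[y],[z]$ with the last three distinguished. The $2g+2$ ordinary points determine a unique genus-$g$ hyperelliptic curve $E$ branched over them, and the distinguished triple pulls back under $E\to\PP^1$ to three conjugate pairs $x,\iota x$, $y,\iota y$, $z,\iota z$; conversely a genus-$g$ curve with three marked conjugate pairs gives back $2g+2$ branch points and three further points of $\PP^1$, which is $(1)\Leftrightarrow(3)$. Proposition \ref{prop:permute_Wpoints} matches the equivalence relations, since every automorphism of $E$ descends to an automorphism of $\PP^1$ permuting the branch points and fixing the triple, and conversely such a projective transformation lifts. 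The equivalence $(1)\Leftrightarrow(4)$ is pure gluing: from the points one forms the three genus-$(g+1)$ hyperelliptic curves whose branch loci are $\{[w_1],\dots,[w_{2g+2}]\}$ together with two of the three distinguished points, arranged so that each distinguished point is omitted by exactly one curve, hence shared by exactly two. This is precisely the distribution of the extra Weierstrass points recorded for the quotients $H_{\al,\beta}$ in the computation preceding Lemma \ref{descrip2tor}; taking the union of the branch loci of three such curves recovers the $2g+5$ points and the triple canonically.

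The substance is $(2)\Leftrightarrow(3)$. The direction $(3)\Rightarrow(2)$ is the explicit tower $\tC\to T_x\to E$ of \S5.2: both intermediate double covers are branched in four points and defined by the hyperelliptic line bundle with the prescribed conjugate pairs as branch locus, so Proposition \ref{hyp-cov} makes each of $T_x$ and $\tC$ hyperelliptic, and $\tC$ has genus $4g+3$; the deck involutions of $T_x\to E$ and $\tC\to T_x$ assemble into two commuting involutions on $\tC$. For $(2)\Rightarrow(3)$, given $\tC$ with two commuting non-hyperelliptic involutions $\s,\t$, these generate together with $\iota$ a group $\ZZ_2^3\subset\Aut(\tC)$; since $g(\tC)=4g+3$, Proposition \ref{prop:cominv} produces a \emph{unique} Klein subgroup of involutions with fixed points, so $E:=\tC/\langle\iota\s,\iota\t\rangle$ is the canonically determined genus-$g$ quotient. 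The three conjugate pairs are then read off as the images in $E$ of the fixed loci of the fixed-point involutions $\s\t,\iota\s,\iota\t$ — each such locus being a single $\ZZ_2^3$-orbit of four points that descends to one conjugate pair on $E$ — which recovers the datum of (3). Functoriality of all these quotients identifies an isomorphism of the covering $\tC$ with an isomorphism of $(E;x,\iota x,y,\iota y,z,\iota z)$.

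I expect the main obstacle to be exactly the canonicity in $(2)\Rightarrow(3)$: one must verify that $E$ and the unordered triple of conjugate pairs are intrinsic to $\tC$ and independent of the chosen pair of generators $\s,\t$, which is what the uniqueness clause of Proposition \ref{prop:cominv} supplies, together with the disjointness of the fixed loci of $\s\t,\iota\s,\iota\t$ (a common fixed point would be fixed by the whole fixed-point Klein group). One must also dispose of the boundary case $g=1$, where $E$ is elliptic and ``conjugate pair'' is to be read through the chosen degree-two map to $\PP^1$ under the convention fixed in \S2.
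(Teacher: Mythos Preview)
Your proposal is correct and follows essentially the same approach as the paper: the equivalence $(1)\Leftrightarrow(3)$ via the hyperelliptic covering, and $(2)\Leftrightarrow(3)$ via the top-down quotient of \S5.1 together with the bottom-up tower $\tC\to T_x\to E$ of \S5.2, are exactly what the paper invokes. The only minor routing difference is that the paper reaches $(4)$ by taking the three quotient curves $H_{\alpha,\beta}$ from $\tC$ (i.e.\ $(2)\Rightarrow(4)$, then $(4)\Rightarrow(1)$ obvious), whereas you build the three curves directly from the point configuration $(1)\Leftrightarrow(4)$; this is the same content read from opposite ends of Diagram~\eqref{diag5}. Your version is more explicit about canonicity in $(2)\Rightarrow(3)$ via the uniqueness clause of Proposition~\ref{prop:cominv}, which the paper leaves implicit.
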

\begin{proof}
The equivalence $1\iff3$ is given by the hyperelliptic covering. The equivalence $2\iff 3$ follow from the construction of branched $\ZZ^2_2$ coverings seen from top and from bottom.
The implication $2\Rightarrow 4$ comes from taking 3 quotient curves by 3 Klein subgroups (see Diagram \eqref{diag5}) and $4\Rightarrow 1$ is obvious.
\end{proof}

\begin{figure}[h!] \label{fig:branched_hyp_covers}
\includegraphics[width=11.5cm,  trim={0cm 7cm 0cm 5cm}, clip]{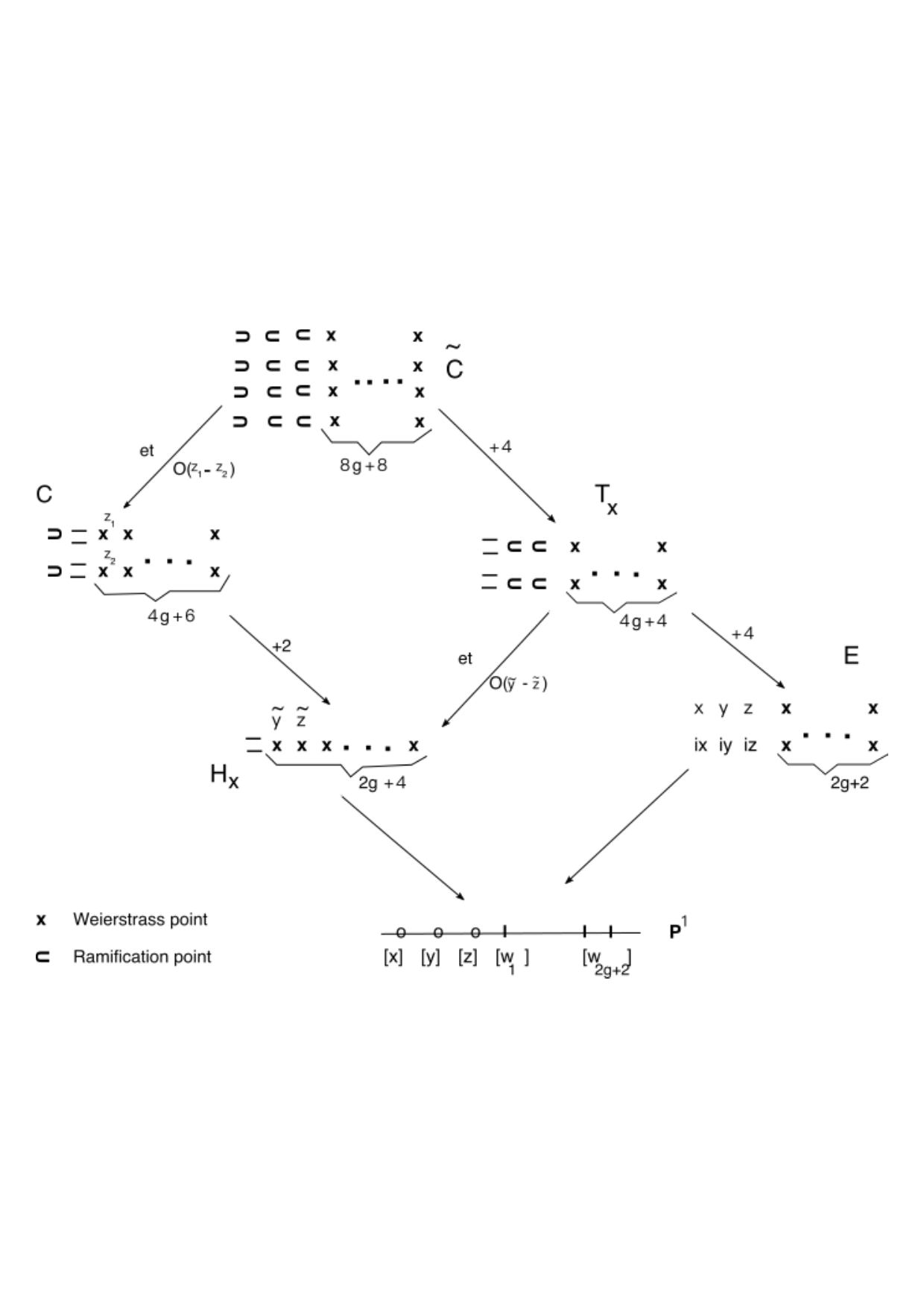}
  \caption{ Weierstrass and ramification points on hyperelliptic branched coverings}
\end{figure}

\subsection{Prym map}
The aim of this section is to show that the Prym map for hyperelliptic Klein coverings branched in 12 points is injective.
In our construction the pullback of $E$ under the 4:1 map $\tC \ra E$ defines a  subvariety with polarisation of type $(4, \ldots, 4)$ therefore, as a complementary subvariety of $E$ in $J\tC$, the Prym variety $P$ corresponding to the map $\tC \ra E$,
has a  polarisation $\Xi$ of type 
$$
\delta:= (\underbrace{1,\ldots,1}_{2g+3}, \underbrace{4, \ldots, 4}_{g}).$$
Let $\cRH_{g,12}$ denote the moduli space of pairs $(E, \{q_1, q_2, q_3\})$ where $E$ is a hyperelliptic curve and
the points $\{q_1, q_2, q_3\} \subset E $ are not pairwise conjugated. In view of Proposition \ref{equivalence} this moduli space parametrises also the hyperelliptic $\ZZ_2^2$-
coverings branched in 12 points.  
\begin{prop}
The moduli space $\cRH_{g,12}$ is irreducible.
\end{prop}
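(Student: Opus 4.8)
The plan is to reduce the statement to the irreducibility of a configuration space of points on $\PP^1$, exactly as was done for $\cRH_{g,0}$ earlier in the paper. By Proposition \ref{equivalence}, a point of $\cRH_{g,12}$ corresponds, via the equivalence $(1)\Leftrightarrow(3)$, to a configuration of $2g+5$ pairwise distinct points in $\PP^1$ together with a choice of three distinguished points among them, taken up to projective equivalence: the three distinguished points are the images $[x],[y],[z]$ of the marked pairs $q_i,\iota q_i\subset E$, and the remaining $2g+2$ points are the images of the Weierstrass points of $E$. Hence it suffices to prove that this configuration space is irreducible.

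First I would make the configuration space explicit. Consider
\[
U=\{\,([x],[y],[z],[w_1],\ldots,[w_{2g+2}])\in(\PP^1)^{2g+5}\ :\ \text{the entries are pairwise distinct}\,\},
\]
the complement in $(\PP^1)^{2g+5}$ of the big diagonal. As a finite product of copies of the irreducible variety $\PP^1$, the space $(\PP^1)^{2g+5}$ is irreducible, and $U$ is a nonempty Zariski-open subset, hence itself irreducible.

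Next I would pass to the quotient realising $\cRH_{g,12}$. The group $\Aut(\PP^1)\cong\mathrm{PGL}_2$ acts diagonally on $U$, while the finite group $S_3\times S_{2g+2}$ acts by permuting the distinguished triple and, separately, the remaining $2g+2$ points; the relevant configuration space is the quotient of $U$ by the combined action. Since $\mathrm{PGL}_2$ is irreducible and the symmetric factors are finite, the quotient map is a surjective (dominant) morphism with irreducible source, so its image is irreducible. Identifying this image with $\cRH_{g,12}$ through Proposition \ref{equivalence} yields the claim.

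The argument involves no genuine obstacle; the only point deserving care is to confirm that the bijection of Proposition \ref{equivalence} is algebraic, so that irreducibility transfers from the configuration quotient to $\cRH_{g,12}$. This is immediate from the bottom-up construction of Section 5.2, where every curve in the tower over $\tC\to E$ is built functorially from the marked points of $\PP^1$ (via Diagram \eqref{Const_tildeC} and Proposition \ref{hyp-cov}), so that the correspondence is a morphism of the respective moduli functors.
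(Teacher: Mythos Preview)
Your proof is correct and follows exactly the paper's approach: the paper's own proof is the single sentence ``It follows from the equivalence $(1)\Leftrightarrow(3)$ of the Proposition \ref{equivalence}'', and you have simply unpacked that sentence by exhibiting the irreducible configuration space explicitly and quotienting.
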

\begin{proof}
It follows from the equivalence $(1) \Leftrightarrow (3)$ of the Proposition \ref{equivalence}.
\end{proof}

We consider the Prym map 
$$
Pr^H_{4g+3,g}: \cRH_{g,12} \ra \cA_{3g+3}^{\delta}, \qquad (E, \{q_1, q_2, q_3\}) \mapsto (P(\tC/ E) , \Xi).
$$
Observe that the image of this Prym map is contained in an irreducible component of $\cA_{3g+3}^{\delta}$ whose 
elements admit a $\ZZ_2 \times \ZZ_2 $ automorphism subgroup acting on them and leaving invariant the 
algebraic class of the polarisation $\Xi$. 


Recall that the Prym variety of $\tC\ra E$ decomposes as $P(\tC/E)=JH^*_{\t,\iota\s}\boxplus JH^*_{\s,\iota\t}\boxplus JH^*_{\s,\s\t}$.
\begin{lem}\label{lem58}
We have the equality as subgroups of $J\tC$
\begin{eqnarray*}
JH^*_{\t,\iota\s}\cap JH^*_{\s,\iota\t}\cap JH^*_{\s,\s\t} &=& JE[2]+\left<w_i+\t w_i-\s w_i-\s\t w_i: \ 1\leq i\leq 2g+2\right> \\ &=& 
JE[2]+\ZZ_2(w_1+\t w_1-\s w_1-\s\t w_1).
\end{eqnarray*}
In particular, the intersection is of order $2^{2g+1}$. 
\end{lem}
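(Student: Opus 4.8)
The plan is to reduce the statement to an intersection of $2$-torsion subgroups and then compute that intersection explicitly from the generators listed in Lemma \ref{descrip2tor}, exploiting the combinatorics of the four Weierstrass points of $\tC$ lying over each Weierstrass point of $E$. First I would check that the intersection consists of $2$-torsion points, so that the whole computation takes place inside $J\tC[2]$. Any $x\in JH^*_{\s,\s\t}=\im(1+\s+\t+\s\t)$ satisfies $\s x=x$, since $\s$ permutes the summands $1,\s,\t,\s\t$; likewise any $x\in JH^*_{\t,\iota\s}=\im(1-\s+\t-\s\t)$ satisfies $\s x=-x$. Hence an $x$ in the triple intersection obeys $x=\s x=-x$, so $2x=0$, and the intersection of the three abelian subvarieties equals the intersection of their $2$-torsion subgroups.

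Next I would identify the claimed answer as a common \emph{linear} subgroup $N:=JE[2]+\ZZ_2(w_1+\t w_1-\s w_1-\s\t w_1)$. By Lemma \ref{2torsionlemma}(1) a point $P_S$ depends only on its support $S$, so the three ways of splitting the fibre $\{w_1,\s w_1,\t w_1,\s\t w_1\}$ into two pairs all define the same point; consequently $w_1+\t w_1-\s w_1-\s\t w_1$ coincides with the single linear generator occurring in each of $JH^*_{\s,\s\t}[2]$, $JH^*_{\t,\iota\s}[2]$, $JH^*_{\s,\iota\t}[2]$, whence $N$ lies in all three. The same lemma yields $(w_i+\t w_i-\s w_i-\s\t w_i)-(w_j+\t w_j-\s w_j-\s\t w_j)=e_i-e_j\in JE[2]$, which proves the second displayed equality $JE[2]+\langle\cdots\rangle=N$; and since $|JE[2]|=2^{2g}$ while a single fibre is not a union of an even number of fibres, $|N|=2^{2g+1}$.

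The substance of the proof, and the step I expect to be the main obstacle, is the reverse inclusion. Reading off Lemma \ref{descrip2tor}, each of the three subgroups is $N$ enlarged by \emph{quadratic} generators that select one half of every fibre: the $\langle\s\t\rangle$-cosets $\{w_i,\s\t w_i\},\{\s w_i,\t w_i\}$ for $JH^*_{\s,\s\t}$, the $\langle\s\rangle$-cosets for $JH^*_{\t,\iota\s}$, and the $\langle\t\rangle$-cosets for $JH^*_{\s,\iota\t}$. Any two quadratic generators of a fixed subgroup differ by a union of full fibres, hence by an element of $N$, so each subgroup equals $N\sqcup(N+Q)$ and has order $2^{2g+2}$. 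As $N\subseteq\text{intersection}\subseteq JH^*_{\s,\s\t}[2]$ with $[JH^*_{\s,\s\t}[2]:N]=2$, the triple intersection is either $N$ or all of $JH^*_{\s,\s\t}[2]$, and to exclude the latter it suffices to produce one element of $JH^*_{\s,\s\t}[2]\setminus JH^*_{\t,\iota\s}[2]$.

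To finish I would take a quadratic representative $Q^{\s\t}$ of $JH^*_{\s,\s\t}[2]$, whose support meets each fibre in a $\langle\s\t\rangle$-coset, and compare it with the structure of $JH^*_{\t,\iota\s}[2]=N\sqcup(N+Q^{\s})$, every element of which meets each fibre in $\emptyset$, the full fibre, or a $\langle\s\rangle$-coset. Because the three index-$2$ subgroups $\langle\s\t\rangle,\langle\s\rangle,\langle\t\rangle$ of $\langle\s,\t\rangle$ are pairwise distinct, a $\langle\s\t\rangle$-coset is none of these, so $Q^{\s\t}\notin JH^*_{\t,\iota\s}[2]$. This forces the triple intersection to be exactly $N$, of order $2^{2g+1}$. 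The delicate point throughout is the faithful translation of the generators of Lemma \ref{descrip2tor} into this per-fibre coset language and the verification that the three coset decompositions of a fibre are mutually incompatible; the rest is bookkeeping with Lemma \ref{2torsionlemma}.
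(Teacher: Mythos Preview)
Your proof is correct and follows essentially the same strategy as the paper: both arguments use the generators of Lemma \ref{descrip2tor}, verify via the sign-independence of Lemma \ref{2torsionlemma}(1) that the three ``linear'' families all collapse to the single subgroup $N$, and then exclude the quadratic generators from the triple intersection.

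There are two small differences worth noting. First, you include an explicit reduction to $2$-torsion (via $\sigma x=x$ on $JH^*_{\s,\s\t}$ and $\sigma x=-x$ on $JH^*_{\t,\iota\s}$), whereas the paper simply works inside the $2$-torsion descriptions of Lemma \ref{descrip2tor} without justifying that the full intersection lies there; your argument fills this gap. Second, your exclusion step is phrased in a per-fibre coset language and reduces to the single observation that the index-$2$ subgroups $\langle\s\rangle,\langle\t\rangle,\langle\s\t\rangle$ of $\langle\s,\t\rangle$ are pairwise distinct, together with the index-$2$ dichotomy $[JH^*_{\s,\s\t}[2]:N]=2$; the paper instead compares the $Q$-sums directly, arguing that two such sums with $2g+2$ summands are linearly equivalent only when all summands agree or all are complementary, and that different involutions force some but not all to agree. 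These are the same idea, but your formulation is cleaner and makes the role of complementation ($P_S=P_{S^c}$) transparent.
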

\begin{proof}
According to Lemma \ref{2torsionlemma} (1) and Lemma \ref{descrip2tor}, we have the inclusion from the right hand side of the first equality. To prove the equality one uses the 
description of Lemma \ref{descrip2tor} and check the elements 
of the form $\sum_{k\leq g+1}(Q_{2k}-Q_{2k-1})$ can not be contained in the intersection. Indeed, there are precisely $2g+2$ summands, so two such divisors are linearly equivalent if and only if all summands coincide or the summands are complementary. Since the involutions involved are different, for different curves some summands (but not all) will be the same.
The second equality follows from the fact that one can write 
\begin{eqnarray*}
(w_i+\t w_i+ \s w_i + \s\t w_i) -  (w_1+\t w_1+ \s w_1 + \s\t w_1) + && \\
w_1+\t w_1-\s w_1-\s\t w_1 &=&  (w_i+\t w_i+ \s w_i + \s\t w_i) -2\s w_1 -2\s\t w_1  \\ &\sim&
(w_i+\t w_i+ \s w_i + \s\t w_i) -2\s w_i -2\s\t w_i \\
&=& (w_i+\t w_i - \s w_i - \s\t w_i).
\end{eqnarray*}
\end{proof}
\begin{rem}
Note that for $Q_i\in\{w_i+\s w_i,\t w_i+\s\t w_i\}, R_i\in\{w_i+\t w_i,\s w_i+\s\t w_i\}$ we have
$$\sum_{k\leq g+1}(Q_{2k}-Q_{2k-1})+\sum_{k\leq g+1}(R_{2k}-R_{2k-1})\in JH^*_{\s,\s\t}.$$
This relation gives many of the elements in the kernel of the addition map $JH^*_{\t,\iota\s}\times JH^*_{\s,\iota\t}\times JH^*_{\s,\s\t}\to P(\tC/E)$.
\end{rem}

Now, we would like to use a modified version of \cite[Prop 3.1]{NOS}. 
\begin{prop}\label{newprop}
\label{group}
    Let $(P, \Xi)$ be an element of $\im(Pr^H_{4g+3,g})$. Then the group of automorphisms $$\{ \gamma \in \Aut(P,\Xi) \: | \: \: \gamma(x) = x, \:\: \forall x \in K(\Xi)\}$$
    is isomorphic to $\langle \iota\sigma, \iota\tau \rangle \simeq \mathbb{Z}_2 \times \mathbb{Z}_2$
\end{prop}

\begin{proof}
    Denote by $f: \tC \to E$ a hyperelliptic Klein covering with $P(\tC/E) = P(f)= P$ and Galois group $\langle \iota\sigma, \iota\tau \rangle \simeq \mathbb{Z}_2 \times \mathbb{Z}_2$. Note that $K(\Xi) = f^*JE \cap P(f) \subset \Fix(\tau) \cap \Fix(\sigma)$. Thus, there is an automorphism $\Tilde{\gamma}: J\tC \to J\tC$ such that the following diagram commutes:

\begin{equation} \label{55}
\xymatrix@R=.9cm@C=.9cm{
0\ar[r]^(.7){}&K(\Xi)\ar[r]^(.7){}\ar[d]_{=}&f^*JE \times P  \ar[r]^(.7){\mu} \ar[d]_{(id,\gamma)} & J\tC 
\ar[d]^{\Tilde\gamma} \ar[r]^(.7){}&0\\
0\ar[r]^(.7){}&K(\Xi)\ar[r]^(.7){}&f^*JE \times P    \ar[r]^-{\mu}&  J\tC   \ar[r]^(.7){}&0
}
\end{equation}
where $\mu$ is the addition map. Since $\gamma$ is a polarised isomorphism, we get from Diagram \ref{55} that $\mu^*\Tilde{\gamma}^*\mathcal{O}_{J\tC}(\Tilde{\Theta})$ and $ \mu^*\mathcal{O}_{J\tC}(\Tilde{\Theta})$ are equal as polarisations.

Since $\mu^*$ has a finite kernel, we have that 
$\Tilde{\gamma}^*\mathcal{O}_{J\tC}(\Tilde{\Theta}) \otimes \mathcal{O}_{J\tC}(\Tilde{\Theta})^{-1}$ is a torsion sheaf, hence it belongs to $\Pic^0(J\tC)$. Therefore, $\Tilde{\gamma}^*\mathcal{O}_{J\tC}(\Tilde{\Theta})$ induces the canonical principal polarisation on $J\tC$ and  $\tg$ is a polarised isomorphism. 
Since $\tC$ is hyperelliptic, by the strong Torelli Theorem \cite[Ex 11.12.19]{bl}, there is an automorphism $\tg_{\tC}$ of $\tC$ inducing $\tg$. 

Now, since $f$ is branched,  $f^*$ is an embedding of $JE$ in $J\tC$ and by construction $\tg_{|JE}=id$. Hence 
$\tg_{\tC}$ is a lift of $id_{E}$, so it lies in the group of deck transformations of the covering $f$ that is equal to $\langle \iota\sigma, \iota\tau \rangle$. This shows that $\gamma \mapsto \tg_{\tC}$ gives a desired isomorphism, because the inverse map is just the restriction of a deck transformation of $J\tC$ to $P$.
\end{proof}
\begin{rem}
    Note that $K(\Xi)$ contains 4-torsion points, so $-1$ does not belong to the group fixing $K(\Xi)$. Our result is stronger than \cite[Prop 3.1]{NOS}  because we have shown the isomorphism for all Prym varieties and not only for the general ones. Here have we used the fact that the top curve is hyperelliptic and the covering is branched.
\end{rem}
\begin{cor}\label{newcor}
    Note that $P=JH^*_{\t,\iota\s}\boxplus JH^*_{\s,\iota\t}\boxplus JH^*_{\s,\s\t}$ is the isotypical decomposition for the group defined in Proposition \ref{newprop}.
\end{cor}

Now, we are ready to prove the main result of this section.
\begin{thm} \label{brached_inj_Prym}
For $g>0$, the Prym map $Pr^H_{4g+3,g}$ is injective.
\end{thm}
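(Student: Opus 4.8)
The plan is to construct an explicit inverse of $Pr^H_{4g+3,g}$, in the spirit of Theorems \ref{injectiveKlein} and \ref{mixed_Prym}. Starting from a polarised abelian variety $(P,\Xi)$ of type $\delta$ in the image, by Proposition \ref{equivalence} it is enough to recover, using only the isomorphism class of $(P,\Xi)$, the three genus $g+1$ hyperelliptic curves $H_x, H_y, H_z$ of Diagram \eqref{diag5} together with the way their branch points glue to a configuration of $2g+5$ points in $\PP^1$ with a distinguished triple. The first step is to locate inside $P$ the three abelian subvarieties $JH^*_{\t,\iota\s}, JH^*_{\s,\iota\t}, JH^*_{\s,\s\t}$, which have dimension $g+1$ and restricted polarisation of type $(2,4,\dots,4)$; by the decomposition $P=JH^*_{\t,\iota\s}\boxplus JH^*_{\s,\iota\t}\boxplus JH^*_{\s,\s\t}$ these carry all the data. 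For a very general $E$ this is automatic from Lemma \ref{generic}, which shows these are the only proper abelian subvarieties of $P$ apart from their sums, so the three subvarieties are intrinsic to $(P,\Xi)$.

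From each subvariety I would then recover a curve with its gluing datum. Each $JH^*_{\al,\beta}$ is isogenous to the principally polarised Jacobian $JH_{\al,\beta}$ of a genus $g+1$ hyperelliptic curve via the $2$-torsion point defining the \'etale cover $T_\beta\to H_{\al,\beta}$ of Diagram \eqref{diag6}; recovering that principally polarised Jacobian, as singled out by the restricted polarisation, and applying the Torelli theorem returns $H_{\al,\beta}$ together with that distinguished $2$-torsion point. It remains to match the Weierstrass points across the three curves, for which I would reuse the pullback maps $\alpha_j=k\circ\pi_j^*$ and the argument of Proposition \ref{gluing}, exactly as in Theorem \ref{mixed_Prym}: their common image lies in $JH^*_{\t,\iota\s}\cap JH^*_{\s,\iota\t}\cap JH^*_{\s,\s\t}$, which by Lemma \ref{lem58} has order $2^{2g+1}$ and consists precisely of the images of the $2g+2$ shared Weierstrass points. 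After a suitable automorphism of $\PP^1$ this identifies these $2g+2$ points as a common part of the three branch loci, and the three remaining branch points are the distinguished triple $[x],[y],[z]$, each shared by exactly two of the curves in the pattern dictated by Proposition \ref{equivalence}(4). Applying the equivalence $(4)\Leftrightarrow(1)$ of Proposition \ref{equivalence} then reconstructs $\tC\to E$, giving injectivity over the very general locus.

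The main obstacle is upgrading this generic statement to injectivity everywhere, since for special $E$ Lemma \ref{generic} can fail and $P$ may contain extra abelian subvarieties of dimension $g+1$ and type $(2,4,\dots,4)$, so the three distinguished subvarieties are no longer pinned down by dimension and polarisation alone. Here I would invoke Lemma \ref{finite}: the Prym map is finite over its image, so each fibre contains only finitely many coverings, and the reconstruction above shows $Pr^H_{4g+3,g}$ is generically one-to-one, hence finite and birational onto its image between irreducible varieties. To force every fibre to be a single point I would check that among the finitely many candidate triples of subvarieties, only one admits the additional structure required to come from a Klein covering, namely a compatible system of $2$-torsion gluing data whose common image realises the intersection of Lemma \ref{lem58} and whose associated $\ZZ_2\times\ZZ_2$-idempotents $1\pm\s\mp\t\mp\s\t$ reconstruct $(P,\Xi)$. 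Since the reconstruction recipe of the first two paragraphs depends only on $(P,\Xi)$ once this correct triple is selected, distinct coverings cannot share a Prym, and injectivity follows.
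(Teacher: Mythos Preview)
Your outline follows the paper's argument closely: both proofs reduce to the generic case via Lemma~\ref{finite}, then for a general $P$ use Lemma~\ref{generic} to identify the three subvarieties $A,B,C$, recover the three curves $H_x,H_y,H_z$, and match their Weierstrass points via the maps $\alpha_j=k\circ\pi_j^*$ to rebuild the configuration of $2g+5$ points with a distinguished triple.

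There are two places where your version is less precise than the paper's. First, you say the principally polarised $JH_{\al,\beta}$ is ``singled out by the restricted polarisation'' on $JH^*_{\al,\beta}$. A polarisation of type $(2,4,\dots,4)$ does not by itself pick out a canonical principally polarised quotient, since there are many maximal isotropic subgroups. The paper instead takes $G=A\cap B\cap C$, which by Lemma~\ref{lem58} has order $2^{2g+1}$ and consists of $2$-torsion; then $A/G$ is the desired $JH_x$, and the residual degree-$2$ isogeny $JH_x\to A$ is given by the distinguished $2$-torsion point $w^{H_x}_{2g+3}-w^{H_x}_{2g+4}$. You already invoke this intersection for the gluing step, so you have the right object in hand---you just need to use it one step earlier, to define the quotient, rather than appealing vaguely to the polarisation.

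Second, your last paragraph is where a genuine gap appears. Once you have generic injectivity and finiteness (Lemma~\ref{finite}), the paper simply stops: a finite morphism between irreducible varieties which is generically one-to-one is injective, so no further work is needed. Your proposed extra check---that among the finitely many candidate triples only one ``admits the additional structure required to come from a Klein covering''---is neither carried out nor necessary, and as stated it is circular (you would be assuming you know which triples arise from coverings in order to decide which one does). Drop that paragraph and invoke the standard reduction directly.
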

\begin{proof}
Let $P\in\im(Pr^H_{4g+3,g})$. By Proposition \ref{newprop} we can construct the Klein four-group acting on $P$ and we can perform the isotypical decomposition to obtain three abelian subvarieties uniquely determined by the action of Klein group on $P$, called $A,B,C$ (see Corollary \ref{newcor}). Let $G=A\cap B\cap C$ and note that by Lemma \ref{lem58} the cardinality of $G$ is $2^{2g+1}$. 

Set $A/G=:JH_x$, $B/G=:JH_y$, $C/G=:JH_z$. Since $G$ contains only 2-torsions, we can extend the quotient map to the map $A\ra A/G\ra A$ such that the composition is multiplication by $2$. Moreover, since $G$ is order $2^{2g+1}$ we get that $A/G=JH_x\ra A$ is of order 2, hence given by a $2$ torsion of the form $w^{H_x}_{2g+3}-w^{H_x}_{2g+4}\in \ker JH_x\ra A$.
We denote the remaining Weierstrass points of $H$ by $w^{H_x}_1,\ldots,w^{H_x}_{2g+2}$. By taking the images under the hyperelliptic covering, we get the points $[w_1],\ldots,[w_{2g+2}]\in\PP^1$.

Similarly to the \'etale case, for $j\in\{x,y,z\}$, we can define 
$\alpha_j=k\circ\pi^*_j:\Pic^1(H_j)\ra J\tC$, (where $k(D)=D-2g^1_2$) although
note that $\alpha_j$ is of degree 2, since $\pi^*(w_{2g+3})=\pi^*(w_{2g+4})=2g^1_2$.
However it is still true that $\alpha_j(w_p)\neq\alpha_j(w_{q})$ for $p\neq q<2g+3$.
Therefore, we can renumber the Weierstrass points of $H_y$ in such a way that $\alpha_x(w^{H_x}_i)=\alpha_y(w^{H_y}_i)$ for $i=1,\ldots,2g+2$.

This compatibility allows us to show that having the hyperelliptic covering of $H_y$ there exists an automorphism of $\PP^1$ such that images of the Weierstrass points  coincide, i.e. $[w^{H_y}_i]=[w^{H_x}_i]$, $i=1,\ldots,2g+2$. Since $g\geq 1$, this automorphism is unique. 
Moreover, by construction, we get that $[w^{H_x}_{2g+3}]=[w^{H_y}_{2g+3}]=:[z]$ and  $[w^{H_x}_{2g+4}]=[y]$, $[w^{H_y}_{2g+4}]=[x]$ are distinct.

We can perform a similar argument for $H_z$ to get the unique automorphism of $\PP^1$ such that the images of Weierstrass points of $H_z$ become $\{[w^{H_x}_{1}],\ldots,[w^{H_x}_{2g+2}], [x], [y]\}$.

Note that, although in the construction we have used $\alpha_j$ that are a priori defined for a chosen $\tC$, in fact we only need the equality of images of the Weierstrass points that lie in $P$, so the construction is intrinsic.
In this way, we have constructed a unique set of $2g+5$ points of $\PP^1$ with a distinguished triple (up to projective equivalence). This proves the injectivity of $Pr^H_{4g+3,g}$.
\end{proof}
\begin{rem}
Note that, unlike the \'etale case, a simple computation of degrees  shows that $P\ra P/G$ cannot be a polarised isogeny, so we need to divide each subvariety individually.
\end{rem}

\subsection{The mixed case of $4g+3$}
From up to bottom perspective, this case occurs when one starts with a genus $4g+3$ curve with two fixed point free involutions $\s,\t$ (with $\s\t$ having 4 fixed points) and takes a group generated by $\langle \s,\t\rangle$.
The tower of curves can be found in Diagram \ref{diag5} when we treat $H_{\s,\s\t}$ as the base curve.
From what we have already described, one can easily deduce that in this case, the Prym variety $P(\tC/H_{\s,\s\t})=JE\oplus JH^*_{\t,\iota\s}\oplus JH^*_{\s,\iota\t}$. Since most of the computations have already been done, we will focus on stating the results and main steps.

For $\delta=(\underbrace{1,\ldots,1}_{2g+1},\underbrace{2,4,\ldots,4}_{g+1})$, define the Prym map:
$$Pr ^H_{4g+3,g+1}:\cRH_{g+1,4}\lra\cA^\delta_{3g+2}.$$
\begin{thm}\label{mixed4g+3}
For $g>0$, the Prym map $Pr^H_{4g+3,g+1}$ is injective.
\end{thm}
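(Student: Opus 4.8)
The plan is to combine the reconstruction used in the branched $12$-point case (Theorem \ref{brached_inj_Prym}) with the mixed-dimension bookkeeping of the étale mixed case (Theorem \ref{mixed_Prym}). Here $P=P(\tC/H_{\s,\s\t})=JE\boxplus JH^*_{\t,\iota\s}\boxplus JH^*_{\s,\iota\t}$, where $JE$ is embedded of dimension $g$ while $JH^*_{\t,\iota\s}$ and $JH^*_{\s,\iota\t}$ are of dimension $g+1$ and restricted polarisation type $(2,4,\ldots,4)$. First I would record the analogues of Lemmas \ref{finite} and \ref{generic}: the argument of Lemma \ref{generic} applies verbatim, since it only uses the simplicity of the Jacobian factors and the vanishing of the $\Hom$-groups between them for a very general base, to show that the only proper abelian subvarieties of a general $P$ are $JE$, $JH^*_{\t,\iota\s}$, $JH^*_{\s,\iota\t}$ and their sums. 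Finiteness of $Pr^H_{4g+3,g+1}$ over its image then follows as in Lemma \ref{finite}, so it suffices to prove generic injectivity.

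Given a general $(P,\Xi)$ in the image, I would first isolate the three building blocks and distinguish the special summand $A:=JE$ of dimension $g$ from the two summands $B,C$ of dimension $g+1$ purely by dimension. As in the remark following Theorem \ref{brached_inj_Prym}, a degree count shows that $P\to P/G$ cannot be a polarised isogeny, so I would divide $B$ and $C$ individually: computing the relevant $2$-torsion intersection exactly as in Lemma \ref{lem58}, each of $B,C$ is quotiented by the appropriate isotropic subgroup to produce principally polarised genus $g+1$ Jacobians $JH_y$ and $JH_z$, the dual quotient in each case being an order-$2$ isogeny that pins down an extra pair of Weierstrass points. The factor $A=JE$ is already a principally polarised genus $g$ Jacobian and requires no division.

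The gluing then proceeds as before. Using the pullback maps $\alpha_j=k\circ\pi_j^*$ attached to the tower of Diagram \eqref{diag5} — injective on the $JE$-factor and of degree $2$ on the two $JH^*$-factors — I would match the images of the Weierstrass points inside $P\subset J\tC$. The argument of Proposition \ref{gluing} shows that $JH_y$ and $JH_z$ share $2g+3$ branch points in $\PP^1$; the image of the Weierstrass points of $E$ singles out among these the $2g+2$ points $[w_1],\ldots,[w_{2g+2}]$, leaving the remaining shared point as the distinguished base point $[x]$, and the two unshared points as $[y]$ and $[z]$. Since $g\geq 1$ the gluing automorphism of $\PP^1$ is unique, so this reconstructs a well-defined configuration of $2g+5$ points of $\PP^1$ with a distinguished triple and a marked base point. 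Proposition \ref{equivalence} then recovers the covering $\tC\ra H_{\s,\s\t}$, yielding injectivity.

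The hard part is the mixed bookkeeping of the middle step. Because the three summands no longer share a common dimension and polarisation type — $JE$ being principally polarised of dimension $g$ against two factors of type $(2,4,\ldots,4)$ of dimension $g+1$ — the single clean quotient $P/G$ isomorphic to a product of Jacobians, available in the étale mixed case, is no longer at our disposal, and one must instead track how the lattice $\ker\phi_\Xi\cap P[2]$ (whose order reflects the extra $2$ in $\delta$) distributes over the three factors, establishing the precise analogue of Lemma \ref{lem58} so that $B$ and $C$ are each divided by exactly the right subgroup. The subtle point is to verify that the embedded factor $JE$ produces no spurious coincidences when matching Weierstrass points, so that the shared point singled out by $E$ is genuinely the base point $[x]$ and the reconstructed triple is unambiguous.
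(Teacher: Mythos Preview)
Your proposal is correct and follows essentially the same approach as the paper's proof: reduce to generic injectivity via the analogues of Lemmas \ref{finite} and \ref{generic}, distinguish the dimension-$g$ summand $JE$ from the two dimension-$(g+1)$ summands $B,C$, quotient $B$ and $C$ by $G=B\cap C$ (of order $2^{2g+1}$, computed via Lemma \ref{descrip2tor}) to recover $JH_y,JH_z$ together with the distinguished $2$-torsion points, and then glue the Weierstrass data using the maps $\alpha_j=k\circ\pi_j^*$ (injective on the $E$-factor, degree $2$ on the $H^*$-factors) to reconstruct the $2g+5$ points in $\PP^1$ with the marked triple and base point. The only cosmetic differences are that the paper takes $G=B\cap C$ directly rather than describing it as ``the appropriate isotropic subgroup'', and invokes Lemma \ref{descrip2tor} rather than Proposition \ref{gluing} (which is the \'etale analogue) for the intersection computation.
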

\begin{proof}
Firstly, for $P\in\im(Pr^H_{4g+3,g+1})$ we have a similar result as  Proposition \ref{newprop} in this case, so analogous to  Corollary \ref{newcor} we can distinguish three abelian subvarieties of $P$ appearing in the isotypical decomposition. One of the subvarieties is of dimension $g$ denoted by $JE$ and the other two by $B$ and $C$. 

By Lemma \ref{descrip2tor}, we note that $JE\cap B\cap C=JE[2]$ is of order $2^{2g}$ and $B\cap C$ is of order $2^{2g+1}$.
Denote by $w_1,\ldots,w_{2g+2}$ the Weierstrass points of $E$ and $[w_1],\ldots,[w_{2g+2}]$ their images in $\PP^1$.

Using results from the proof of Theorem \ref{brached_inj_Prym}, by taking $G=B\cap C$, we see that  there exist unique curves $H_{y}, H_{z}$ such that 
$B=JH^*_{y},\ C=JH^*_{z}$ with the quotient maps given by the differences $w^{H_j}_{2g+3}-w^{H_j}_{2g+4}$ for $j=y,z$.

As before, consider the pullback map $\pi_E^*:\Pic^1(E)\ra \Pic^4(\tC)$ and $k:\Pic^4(\tC)\ra J\tC$ given by $k(D)=D-2g^1_2$. Then $k\circ \pi_E^*:\Pic^1(E)\ra J\tC$ is a monomorphism and the map $\alpha_j:=k\circ \pi_{JH_j}^*$ is of degree 2
for $j=y,z$. Note that we can number the Weierstrass points on $H_x,H_y$ using the condition $\alpha_j(w^{H_j}_l)=\alpha_E(w_l)$ for $l=1,\ldots,2g+2$ and by the fact that we are in the Prym locus we  get that, out of four points $w^{H_j}_{2g+3}, w^{H_j}_{2g+4}$ for $j=y,z$, precisely two have the same projection to $\PP^1$ denoted by $[x]$, and we denote the image in $\PP^1$ of the other two by $[y],[z]$ respectively. 
To summarise, starting from the Prym variety $P$, we have constructed $2g+5$ points in $\PP^1$ with a chosen triple $x,y,z$ and a distinguished point $x$ that yields the Prym variety we started with.

In this way, we have proved that the Prym map $Pr^H_{4g+3,g+1}$ has an inverse, hence it is injective.
\end{proof}

\begin{cor}
In the process, we have shown that the following data equivalent.
\begin{itemize}
\item[(1)] a triple $(W,W',B)$ of disjoint sets of points in $\PP^1$ such that $W$ is of cardinality 2g+2, $W'$ is a pair of points and $B$ is a point 
(up to a projective equivalence respecting the sets),
\item[(2)] a hyperelliptic genus $4g+3$ curve with a choice of a Klein subgroup of involutions $\langle \s,\t\rangle$ such that $\s$ and $\t$ are fixed point free and $\s\t$ has 4 fixed points,
\item[(3)] a hyperelliptic genus $g+1$ curve together with a pair of Weierstrass points $y,z$ and a pair of points $x,\iota x$.
\end{itemize}
\end{cor}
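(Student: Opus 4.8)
The plan is to deduce the three-fold equivalence from the top-down/bottom-up dictionary already exploited in the proof of Theorem \ref{mixed4g+3}, together with the hyperelliptic-covering criteria of Propositions \ref{prop:Angela} and \ref{hyp-cov}. Since equivalence of data is transitive, it suffices to establish $(1)\Leftrightarrow(3)$ and $(2)\Leftrightarrow(3)$. Throughout, the genus $g+1$ curve in $(3)$ is the base curve $H':=\tC/\langle\s,\t\rangle$, which is hyperelliptic by Corollary \ref{hypquot}; note that $(2)$ includes the \emph{choice} of the mixed Klein subgroup $\langle\s,\t\rangle$, so $H'$ together with its marked points is canonically attached to the datum in $(2)$.

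First I would treat $(1)\Leftrightarrow(3)$ entirely through the hyperelliptic map $H'\ra\PP^1$. Given $(W,W',B)$, I let $H'$ be the hyperelliptic curve double-covering $\PP^1$ branched along the $2g+4$ points $W\cup W'$; it has genus $g+1$. The pair $W'$ lifts to a pair of Weierstrass points $y,z$, whereas $B=\{[x]\}$, not being a branch point, lifts to a conjugate pair $x,\iota x$, producing $(3)$. Conversely, the hyperelliptic map of a curve as in $(3)$ sends its $2g+4$ Weierstrass points onto $2g+4$ branch points of $\PP^1$; the images of $y,z$ give the distinguished pair $W'$, the remaining $2g+2$ give $W$, and the conjugate pair $x,\iota x$ has a single image $B$. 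These passages are mutually inverse, and projective transformations of $\PP^1$ respecting the three sets correspond to isomorphisms of $H'$ respecting the marked points.

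For $(2)\Rightarrow(3)$ I pass to $H'=\tC/\langle\s,\t\rangle$. Since $\s,\t$ are fixed point free while $\s\t$ has four fixed points, a Riemann--Hurwitz count gives that $C_\s\ra H'$ and $C_\t\ra H'$ are each branched in two points while $T_{\s\t}\ra H'$ is étale. The common branch locus of the first two maps is the image in $H'$ of the four fixed points of $\s\t$; being $\iota$-invariant and disjoint from the Weierstrass points, it is a conjugate pair $x,\iota x$ by Proposition \ref{prop:Angela}, which moreover forces $L_\s:=\cO_{H'}(y)$, $L_\t:=\cO_{H'}(z)$ for two Weierstrass points $y,z$. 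As $T_{\s\t}$ is hyperelliptic, \cite[Prop.~4.2]{bo} identifies the étale cover $T_{\s\t}\ra H'$ with the one defined by $L_{\s\t}:=\cO_{H'}(y-z)$, the compatibility relations for a $\ZZ_2^2$-cover, namely $2L_\s\simeq 2L_\t\simeq\cO_{H'}(x+\iota x)$ and $L_\s\otimes L_\t\simeq L_{\s\t}\otimes\cO_{H'}(x+\iota x)$, pinning the same pair $\{y,z\}$ in all three bundles. This produces the marked datum $(3)$.

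Finally, for $(3)\Rightarrow(2)$ I reconstruct the cover from $(H',\{y,z\},\{x,\iota x\})$ by prescribing the three intermediate double covers via $L_\s=\cO_{H'}(y)$, $L_\t=\cO_{H'}(z)$, $L_{\s\t}=\cO_{H'}(y-z)$. The hard part is checking that these glue to a genuine $\ZZ_2^2$-cover rather than three unrelated double covers: this is exactly the compatibility $L_\s\otimes L_\t\otimes L_{\s\t}^{-1}\simeq\cO_{H'}(x+\iota x)$, which holds because $\cO_{H'}(y-z)$ is $2$-torsion and $\cO_{H'}(2z)\simeq\cO_{H'}(h_{H'})\simeq\cO_{H'}(x+\iota x)$, the last two equalities using that $z$ is a Weierstrass point and $x,\iota x$ are conjugate. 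Propositions \ref{prop:Angela} and \ref{hyp-cov} then guarantee that the total space $\tC$ is hyperelliptic, Riemann--Hurwitz gives $g(\tC)=4g+3$, and the deck group is the required mixed Klein subgroup $\langle\s,\t\rangle$ with $\s,\t$ fixed point free and $\s\t$ ramified at the four points lying over $x,\iota x$, consistent with Proposition \ref{prop:cominv}. Since the three line bundles are forced by the hyperelliptic criteria, this reconstruction is the inverse of $(2)\Rightarrow(3)$, completing the equivalence.
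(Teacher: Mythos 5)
Your proposal is correct and follows essentially the paper's own route: the corollary is precisely the data dictionary that the paper assembles ``in the process'', namely Corollary \ref{hypquot} and Proposition \ref{prop:Angela} applied to the two covers $C_\s,C_\t\ra H'$ branched over the image of $\Fix(\s\t)$, \cite[Prop.~4.2]{bo} and Proposition \ref{hyp-cov} for the \'etale cover $T_{\s\t}\ra H'$ and the reconstruction of $\tC$, together with the tower of Diagram \eqref{diag5} and the hyperelliptic map realizing $(1)\Leftrightarrow(3)$. The only cosmetic differences are that you phrase $(3)\Rightarrow(2)$ via explicit bidouble-cover compatibility relations where the paper builds $\tC$ iteratively as a double cover of $T_{\s\t}$ and lifts the deck involution (\S 5.2), and that the implicit requirement $y\neq z$ is settled by your own forced identification $L_{\s\t}\simeq\cO_{H'}(y-z)$, since connectedness of $T_{\s\t}$ makes this bundle nontrivial.
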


\section{Final remarks}
We assumed $g\geq 2$ in the \'etale case, because $g=1$ gives a trivial Prym and $g=0$ is impossible.
In the branched case, we assumed $g\geq 1$. For $g=0$ one gets that the Prym variety is the whole Jacobian. However, it must be noted that the mixed case is 'non-trivial' and the proof of Theorem \ref{mixed4g+3} does not work for $g=0$. This case has been investigated in a joint paper of the first author with Anatoli Shatsila where they have shown that the Prym map is generically of degree 2, see \cite{BS}.

\begin{rem}
We would like to point out that throughout the paper we used coordinate-free point of view. However, one can also work with equations. It can be checked that a hyperelliptic curve given by $y^2=(x^4+a_1x^2+1)\cdot\ldots\cdot(x^4+a_nx^2+1)$ has two additional commuting involutions given by $(x,y)\mapsto (-x,y)$ and $(x,y)\mapsto(\frac{1}{x},\frac{y}{x^{2n}})$ (see \cite{Sh}). 
This curve is of genus $2n-1$ and since the family depends on $n$ parameters, one can use Propositions \ref{equivalence1} and \ref{equivalence} to show that a hyperelliptic Klein covering can be given by such an equation.
\end{rem}


\end{document}

\pagebreak
\section{OLD section}
\subsection{From the top curve.} Let $\tC$ be a hyperelliptic curve of genus 7 admitting a subgroup of automorphisms
generated by three commuting involutions, namely  $\s, \t $ and the hyperelliptic involution $\iota$. By Proposition \ref{prop:cominv} 
we have 
$$
|\Fix (\t)| = |\Fix (\s)| = |\Fix (\iota\s\t)| = 0, \qquad |\Fix (\s\t)| = |\Fix (\iota\s)| = |\Fix (\iota\t)| = 4.
$$
If  $C_{\alpha}$ denote the quotient curves of $\tC$ of genus 4, with $\alpha \in \{ \s,\t, \iota\s\t \}$
and $T_{\alpha}$ the quotient curves of $\tC$ of genus 3, with $\alpha \in \{ \iota\s,\iota\t, \s\t \}$
we have the following commutative diagrams
\begin{equation} \label{diag55}
\xymatrix@R=.9cm@C=.6cm{
&& \tC \ar[dr]^{et} \ar[dl]_{et} &&  \\
& C_{\t}\ar[dr]_{+2} \ar[dl]_{+2} & & C_{\s}\ar[dr]_{+2} \ar[dl]_{+2} & \\
H_{\iota\s,\t}\ar[drr] && H_{\s,\t} \ar[d] && H_{\iota\t,\s}\ar[dll] \\
&& \PP^1 &&
}
\qquad
\xymatrix@R=.9cm@C=.6cm{
& \tC \ar[dr]^{+4} \ar[dl]_{+4} \ar[d]^{+4} & \\
T_{\s \t}\ar[dr]_{+4} &  T_{\iota \t} \ar[d]_{+4} & T_{\iota\s} \ar[dl]^{+4}\\
&E \ar[d]& \\
&\PP^1&
}
\end{equation}
where $H_{\alpha,\beta}$ is the genus 2 curve quotient of $\tC$ by the subgroup $\langle \alpha, \beta \rangle$ 
and $E$ is the elliptic curve quotient of $\tC$  by $\langle \iota \s, \iota \t \rangle$. Here $+4$ (respectively 
$+2$) denotes  a 2:1 map branched in 4 (resp. 2) points.  By (previous results), all the curves in both diagrams
are hyperelliptic.  By construction the branch locus $B$ of the 8:1 map $\tC \ra \PP^1$ consists of seven points  
and the branch loci of the 2:1 maps to $\PP^1$ are subsets of $B$. Let $B= \{x,y,z,p_1, \ldots, p_4 \} \subset \PP^1$
be the  branch locus such that $E \ra \PP^1$ is branched in $\{ p_1, \ldots, p_4\}$ and we rename the 
genus 2 curves appearing in the diagram by $H_x$, $H_y$ and $H_z$ which are branched in 6 points of $B$ excluding
$x$, $y$ and $z$ respectively. The following Proposition shows that information on the branch locus suffices
to reconstruct the curve $\tC$.

\begin{prop} \label{7points-Prop}
The following data are equivalent:\\
(i) a  hyperelliptic curve  $\tC$ of genus 7 admitting two fixed point-free commutative involutions $\s$, $\t$.\\
(ii) a set of seven points in $\PP^1$ with a distinguished subset of three points.
\end{prop}
\begin{proof}
It remains to show  the implication $(ii) \Rightarrow (i)$. Let $x,y,z \in \PP^1$ be the 3 distinguished points.
One can consider the three genus 2 curves which are branched in 6 of the 7 points excluding one of the distinguished. 
Let $H$ be one of these genus 2 curves, which is not branched at the point $x$. Consider the following diagram of double 
coverings
\begin{equation} \label{diag66}
\xymatrix@R=.7cm@C=.7cm{
& \tC \ar[dr]^{+4} \ar[dl]_{et} & \\
C \ar[dr]_{+2} & & T_{} \ar[dl]^{et}\\
& H &
}
\end{equation}
where $C$ is the genus 4 curve branched in the 2 points in  the preimage of $x$ and $T$ is the genus 3 curve  
According to Lemma (??), $C$ and $T$ are hyperelliptic and moreover they admit an extra involution by construction.
From each of these curves one can construct a hyperelliptic curve $\tC$ as a double covering, \'etale from $C$ 
or branched at 4 points from $T$ (see Lemma ??).  
\\
\\
(A diagram with Weierstrass points to be inserted here )
\\
\\
\end{proof}

Given the data $(\tC, \s, \t)$ as in Proposition \ref{7points-Prop} one has a unique elliptic curve $E$ associated
to it, namely the unique elliptic curve branched in the points $\{p_1, \ldots, p_4\} \subset \PP^1$. 
In order to explain the construction of the $\ZZ_2 \times \ZZ_2$-covering over $E$ from the bottom, we introduce
the following notation.  Let $q_1, q_2, q_3 \in E$ be points over the distinguished points $x,y,z$, respectively;
in particular, they are not pairwise conjugated. We denote by $T_{yz}$ the genus 3 curve, constructed as 
a double covering over $E$ ramified over the 4 points $q_2, q_3, \iota_E q_2, \iota_E q_3$, that is, the preimages
of the points $y,z \in \PP^1$. We define similarly the curves $T_{xy}$ and $T_{xz}$. 
We also denote  by $C_{yz}$ the genus 4 curve which is a double covering of $H_x$ branched in the 2 points 
over $x$.
With this notation  Diagram \eqref{diag5} translates into

\begin{equation} \label{diag7}
\xymatrix@R=.9cm@C=.6cm{
&& \tC \ar[dr]^{et} \ar[dl]_{et} &&  \\
& C_{xy}\ar[dr]_{+2} \ar[dl]_{+2} & & C_{yz}\ar[dr]_{+2} \ar[dl]_{+2} & \\
H_{x}\ar[drr] && H_{y} \ar[d] && H_{z}\ar[dll] \\
&& \PP^1 &&
}
\qquad
\xymatrix@R=.9cm@C=.7cm{
& \tC \ar[dr]^{+4} \ar[dl]_{+4} \ar[d]^{+4} & \\
T_{xy}\ar[dr]_{+4} &  T_{xz} \ar[d]_{+4} & T_{yz} \ar[dl]^{+4}\\
&E & 
}
\end{equation}
The double covering $\tC \ra T_{yz}$ is branched in the 4  preimages of $q_2, q_3, \iota_E q_2, \iota_E q_3$ and
one constructs analogously the other two coverings to $T_{xy}$ and $T_{xz}$. 
It is then natural to study the $\ZZ^2_2$-coverings of elliptic curves branched in 4 distinguished points.
Let $\cRH_{1,br}$ denote the moduli space of pairs $(E, \{q_1, q_2, q_3\})$ where $E$ is an elliptic curve and
the points $\{q_1, q_2, q_3\} \subset E $ are not pairwise conjugated.  So $\cRH_{1,br}$ is an open  subset 
of $\cA_{1,3}$ (or it can be seen as an open subvariety of $\cM_{1,4}$).  
As usual, we define the Prym variety of the 4:1 covering $f: \tC \ra E$ with $f$ for some composition of arrows 
 \eqref{diag7}) by 
$$
P= P(\tC /E) :=\Ker \Nm_f \subset J\tC
$$
where $\Nm_f: J\tC \ra JE \simeq E$ is the norm map. This abelian subvariety of $J\tC$ is uniquely determined 
by $(E, \{q_1, q_2, q_3\})$, that is, it is independent of the choice of the composition maps between 
$\tC$ and $E$ in Diagram $\eqref{diag7}$. 
Moreover, the pullback of $E$ under the 4:1 map $\tC \ra E$ defines a 
subvariety with polarisation of type $(4)$ therefore, as a complementary subvariety of $E$ in $J\tC$, $P$
has a  polarisation $\Xi$ of type $(1,1,1,1,1,4)$.
We consider the Prym map 
$$
Pr_{1,br}: \cRH_{1,br} \ra \cA_{6}^{(1,1,1,1,1,4)}, \qquad (E, \{q_1, q_2, q_3\}) \mapsto P(\tC/ E , \Xi).
$$
In order to analyse the Prym map $Pr_{1,br}$ we first need a concrete description of the the Prym variety 
$P(\tC /E)$.

It is not difficult to check that images  in $J\tC$ of the three hyperelliptic Jacobians $H_x$, $H_y$, $H_z$
under the corresponding pullbacks, are contained in $P$. Let  $JH^*_x$ (respectively $JH^*_y$,  $JH^*_z$  )
the image of the Jacobian $JH_x$ in $J\tC$ (respectively $JH_y$,  $JH_z$).

According to \cite{RR}, the natural isogeny 
$$
\psi: JH^*_x \times JH^*_y \times JH^*_z \ra P 
$$
is of degree $2^3\cdot 2^4 =2^7$ and  the restricted polarisation $\lambda_{\Xi}$ fits in the following 
commutative diagram:
\begin{equation} \label{polarisation}
\xymatrix@R=.9cm@C=.9cm{
JH^*_x \times JH^*_y \times JH^*_z   \ar[r]^(.7){\psi} \ar[d]_(.4){\lambda_{\psi^*\Xi}} & P 
\ar[d]^{2^4 :1}_{\lambda_{\Xi}} \\
\widehat{JH}^*_x \times \widehat{JH}^*_y \times \widehat{JH}^*_z    &  \widehat{P}  \ar[l]_(.2){\hat{\psi}} 
}
\end{equation}

\begin{lem}
The intersection $JH^*_x \cap JH^*_y \cap JH^*_z \subset J\tC$ consists of 4 points given by the image of 
the 2-torsion points $E[2]$ in $J\tC$. Moreover, this is a maximal isotropic subgroup of the kernel 
of the polarisation $\lambda_{\Xi} \subset P[4]$.
\end{lem}

\begin{proof}
First notice that if $a \in JH^*_x \cap JH^*_y \cap JH^*_z$ and $(a,a,a)$ is in the kernel of $\psi $ then $a=0$.
On the other hand,  $JH^*_x \cap JH^*_y \cap JH^*_z \subset \ker \lambda_\Xi = E[4] \cap P[4] $. 

In particular, this intersection lies in the kernel of the  finite group $P[4]$.
We denote $D_i \in \Div^4{\tC}$ the reduced divisors corresponding to the preimages of the points $q_i \in E$
$i=1, \dots, 4$ and $D_x, D_y, D_z \in \Div^4{\tC}$  the reduced divisors corresponding to the preimages of 
the points $x,y,z \in\PP^1$ respectively.
One checks that half the points in $J^*H_x[4] \subset J\tC [4] $  come from the image of primitive 4-torsion
points in $JH_x[4]$ and half are images of 2 torsion points in $JH_x$. More precisely
\begin{eqnarray*}
JH^*_x [2] & = &\{ 0, [D_1 - D_y], [D_2- D_y] [D_3 - D_y], [D_4- D_y],  [D_2 - D_1], [D_3- D_1], [D_4-D_1], \\
& & s , s + [D_1 - D_y], \dots, s+ [D_4-D_1] \}
\end{eqnarray*}
where $s \in JH_x[4]$ is a square root of the 2-torsion point $[w_z-w_y] \in JH[2]$, where $w_y, w_z$ are the 
Weierstrass points over $y, z$ respectively.  Similarly one gets
\begin{eqnarray*}
JH^*_y [2] & = &\{ 0, [D_1 - D_z], [D_2- D_z] [D_3 - D_z], [D_4- D_z],  [D_2 - D_1], [D_3- D_1], [D_4-D_1], \\
& & t , t + [D_1 - D_x], \dots, t + [D_4-D_1] \}
\end{eqnarray*}
with $2t = w_x - w_z\in JH_y[2] $ and 
\begin{eqnarray*}
JH^*_z [2] & = &\{ 0, [D_1 - D_x], [D_2- D_x] [D_3 - D_x], [D_4- D_x],  [D_2 - D_1], [D_3- D_1], [D_4-D_1], \\
& & u , u  + [D_1 - D_x], \dots, u + [D_4-D_1] \}
\end{eqnarray*}
with $2u = w_x - w_y  \in JH_z[2] $.
Clearly, the intersection is the set 
$$
\{0, [D_2 - D_1], [D_3- D_1], [D_4-D_1] \}  \simeq E[2]
$$
which is a distinguished maximal isotropic subgroup in $E[4] \cap P[4] \subset J\tC[4] $ by construction
since 
$$
|\ker \lambda_{\Xi}| = | E[4] \cap P[4]| =2^4 = 4^2. 
$$
 \end{proof}
 
 \begin{thm}
 The Prym map $\Pr_{1,br}: \cRH_{1,br} \ra \cA_{6}^{(1,1,1,1,1,4)}$ is generically injective.
 \end{thm}
 \begin{proof}
 Given a general element $(P, \Xi)$ in the image of $\Pr_{1,br}$ the distinguished maximal isotropic subgroup
 $K \simeq  E[2] $ in $\ker \lambda_{\Xi}$ determines completely the elliptic curve $E$ 
 \end{proof}

\begin{prop}\label{prop:commute}
Let $\s,\t$ be two involutions that do not commute. Then
\begin{itemize}
    \item either $\s\t$ is of odd order $2n+1$ and hence $\s$ and $\t$ are conjugate by $(\s\t)^n$,
    \item or $\s\t$ is of even order $2n$ and then there exist an involution $\phi=(\s\t)^n$ that commutes with $\s$ and $\t$.
\end{itemize}
\end{prop}
\begin{proof}
The first case follows from  \[\id=(\s\t)^{2n+1}=(\s\t)^n\s(\t\s)^n\t=(\s\t)^n\s(\s\t)^{-n}\t.\]
Similarly, the second follows from
\[(\s\t)^{n}=(\t\s)^n, \text{ hence } (\s\t)^n\s=\s(\t\s)^{n}.\]
\end{proof}

\begin{lem}\label{lem:conj}
If $\s$ and $\t$ are conjugated in the automorphism group, then $H_\s\cong H_\t$.
\end{lem}

By Proposition \ref{prop:commute} and Lemma \ref{lem:conj}, we can focus only on {\it commuting involutions}.